\newtheorem{Thm}{Theorem}{\bfseries}{\itshape}
\newtheorem*{Thm*}{Theorem}{\bfseries}{\itshape}
\newtheorem{Cor}{Corollary}{\bfseries}{\itshape}
\newtheorem{Prop}[Cor]{Proposition}{\bfseries}{\itshape}
\newtheorem{Lem}[Cor]{Lemma}{\bfseries}{\itshape}
\newtheorem*{Lem*}{Lemma}{\bfseries}{\itshape}
\newtheorem{Fact}[Cor]{Fact}{\bfseries}{\itshape}
{\bfseries}{\itshape}
\newtheorem{Def}[Cor]{Definition}{\bfseries}{\rmfamily}
{\scshape}{\rmfamily}
\newtheorem{Rem}[Cor]{Remark}{\scshape}{\rmfamily}
\renewcommand\ge{\geqslant} \renewcommand\le{\leqslant}
\let\tildeaccent=\~ \let\hataccent=\^
\renewcommand\~[1]{\widetilde{#1}}
\def\<{\left<} \def\>{\right>} \def\({\left(} \def\){\right)}
\def\abs#1{\left\vert #1 \right\vert} \def\norm#1{\left\Vert #1
  \right\Vert} 
\let\parasymbol=\S \def\secref#1{\parasymbol\ref{#1}}
 \def\pd#1#2{\tfrac{\partial#1}{\partial#2}}
\let\polishL=l \def\Zoladek.{\.Zol\c adek}
 \def\const{\operatorname{const}}
\def\codim{\operatorname{codim}}
 \def\etc.{\emph{etc}.}
 \def\Sing{\operatorname{Sing}}
\def\:{\colon} \def\R{{\mathbb R}} \def\C{{\mathbb C}} \def\Z{{\mathbb
    Z}} \def\N{{\mathbb N}}  
 \def\e{\varepsilon} \def\S{\varSigma}
\def\l{\lambda}   
 \def\d{\,\mathrm d}
 \let\PolishL=\L \def\Lojas.{\PolishL ojasiewicz}
 \def\cL{{\mathcal L}}
\def\cO{{\mathcal O}}
 \def\mult{\operatorname{mult}}
\def\rest#1{{\vert_{#1}}}
\def\clo{\operatorname{Clo}}
\def\vol{\operatorname{Vol}}
\def\supp{\operatorname{supp}}
\def\vell{{\boldsymbol\ell}}
\def\qmi{W^s}
\newcommand{\pv}[1][k]{\Gamma^{#1}_{\vell}}
\newcommand{\tpv}[1][k]{\tilde\Gamma^{#1}_{\vell}}
\newcommand{\bpv}[1][k]{\bar\Gamma^{#1}_{\vell}}
\newcommand{\las}[1][k]{L^{#1}_{\vell}}
\newcommand{\mc}{\mathcal{M}}
\def\ot#1#2{\mathrel{\overset{\text{#2}}{#1}}}
\def\degf{\mathfrak{D}}
\begin{document}

% +Title
\title{Multiplicity estimates: a Morse-theoretic approach}

\author{Gal Binyamini}\address{University of Toronto, Toronto, 
Canada}\email{galbin@gmail.com}
\thanks{The author was supported by the Banting
  Postdoctoral Fellowship and the Rothschild Fellowship}

\begin{abstract}
  The problem of estimating the multiplicity of the zero of a
  polynomial when restricted to the trajectory of a non-singular
  polynomial vector field, at one or several points, has been
  considered by authors in several different fields. The two best
  (incomparable) estimates are due to Gabrielov and Nesterenko.

  In this paper we present a refinement of Gabrielov's method which
  simultaneously improves these two estimates. Moreover, we give a
  geometric description of the multiplicity function in terms of
  certain naturally associated \emph{polar varieties}, giving a
  topological explanation for an asymptotic phenomenon that was
  previously obtained by elimination theoretic methods in the works of
  Brownawell, Masser and Nesterenko. We also give estimates in terms
  of Newton polytopes, strongly generalizing the classical estimates.
\end{abstract}
%% -Title
\maketitle
\date{\today}

\section{Introduction}
\label{sec:intro}

Consider a polynomial vector field $V$ of degree $\delta$ and a polynomial
$P$ of degree $d$ on $\C^n$,
\begin{equation} \label{eq:VP}
  \begin{split}
    V = \sum_{i=1}^n Q_i \pd{}{x_i} \qquad & Q_i\in\C[x_1,\ldots,x_n],\ \deg Q_i\le\delta \\
                                          & P\in\C[x_1,\ldots,x_n],\ \deg P\le d.
  \end{split}
\end{equation}
For $p\in\C^n$ a non-singular point of $V$, let $\gamma_p$ denote the
germ of the trajectory of $V$ through the point $p$. We define
$\mult_p^V P$, the \emph{multiplicity of $P$ at the point $p$}, to be
the order of zero of $P$ along $\gamma_p$. Alternatively, we may think of $V$
as a general non-linear system of differential equations of the time
variable $t$, the variables $x_1,\ldots,x_n$ as the dependent
variables, and $\mult_p^V P$ as the order of zero of $P$ evaluated on
a particular solution of the system.

A multiplicity estimate is an answer for the following question:
\emph{For a given $P$ and $V$, how large can $\mult_p^V P$ be?}. It
is usual for the answer to be given in terms of the dimension $n$ and
the degrees $d,\delta$. More generally, given a finite set of points
$p_1,\ldots,p_\nu$ one may ask for an upper bound for
$\sum_i\mult^V_{p_i}P$ or for $\min_i \mult^V_{p_i} P$, depending on the
geometry of the set.

% Many multiplicity estimates have been established in the literature.
% We give a brief survey in~\secref{sec:history}. The best two are the
% (incomparable) estimates of Nesterenko and Gabrielov. The former is
% asymptotically optimal with respect to the degree $d$ but
% doubly-exponential in the dimension $n$, while the latter is
% sub-optimal in $d$ but singly-exponential in $n$. We discuss these two
% estimates in sections~\secref{sec:nesterenko}
% and~\secref{sec:gabrielov} respectively.

\subsection{Historical sketch}
\label{sec:history}

The problem of deriving multiplicity estimates of the types mentioned
above has been considered in various areas of mathematics. Below we
give a brief outline of some of the main contributions. Our
presentation follows the development in each field separately, rather
than in historical order.

In transcendental number theory, the subject began with Nesterenko's
contributions to the Siegel-Shidlovski theory of
E-functions~\cite{nesterenko:e-funcs}. Nesterenko developed an
algebraic technique for the estimation of multiplicities for functions
satisfying a certain type of linear differential equations. Similar
ideas were later successfully applied to non-linear systems by
Brownawell~\cite{brownawell:zero-est,brownawell:zero-ord}, by
Brownawell and Masser~\cite{bm:mult-I,bm:mult-II} and by
Nesterenko~\cite{nesterenko:mult-estimates}; these authors also gave
estimates for the sum of multiplicities over an arbitrary finite set
of points, which is of importance in transcendental number theory. The
algebraic techniques developed in these papers were later applied to
various more specific situations, notably to translation-invariant
vector fields on group varieties and sets of points related to the
group structure, leading to great progress in the field
(see~\cite{masser:zero-est-survey} for a survey).

In control theory, Risler~\cite{risler:nonholonomy} suggested the
problem of multiplicity estimates in the study of nonholonomic control
systems. Risler considered the planar case $n=2$, obtained a
multiplicity estimate, and used it to bound the degree of nonholonomy
for planar control systems. Gabrielov and Risler extended this work
and gave good multiplicity estimates for the case $n=3$ in
\cite{gr:mult-c3}. For arbitrary dimension, Gabrielov obtained a
multiplicity estimate in~\cite{gabrielov:mult-old}, and subsequently
developed a powerful technique involving Milnor fibers of certain
deformations, leading to much sharper estimates
in~\cite{gabrielov:mult}. Khovanskii later simplified Gabrielov's
arguments considerably using the notion of integration over Euler
characteristics. This simplification was later applied by Gabrielov
and Khovanskii to establish multiplicity estimates in the
multi-dimensional setting in~\cite{gk:mult}.

In the qualitative theory of differential equations, Novikov and
Yakovenko~\cite{ny:chains} have obtained multiplicity estimates in
their study of abelian integrals, relatsed to the infinitesimal Hilbert
16th problem. While not as sharp as the preceding estimates, these
estimates remain valid when one considers the number of zeros in a
small interval of prescribed length. Yomdin~\cite{yomdin:oscillation}
has studied a similar problem in the context of bifurcations of zeros
in analytic families, and obtained upper bounds using Gabrielov's
estimate.

\subsection{The estimates of Nesterenko and Gabrielov}
\label{sec:background}

In this section we give precise statements and some further discussion of
the multiplicity estimates of Nesterenko and Gabrielov, which are the
two best known estimates in our context. In summary, these two
estimates are incomparable: Nesterenko's estimate is sharp up to a
multiplicative constant with respect to the degree $d$, which is the
main asymptotic considered in transcendental number theory, but
doubly-exponential in $n$; Gabrielov's estimate on the other hand is
not sharp with respect to $d$, but exhibits an essentially optimal
simply-exponential growth with respect to the dimension $n$.

\subsubsection{Nesterenko's estimate}
\label{sec:nesterenko}

The estimates presented in this subsection are those
of~\cite{nesterenko:mult-estimates}. This work improved several
previous results by
Brownawell~\cite{brownawell:zero-est,brownawell:zero-ord} and by
Brownawell and Masser~\cite{bm:mult-I,bm:mult-II}. Brownawell and
Masser's principal idea was that while the multiplicity at a given
point may be quite large, this cannot occur too frequently. If one
sums up the multiplicities over several points, most points will
contribute terms of lower order. Nesterenko establishes a more refined
result following the same paradigm.

Nesterenko states his results in the projective context, for
homogeneous vector fields. To make the comparison with the rest of our
text transparent, we translate his result to the affine context. The
two formulations are easily seen to be equivalent (up to the precise
values of constants).

Let $V,P$ be as in~\eqref{eq:VP}, and let $p_1,\ldots,p_\nu\in\C^n$ be
non-singular points of $V$ which belong to the same trajectory of $V$
(that is, $\gamma_{p_i}$ can be obtained by analytic continuation
from $\gamma_{p_j}$ for any $1\le i,j\le\nu$), and assume that $P$
does not vanish identically on this trajectory. Let $\kappa$ denote
the transcendence degree of this trajectory (i.e., the dimension of the
smallest algebraic set containing the trajectory).

\begin{Thm*}[\protect{\cite[Theorem 1]{nesterenko:mult-estimates}}]
  For any trajectory $\gamma$ as above there exists a constant
  $C_\gamma$ such that for any collection of points $p_1,\ldots,p_\nu$
  belonging to $\gamma$,
  \begin{equation}
    \sum_{i=1}^\nu \mult_{p_i}^V P \le C_\gamma \sum_{j=1}^\kappa a_{\kappa-j}(C_\gamma d^j) d^j    
  \end{equation}
  where $a_q(T)$ denotes the maximum number of points among
  $p_1,\ldots,p_\nu$ lying in an irreducible variety of dimension $q$
  and degree at most $T$ in $\C^n$. In particular $a_0(T)\equiv1$.
\end{Thm*}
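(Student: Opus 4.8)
The plan is to pass from the germs $\gamma_{p_i}$ to the Zariski closure $X$ of their common trajectory, an irreducible variety of dimension $\kappa$ whose degree $D_0 := \deg X$ depends only on the trajectory. Since $\gamma = \gamma_{p_1}$ is Zariski dense in $X$ and $V$ is tangent to it, $V$ induces a derivation $D$ of $\C[X]$; reading $P$ modulo the ideal $I(X)$ one has, for each of the points $p=p_i$, that $\mult_p^V P = \min\{k \ge 0 : (D^k P)(p) \ne 0\}$. Writing $m_i := \mult_{p_i}^V P$ and setting $I_k := (P, DP, \dots, D^{k-1}P) \subseteq \C[X]$ and $Y_k := V(I_k) \subseteq X$, we obtain a descending chain $X = Y_0 \supsetneq Y_1 \supseteq Y_2 \supseteq \cdots$ --- the first inclusion strict because $P$ does not vanish on the dense set $\gamma$ --- in which $Y_k$ is cut out in $X$ by equations of degree $\le d$, and $p_i \in Y_{m_i} \setminus Y_{m_i+1}$. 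The crucial structural fact is the identity $I_{k+1} = I_k + D I_k$, which makes $\{I_k\}$ a differential filtration.

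First I would install degree control along the chain: a non-dropping step, $\dim Y_{k+1} = \dim Y_k$, cannot increase the degree (summed over top-dimensional components), while a dimension drop, $\dim Y_{k+1} < \dim Y_k$, multiplies it by at most $d$ by a refined Bézout inequality; hence as soon as $\dim Y_k = \kappa - j$ one has $\deg Y_k \le D_0 d^j$, and in particular every irreducible component of $Y_k$ of dimension $\kappa - j$ has degree $\le D_0 d^j$.

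The heart of the matter is a differential-algebraic persistence lemma of the kind underlying Nesterenko's method: if, for a range of indices $k$, the variety $Y_k$ retains an irreducible component of dimension $e$ and degree $\le \Delta$, then this range has length $O_e(\Delta)$, the mechanism being that the identity $I_{k+1} = I_k + D I_k$ forces the relevant Hilbert function (of the truncation in dimension $e$) to strictly decrease over such a range. Combined with the degree bound this already yields a per-point estimate $m_i \le C_\gamma d^\kappa$. To obtain the stated sum estimate one organizes $\sum_i m_i$ by dimension level, following the Brownawell--Masser paradigm as refined by Nesterenko: the steps $k$ at which a given point $p_i$ sits on a component of $Y_k$ of dimension exactly $\kappa - j$ are charged to that component --- an irreducible variety of degree $\le D_0 d^j$ --- so that for a fixed such variety $Z$ the number of the $p_i$ charged to it is at most $a_{\kappa-j}(D_0 d^j)$ by the very definition of $a_{\kappa-j}$, while the total number of charges at level $\kappa - j$ contributed by each point is $O(d^j)$ by the persistence lemma and the degree bound. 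Summing over $j = 1, \dots, \kappa$ and absorbing the dimensional and $\deg X$ constants into a single $C_\gamma$ gives $\sum_i \mult_{p_i}^V P \le C_\gamma \sum_{j=1}^\kappa a_{\kappa-j}(C_\gamma d^j) d^j$.

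I expect the main obstacle to be precisely this final reorganization together with the persistence lemma. One must make rigorous the notion of a point being resolved at dimension level $\kappa - j$, ensure no step of the chain is charged twice, and --- most delicately --- prove the persistence lemma uniformly over the a priori unboundedly many components that can appear, with the implied constants depending only on $\dim X$ and $\deg X$ and not on $d$ or $\nu$. Passing to the projective closure and working with homogeneous coordinate rings, and controlling embedded components and non-reduced structure so that the Hilbert-function monotonicity is genuinely available, are the technical points that require the most care.
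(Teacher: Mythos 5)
First, note that the paper does not prove this statement by the route you take (or at all, in this form): it is quoted as background from Nesterenko, and the paper's own derivation of it (with improved constants) comes much later, as a corollary of Theorem~\ref{thm:mult-mc-uniform-bound} and Proposition~\ref{prop:mc-degree-affine}, where the globally defined multiplicity cycles $\mc^k(P^\l)$ of degree $O(d^{n-k})$ make the ``charging'' over components a two-line computation. Your proposal instead attempts Nesterenko's original elimination-theoretic argument, and as it stands it has genuine gaps.

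The first gap is the degree control along the chain. The ideal $I_k$ is generated by $P,DP,\dots,D^{k-1}P$, and $\deg D^{k-1}P\le d+(k-1)(\delta-1)$ grows linearly in $k$, where $k$ runs up to the very multiplicity you are trying to bound (a priori of order $d^\kappa$). So $Y_k$ is \emph{not} cut out in $X$ by equations of degree $\le d$, the assertion that a dimension drop multiplies the degree by at most $d$ is unjustified, and the conclusion ``every component of $Y_k$ of dimension $\kappa-j$ has degree $\le D_0 d^j$'' does not follow; moreover your bookkeeping (``non-dropping steps do not increase the degree'') only controls top-dimensional components, while the components carrying the points $p_i$ at level $\kappa-j$ are exactly the lower-dimensional ones, whose degrees refined B\'ezout does not bound by $d^j$ without a genuinely different construction (in Nesterenko's proof this is done by building auxiliary \emph{prime} ideals of controlled degree and height via Chow forms, not by intersecting all the $D^mP$).

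The second gap is the ``persistence lemma'' together with the final reorganization. As you state it the lemma is false: a $V$-invariant subvariety of $X$ on which $P$ vanishes is contained in every $Y_k$, so a component of fixed dimension and degree can persist forever; any correct version must involve the order of vanishing along $\gamma$ (Nesterenko's index of an ideal with respect to the trajectory), and proving it is the analytic heart of his paper, not a consequence of Hilbert-function monotonicity for the filtration $I_{k+1}=I_k+DI_k$. Even granting both the lemma and the degree bounds, your accounting does not yield the stated inequality: bounding, for each point, the number of indices $k$ at level $\kappa-j$ by $O(d^j)$ and summing over points gives a term $\nu\,d^j$, not $a_{\kappa-j}(C_\gamma d^j)\,d^j$; to replace $\nu$ by $a_{\kappa-j}$ you would need all level-$(\kappa-j)$ incidences across the whole chain to be absorbed by a bounded family of degree-$C_\gamma d^j$ varieties, which is precisely what Nesterenko's induction on dimension (constructing one controlled prime ideal per dimension and applying an analytic zero lemma to it) provides and what your charging scheme does not. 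So the key quantitative steps that make the theorem true are missing rather than merely technical.
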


The constant $C_\gamma$ is not explicitly worked out
in~\cite{nesterenko:mult-estimates}, but from the proof one can
determine that it grows doubly-exponentially with the dimension $n$.
We note also that when $\kappa<n$, i.e. the trajectory is not
completely transcendental, the constant $C_\gamma$ depends on
algebraic complexity (for instance, the degree) of the Zariski closure
of $\gamma$. In this sense, the estimate is not entirely explicit,
since one cannot in general estimate the degree of this Zariski
closure purely in terms of $n,d,\delta$. For a discussion and a
comparison of Nesterenko's result and our result in this context see
Remark~\ref{rem:small-kappa}.

A few remarks are in order. If we restrict our attention to the case
of a single point $p$ and make no special assumptions on
$\kappa$\footnote{The situation with $\kappa<n$ is similar}, then the
theorem states that there exists a constant $C_p$ such that
$\mult_p^V P \le C_p d^n$. Since the linear space of polynomials of
degree $d$ has dimension of the order of $d^n$, this result is
essentially the best possible up to a multiplicative constant.

One could naively estimate the sum of the multiplicities over $\nu$
different points by $\nu d^n$. However, Nesterenko's result implies
that the coefficient of the $d^n$ term is a constant independent of
the number and position of the points $p_1,\ldots,p_\nu$. The
coefficient of the next term, of order $d^{n-1}$, may already depend
on the number and position of the points. However, the theorem
essentially states that the number of contributions of this order is
bounded by the number of points $p_i$ that could belong to an
irreducible curve of degree $C_\gamma d^{n-1}$. Next we have a contributions
of order $d^{n-2}$, whose number is bounded by the number of points
$p_i$ that could belong to an irreducible surface of degree $C_\gamma d^{n-2}$,
and so on.

As we have already seen, even for the case of a single point, the
growth of $\mult_p^V P$ with respect to $n$ is at least exponential in
$n$. However, the dependence of the constant $C_\gamma$ on the dimension $n$
is doubly exponential. In this sense the result is possibly
suboptimal, and as we shall see in~\secref{sec:gabrielov}, the
multiplicity can in fact grow no faster than exponentially in $n$.

\subsubsection{Gabrielov's estimate}
\label{sec:gabrielov}

We turn now to Gabrielov's estimate presented in~\cite{gabrielov:mult}.
In this work only the case of a single point was considered. Therefore
let $V,P$ be as in~\eqref{eq:VP} and $p\in\C^n$ a non-singular point
of $V$. Assume that $P$ does not vanish identically on $\gamma_p$ (the
trajectory through $p$).

\begin{Thm*}[\protect{\cite[Theorem 2]{gabrielov:mult}}]
  We have the upper bound
  \begin{equation}
    \mult_p^V P \le 2^{2n-1} \sum_{i=1}^n \left[ d+(i-1)(\delta-1)\right]^{2n}
  \end{equation}
\end{Thm*}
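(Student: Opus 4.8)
The plan is to realize the multiplicity $\mult_p^V P$ as a finite sum of local intersection numbers of the trajectory germ $\gamma_p$ with an ascending chain of algebraic \emph{polar varieties}, and then to bound each of those numbers by a Milnor--Thom-type estimate carried out in the $2n$ real dimensions of $\C^n$. First I would introduce the derived chain $f_0:=P$, $f_{k+1}:=Vf_k$; since applying $V$ raises degree by at most $\delta-1$, one has $\deg f_k\le d_k:=d+k(\delta-1)$. Parametrizing $\gamma_p$ by the flow time $t$, the restriction $f_k|_{\gamma_p}$ is the $k$-th time derivative of $F(t):=P(\phi_t(p))$, so $\ord_p F=\mu:=\mult_p^V P$ forces $\ord_p\big(f_k|_{\gamma_p}\big)=\mu-k$ for $0\le k\le\mu$. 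Thus bounding $\mu$ amounts to controlling how long the $f_k$ can keep vanishing at $p$ along the trajectory.

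Next, for $k=1,\dots,n$ I would define the $k$-th polar variety $\Gamma_k\subset\C^n$ as the Zariski closure of the locus of non-singular points $x$ of $V$ at which the germ $\gamma_x$ has contact of order $\ge k$ with $\{f_0=0\}$ — equivalently, the set cut out inside the flag $\{f_0=0\}\supseteq\{f_0=f_1=0\}\supseteq\cdots$ by the vanishing of $f_0,\dots,f_{k-1}$ together with a Wronskian-type minor built from their $V$-derivatives. The heart of the argument, and its genuinely Morse-theoretic ingredient, is the decomposition
\[
  \mu\ \le\ \sum_{k=1}^{n}\,(\gamma_p\cdot\Gamma_k)_p ,
\]
the sum of local intersection multiplicities at $p$. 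Heuristically, each time one differentiates along $\gamma_p$ and the next function $f_k$ fails to vanish, the order of contact of $\gamma_p$ with the current member of the flag drops, and the total drop across the flag is accounted for by the contacts with the $\Gamma_k$; the process must terminate after exactly $n$ steps because $\gamma_p$ cannot be tangent to the full flag to infinite order — it is not contained in the deepest variety, since $V$ is non-singular at $p$ and $P\not\equiv0$ on $\gamma_p$.

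To finish, I would bound each term $(\gamma_p\cdot\Gamma_k)_p$. As $\gamma_p$ is an analytic arc not contained in $\Gamma_k$, after intersecting with a small ball $B$ about $p$ and replacing $\Gamma_k$ by a generic small translate (or $\gamma_p$ by a nearby trajectory) the intersection becomes a finite transverse set, so $(\gamma_p\cdot\Gamma_k)_p\le\#(\gamma_p\cap\Gamma_k^{c}\cap B)$. The crucial point is that the inductive bookkeeping in the decomposition lets the $k$-th polar variety be presented, once the lower levels are used up, as the deformed zero set of a single function of degree $\le d_{k-1}$ — a Milnor fibre of $f_{k-1}$ along the previously built variety — rather than of a complete intersection whose degree is a product of all the $d_j$. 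Passing to real and imaginary parts turns this into a real algebraic subset of $\R^{2n}$, and the Oleinik--Petrovsky--Thom--Milnor bound gives $\#(\gamma_p\cap\Gamma_k^{c}\cap B)\le 2^{2n-1}d_{k-1}^{\,2n}$. Summing over $k=1,\dots,n$ (with $i=k$) yields $\mult_p^V P\le 2^{2n-1}\sum_{i=1}^n\big[d+(i-1)(\delta-1)\big]^{2n}$.

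The hard part will be establishing the decomposition $\mu\le\sum_k(\gamma_p\cdot\Gamma_k)_p$: this requires a careful Rolle/Morse-type analysis of how the order of tangency of the trajectory to the successive polar varieties evolves, a proof that each $\Gamma_k$ has the expected codimension $k$ near $p$, and a generic choice of coordinates to rule out degenerate configurations. A secondary obstacle is making the deformation in the last step honest — ensuring the translate meets $\gamma_p$ only inside $B$, that the relevant real-algebraic degree is really $d_{k-1}$ and not a product, and that the topological constant comes out exactly as $2^{2n-1}$.
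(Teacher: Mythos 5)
Your scaffolding --- the derived chain $V^kP$ of degrees $d+k(\delta-1)$, a flag of contact/polar loci, termination after $n$ steps, and a Milnor--Thom count in $\R^{2n}$ producing the factor $2^{2n-1}[\,\cdot\,]^{2n}$ --- matches the shape of Gabrielov's bound, but the two central steps do not hold as stated. First, the decomposition $\mu\le\sum_{k=1}^n(\gamma_p\cdot\Gamma_k)_p$ is never established: there is no complex Rolle phenomenon behind the heuristic that ``the order of contact drops'' at each differentiation, and the undeformed contact loci $\{P=VP=\cdots=V^{k-1}P=0\}$ contain $p$ for \emph{every} $k\le\mu$, so nothing forces them to have the expected codimension or the flag to stop at $k=n$. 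Securing exactly this is the role of the generic deformation $P_e=P+eP^c$ of Lemma~\ref{lem:gab-sard}, which must be performed \emph{before} the flag is formed.

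Second, and fatally, your last step applies the Oleinik--Petrovsky--Thom--Milnor bound to $\gamma_p\cap\Gamma_k\cap B$. That set is not real algebraic: $\gamma_p$ is a transcendental trajectory of the flow, defined by no polynomial equations, so taking real and imaginary parts does not yield a real algebraic subset of $\R^{2n}$, and no degree-based bound on its cardinality, nor on the local contact order $(\gamma_p\cdot\Gamma_k)_p$, is available. Bounding how often, or to what order, a trajectory can meet an algebraic set of controlled degree is precisely the content of the theorem, so this step is circular. Gabrielov's actual route --- the one reviewed in \secref{sec:milnor-mult} --- eliminates the trajectory altogether: after the deformation of Lemma~\ref{lem:gab-sard} one has $\mult_p^V P=\sum_r\chi(F^r_p)$ (Theorem~\ref{thm:gab-main}, proved by rectifying $V$ and integrating over the Euler characteristic), where $F^r_p$ is the Milnor fiber of the \emph{algebraic} set $\{P_e=VP_e=\cdots=V^{r-1}P_e=0\}$, empty for $r>n$; the Milnor--Thom-type estimate in $\R^{2n}$ is then applied to these algebraic fibers, whose defining equations have degrees $d+(i-1)(\delta-1)$, yielding the $i$-th summand $2^{2n-1}\left[d+(i-1)(\delta-1)\right]^{2n}$. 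Your polar-variety intuition is in fact closer to how the present paper sharpens Gabrielov's bound (Propositions~\ref{prop:pv-deg} and~\ref{prop:pv-deg-affine}), but there too the polar varieties are attached to the deformed algebraic fibers and cut by generic affine spaces, never intersected against the trajectory itself.
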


The dependence of this estimate on $n$ is simply-exponential, which as
we have seen is essentially the best possible. However, with respect
to $d$ this estimate has order $d^{2n}$, which is the square of the
correct growth (as we know from~\secref{sec:nesterenko}).

\subsection{Overview of this paper}

In this paper we consider the problem of bounding the multiplicity at
a point $p$, and more generally, the sum of multiplicities over an
arbitrary finite set of points. Our approach is based on a refinement
of Gabrielov's deformation technique. Following Gabrielov's ideas, we
translate the problem of estimating multiplicities to the problem of
estimating the Euler characteristics of Milnor fibers of certain
deformations related to $P$ and $V$. More generally, we consider the
problem of estimating the individual Betti numbers of Milnor fibers of
general deformations (under a certain smoothness assumption).

Through classical techniques of polar varieties, we translate the
problem of estimating Betti numbers to the study of some naturally
associated algebraic cycles and their intersection numbers. One can
then apply ideas from algebraic geometry to estimate the degrees of
the cycles and, consequently, obtain upper bounds for their
intersection numbers. Moreover, the algebraic cycles are defined
globally and provide a clear geometric picture for the situation
involving several points $p_1,\ldots,p_\nu$. As a result we obtain a
multiplicity estimate which simultaneously improves the estimates of
Nesterenko and of Gabrielov.

Our basic multiplicity estimate, for a single point $p$ and in terms
of the parameters $n,d,\delta$ in~\eqref{eq:VP} is the following
direct corollary of Theorem~\ref{thm:mult-mc-uniform-bound} and
Proposition~\ref{prop:mc-degree-affine}.

\begin{Cor}
  Let $d\ge n-1$ and let $p\in\C^n$ be a non-singular point
  of $V$, and assume that $\mult_p^V P$ is finite. Then
  \begin{equation}
    \mult_p^V P \le 2^{n+1} (d+(n-1)(\delta-1))^n 
  \end{equation}
\end{Cor}

Since our description is given in terms of certain naturally defined
varieties, it can be adapted to take into account additional geometric
structure on the ambient space, the polynomial $P$ and the vector
field $V$. To demonstrate this, we give multiplicity estimates
depending on the volumes of the Newton polytopes of $P$ and $V$ in the
context of the torus group $(\C^*)^n$, analogous to the BKK theorem
(which is reviewed in~\secref{sec:bkt}). The basic estimate in terms
of degrees follows immediately as a special case.

Below $\Delta(P),\Delta(V)$ denote the Newton polytopes of $P$ and $V$
respectively, and $\Delta_x$ denotes the standard simplex in the
$x$-variables (see~\secref{sec:bkt} for the definitions). The
following is a direct corollary of
Theorem~\ref{thm:mult-mc-uniform-bound} and
Proposition~\ref{prop:mc-degree}.

\begin{Cor}
  Let $(n-1)\Delta_x\subset\Delta(P)$ and let $p\in(\C^*)^n$ be a non-singular point
  of $V$, and assume that $\mult_p^V P$ is finite. Then
  \begin{equation}
    \mult_p^V P \le 2^{n+1} n! \vol(\Delta(P)+(n-1)\Delta(V)+\Delta_x) 
  \end{equation}
\end{Cor}

We now present two analogous multiplicity estimates describing the
behavior of the multiplicity function as the point $p$ varies. We
begin with a definition.

\begin{Def} \label{def:degf}
  Let $M$ denote $\C^n$ or $(\C^*)^n$.
  For an irreducible variety $W\subset M$, we define the function
  \begin{equation}
    \degf_W:M\to\N \qquad \degf_W(p) = \begin{cases}
      \deg W & p\in W \\
      0 & \text{otherwise}
    \end{cases}
  \end{equation}
  We extend this by linearity to define $\degf_C$ for an arbitrary
  effective algebraic cycle $C\subset M$.
\end{Def}

Once again we give two statements, the former in terms of the degrees
$d,\delta$ and the latter in terms of the Newton polytopes
$\Delta(P),\Delta(V)$. These two results are direct corollaries of
Theorem~\ref{thm:mult-mc-uniform-bound} and
Propositions~\ref{prop:mc-degree} and~\ref{prop:mc-degree-affine}.

\begin{Cor}
  Let $d\ge n-1$. There exist algebraic cycles
  \begin{equation}
    \mc^0(P),\ldots,\mc^{n-1}(P)\subset\C^n
  \end{equation}
  where
  \begin{align*}
    \dim\mc^k(P) &= k \\
    \deg\mc^k(P) &< 2^n(d+(n-k-1)(\delta-1))^{n-k}
  \end{align*}
  such that for every $p\in\C^n$ where $V$ is non-singular and
  $\mult_p^V P$ is finite,
  \begin{equation} \label{eq:thm-simple}
    \mult_p^V P \le \sum_{k=0}^{n-1} \degf_{\mc^k(P)}(p).
  \end{equation}
\end{Cor}

Below $\qmi_k(\cdot)$ denotes the $k$-th simplicial quermassintegral
introduced in~\eqref{eq:qmi-def}.

\begin{Cor}
  Let $(n-1)\Delta_x\subset\Delta(P)$. There exist algebraic cycles
  \begin{equation}
    \mc^0(P),\ldots,\mc^{n-1}(P)\subset(\C^*)^n
  \end{equation}
  where
  \begin{align*}
    \dim\mc^k(P) &= k \\
    \deg\mc^k(P) &< 2^n n! \qmi_k(\Delta(P)+(n-k-1)\Delta(V)+\Delta_x)
  \end{align*}
  such that for every $p\in\C^n$ where $V$ is non-singular and
  $\mult_p^V P$ is finite,
  \begin{equation} 
    \mult_p^V P \le \sum_{k=0}^{n-1} \degf_{\mc^k(P)}(p).
  \end{equation}
\end{Cor}

The two corollaries above present a picture similar to the one given
in Nesterenko's estimate. In~\secref{sec:improving-ng} we show how to
derive Nesterenko's result from ours.

We remark that in the full formulation of our result, the contribution
of each cycle $\mc^k(P)$ is not the degree $\degf_{\mc^k(P)}$, but
rather the order of intersection between $\mc^k(P)$ and a certain
special affine-linear space passing through $p$. While we do not fully
investigate this in the present paper, in some contexts this extra
information can lead to significantly stronger estimates than the
naive one used in~\eqref{eq:thm-simple}.

\subsection{Contents of the paper}

The contents of this paper are as follows. In~\secref{sec:prelims} we
discuss general preliminaries. In~\secref{sec:milnor-fibers} we
discuss Milnor fibers and their relation to multiplicity estimates.
In~\secref{sec:betti} we give estimates for the Betti numbers of the
Milnor fiber. In~\secref{sec:mult-estimates} we give the full
formulation of our multiplicity estimates in various contexts. We show
how our estimates improve those of Nesterenko, Gabrielov and Risler.

In Appendix~\ref{sec:appendix} we discuss a general compactness
property which is useful in establishing uniform algebraic
semicontinuous bounds. In Appendix~\ref{sec:notations} we give
a list of the main notations used in this paper.

\subsection{Acknowledgements}

I would like to express my gratitude to Askold Khovanskii, Andrei
Gabrielov and David Massey for invaluable discussions during the
preparation of this manuscript. I also wish to thank the anonymous
referees for many suggestions improving the accuracy and readability
of the text.

\section{General preliminaries}
\label{sec:prelims}

In this section we discuss some preliminaries that shall be
needed in the sequel. In~\secref{sec:cycles} we review the
theory of algebraic cycles and their intersection numbers.
In~\secref{sec:bkt} we review the notion of mixed volume of
convex bodies and its relation to intersection theory on the
torus $(\C^*)^n$ through the Bernstein-Kushnirenko-Khovanskii theorem.

\subsection{Algebraic cycles and intersection numbers}
\label{sec:cycles}

We introduce some basic results on algebraic cycles and their
intersection theory. For the purposes of this paper we will assume
that the ambient variety $M$ is given by $\C^n$ or $(\C^*)^n$ in the
algebraic case, or by the germ of these varieties at a point in the
analytic case (although the subject can be developed in far greater
generality, see~\cite{fulton:it} for a canonical reference). We denote
the coordinate ring of $M$ by $R$.

A \emph{$k$-cycle} is a finite formal sum $\sum n_i [V_i]$ where
$V_i\subset M$ are $k$-dimensional irreducible subvarieties of $M$ and
$n_i$ are integers. A \emph{cycle} is a (finite) sum of cycles of any
dimension. In this paper, we shall deal exclusively with cycles with
positive coefficients.

We say that two varieties $V,W\subset M$ intersect properly at a
component $Z\subset V\cap W$ if $\codim Z=\codim V+\codim W$. In this
case there is a well defined intersection number $i(Z;V\cdot W;M)$.
If $V,W$ intersect properly at every component of their intersection
then there is a well defined intersection cycle
\begin{equation}
  V\cdot W = \sum_{Z\subset V\cap W} i(Z;V\cdot W;M) [Z].
\end{equation}
This product can be extended by linearity to the product of arbitrary
cycles, assuming that all intersections are proper.

We now describe the behavior of the intersection product with respect
to continuous deformation. Let $T$ denote the germ of a non-singular
curve at a point $t_0$, and consider a variety $V\subset M\times T$
which is flat over $T$. Then for $t\in T$ we have a well defined cycle
$V_t:=V\cap(M\times\{t\})$ (see~\cite[10.1]{fulton:it}).

If $\dim V=1$, then $V_t$ is a formal sum of points with positive
multiplicities. Conservation of numbers implies that the multiplicity
of the cycle $[p]$ in $V_{t_0}$ is given by the number of points in
$V_t$ (with multiplicities) converging to $p$ as $t\to t_0$.

To generalize this to arbitrary intersections, we have the following
continuity axiom~\cite[11.4.4.iii]{fulton:it}. Consider another
$W\subset M\times T$ which is flat over $T$, and suppose that $W_t$
meets $V_t$ properly for each $t\in T$. Then $V$ meets $W$ properly in
$M\times T$ and
\begin{equation} \label{eq:intersection-cont}
  (V\cdot W)_t = V_t \cdot W_t.
\end{equation}
We remark that in~\cite{fulton:it} this property is stated
axiomatically for the case where $W$ is a constant family
$W_0\times T$. To obtain the general case one considers the
intersection of $V\times_T W\subset M\times M\times T$ and the
diagonal $\Delta\times T\subset M\times M\times T$.

As a particular case of~\eqref{eq:intersection-cont}, when
$V\cdot W$ is a curve we obtain a description of the
multiplicity of $p$ in $V_{t_0}\cdot W_{t_0}$ as the number of points
of $(V\cdot W)_t$ converging to $p$ as $t\to t_0$.

If $V=\sum n_i [p_i]$ is an algebraic cycle of dimension $0$, then we
define $\deg V:=\sum n_i$. If $M$ is $\C^n$ or $(\C^*)^n$ and $V$ is
an algebraic cycle of pure dimension $k$, then we define $\deg V$ to
be $\deg V\cdot L$ where $L$ is a generic affine plane of codimension
$k$. By the continuity of intersection numbers, the degree function
is lower-semicontinuous on flat families.

\subsection{The Bernstein-Kushnirenko-Khovanskii Theorem}
\label{sec:bkt}

We give an overview of the notion of mixed volume and its relation to
the geometry of the torus group $(\C^*)^n$, encapsulated by the
Bernstein-Kushnirenko-Khovanskii (henceforth BKK) theorem. We follow
the presentation of~\cite{kk:convex-bodies}.

Recall that for $n$ convex bodies $\Delta_1,\ldots,\Delta_n$ in
$\R^n$, their \emph{mixed volume} is defined to be
\begin{equation}
  V(\Delta_1,\ldots,\Delta_n) = \pd{^n}{\lambda_1\cdots\partial \lambda_n} \vol(\lambda_1\Delta_1+\cdots+\lambda_n\Delta_n)
  \rest{\lambda_1=\cdots=\lambda_n=0^+}.
\end{equation}
The mixed volume is symmetric and multilinear, and generates
the volume function in the sense that $V(\Delta,\ldots,\Delta) = \vol(\Delta)$.
In fact, these properties completely determine the mixed volume
function.

Given a Laurent polynomial $P\in\C[x_1^{\pm1},\ldots,x_n^{\pm1}]$, we define its
support $\supp P\subset\Z^n$ to be the set of exponents appearing with non-zero
coefficients in $P$. For any set $A\subset\Z^n$ we denote by $\Delta_A$ the
convex hull of $A$ (in $\R^n$). Finally, we let $\Delta(P):=\Delta_{\supp P}$.

To each nonempty set $A\subset\Z^n$ we associate the vector space of polynomials
$P$ having $\supp P\subset A$.
\begin{Thm}[\protect{\cite{kushnirenko:bk,bernstein:bk}}]
  Let $A_1,\ldots,A_n\subset\Z^n$. Then for generic $P_i\in L_{A_i}$,
  the system of equations $P_1=\cdots=P_n=0$ admits exactly $\mu$ solutions
  in $(\C^*)^n$, where
  \begin{equation}
    \mu = n! V(\Delta_{A_1},\ldots,\Delta_{A_n}).
  \end{equation}
\end{Thm}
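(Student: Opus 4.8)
The plan is to reduce the theorem to the uniqueness statement recalled above: the mixed volume is the only symmetric function of $n$-tuples of convex bodies that is multilinear with respect to Minkowski addition and restricts to $\vol$ on the diagonal. Accordingly, for nonempty $A_1,\dots,A_n\subset\Z^n$ let $N(A_1,\dots,A_n)$ denote the number of isolated points in the common zero set in $(\C^*)^n$ of a generic system $P_1=\dots=P_n=0$ with $P_i\in L_{A_i}$. First I would check that $N$ is well defined: the incidence variety $\{(P_\bullet,x)\mid P_i(x)=0\}\subset(\prod_i L_{A_i})\times(\C^*)^n$ has dimension $\dim\prod_i L_{A_i}$ (each equation $P_i(x)=0$ cuts one hyperplane on the generic fibre since $A_i\neq\emptyset$), so the locus of parameters with finite zero set is dense open; on it a Bertini--Sard argument makes all solutions simple for generic parameters, hence the count is locally constant and, by irreducibility of the parameter space, constant. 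Symmetry of $N$ is immediate; translation invariance of each $A_i$ by a lattice vector holds because multiplying $P_i$ by a Laurent monomial does not change its zero set in $(\C^*)^n$; and a semicontinuity argument shows $N$ depends only on the polytopes $\Delta_{A_i}$.

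Two properties then remain. For the normalization $N(A,\dots,A)=n!\,\vol(\Delta_A)$ (Kushnirenko's case) I would identify the generic count with the degree of the projective toric variety $X_{\Delta_A}$ cut by generic hyperplanes in the embedding furnished by the lattice points of a large multiple of $\Delta_A$, and compute that degree from the leading coefficient of its Hilbert polynomial, which is the Ehrhart polynomial of $\Delta_A$ and hence has leading coefficient $\vol(\Delta_A)$. For multilinearity, $N(A_1'+A_1'',A_2,\dots,A_n)=N(A_1',A_2,\dots,A_n)+N(A_1'',A_2,\dots,A_n)$ with Minkowski sum on the left, the subtlety is that $L_{A_1'+A_1''}$ strictly contains the cone of products $P_1'P_1''$, so one cannot simply split the first equation. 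I would deform a generic $F\in L_{A_1'+A_1''}$ inside this linear system to such a product and invoke conservation of number; the genuine obstacle is to show that no solutions escape to the boundary of $(\C^*)^n$ as the deformation degenerates.

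This escape-to-the-boundary phenomenon is the main difficulty, and it is dealt with most cleanly by a single argument that in fact proves the whole theorem at once: pass to a smooth projective toric variety $X_\Sigma$ whose fan refines the normal fans of all the $\Delta_{A_i}$. Each $P_i$ extends to a section of a line bundle $\cL_i$ on $X_\Sigma$ generated by the sections corresponding to the lattice points of $\Delta_{A_i}$; a genericity argument (Bertini on $X_\Sigma$, using that the $\cL_i$ are base-point-free) shows the common zero cycle of generic $P_i$ is reduced, zero-dimensional and disjoint from every boundary divisor, hence supported in $(\C^*)^n$, so that $N(A_1,\dots,A_n)=\deg(\cL_1\cdots\cL_n)$ as an intersection number on $X_\Sigma$. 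The combinatorial description of intersection numbers of nef toric divisors then expresses $\deg(\cL_1\cdots\cL_n)$ as $n!\,V(\Delta_{A_1},\dots,\Delta_{A_n})$, which is exactly $\mu$. This last computation simultaneously supplies the normalization and multilinearity, so that it can equally well be invoked through the uniqueness of the mixed volume as set up in the first paragraph.
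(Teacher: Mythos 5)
The paper does not prove this statement at all: it is quoted as the classical Bernstein--Kushnirenko theorem with a citation to the original papers, and is only used as a black box (together with the conservation-of-number remark that follows it). So there is no internal proof to compare against; what can be assessed is whether your outline is a sound proof of the classical theorem. It is: the route you take in your final paragraph --- compactify $(\C^*)^n$ to a smooth projective toric variety $X_\Sigma$ whose fan refines the normal fans of the $\Delta_{A_i}$, realize each $P_i$ as a section of the corresponding nef, globally generated line bundle $\cL_i$, use Bertini to make the common zero scheme reduced, zero-dimensional and disjoint from the toric boundary, and then identify the count with the intersection number $\deg(\cL_1\cdots\cL_n)=n!\,V(\Delta_{A_1},\ldots,\Delta_{A_n})$ --- is one of the standard modern proofs, and it indeed subsumes the normalization and multilinearity steps of your first paragraph, which is therefore largely redundant scaffolding (and whose direct multilinearity step you correctly flag as delicate because of solutions escaping to the boundary).

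One detail you should make explicit, since it is the only place where the statement's precise hypothesis ``generic $P_i\in L_{A_i}$'' (rather than generic in the complete linear system of $\cL_i$) matters: the sections you are allowed to perturb within are only those with exponents in $A_i$, which may be a proper subset of $\Delta_{A_i}\cap\Z^n$, so the Bertini argument must be run for the subsystem spanned by $A_i$. This works because the vertices of $\Delta_{A_i}=\Delta(A_i)$ belong to $A_i$, and the sections attached to the vertices already generate $\cL_i$ at every point of $X_\Sigma$ (on each torus-orbit closure the section of the vertex minimizing the relevant linear functional is nonvanishing), so the subsystem is base-point-free and generic members of it behave exactly like generic members of the complete system for the purposes of reducedness and avoidance of the boundary. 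With that point spelled out, your argument is complete in outline; the remaining ingredients (Ehrhart/asymptotic Riemann--Roch for the diagonal case, multilinearity of intersection numbers) are standard.
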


It follows by conservation of numbers that for any choice of
$P_i\in L_{A_i}$, not necessarily generic, the quantity $\mu$ above is
an upper bound for the number of \emph{isolated} solutions of
$P_1=\cdots=P_n=0$.

The following is a simple consequence for the computation of mixed
volumes.

\begin{Cor} \label{cor:mixed-vol-orth}
  Suppose that $\R^n=L_1\oplus L_2$ is an orthogonal decomposition, and
  that $\Delta_1,\ldots,\Delta_s\subset L_1$ and $\Delta_{s+1},\ldots,\Delta_n\subset L_2$
  are collections of convex bodies. Then
  \begin{equation} \label{eq:mixed-vol-orth}
    n! V(\Delta_1,\ldots,\Delta_n) = \big[ s! V(\Delta_1,\ldots,\Delta_s) \big]\cdot
    \big[ (n-s)! V(\Delta_{s+1},\ldots,\Delta_n) \big].
  \end{equation}
  In particular, if $s\neq\dim L_1$ then $V(\Delta_1,\ldots,\Delta_n)=0$.
\end{Cor}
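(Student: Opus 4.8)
The plan is to deduce the identity \eqref{eq:mixed-vol-orth} from the characterization of the mixed volume as the unique symmetric, multilinear function whose diagonal restriction is the Euclidean volume. First I would fix the orthogonal splitting $\R^n = L_1\oplus L_2$ with $\dim L_1 = r$, $\dim L_2 = n-r$, and introduce the bilinear pairing
\begin{equation*}
  B(\Delta_1,\ldots,\Delta_n) := \big[ r!\, V_{L_1}(\pi_1\Delta_1,\ldots,\pi_1\Delta_r)\big]\cdot\big[(n-r)!\, V_{L_2}(\pi_2\Delta_{r+1},\ldots,\pi_2\Delta_n)\big],
\end{equation*}
where $\pi_i$ denotes orthogonal projection onto $L_i$ and $V_{L_i}$ is the mixed volume computed intrinsically in $L_i$. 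The statement to prove is then $n!\,V(\Delta_1,\ldots,\Delta_n) = B(\Delta_1,\ldots,\Delta_n)$ whenever $\Delta_1,\ldots,\Delta_r\subset L_1$ and $\Delta_{r+1},\ldots,\Delta_n\subset L_2$; the final sentence of the corollary (vanishing when $s\neq\dim L_1$) will fall out once we observe that mixed volume is monotone and that a collection of bodies all contained in a proper subspace has vanishing mixed volume, which forces $s\le r$ from one factor and $n-s\le n-r$ from the other.

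The key step is to compute $\vol\big(\lambda_1\Delta_1 + \cdots + \lambda_n\Delta_n\big)$ for small nonnegative $\lambda_i$. Writing $A = \lambda_1\Delta_1+\cdots+\lambda_r\Delta_r \subset L_1$ and $C = \lambda_{r+1}\Delta_{r+1}+\cdots+\lambda_n\Delta_n\subset L_2$, the Minkowski sum $A + C$ is, because $L_1\perp L_2$, exactly the orthogonal product $A\times C$, so $\vol_{\R^n}(A+C) = \vol_{L_1}(A)\cdot\vol_{L_2}(C)$. Now $\vol_{L_1}(A)$ is a homogeneous polynomial of degree $r$ in $\lambda_1,\ldots,\lambda_r$ and $\vol_{L_2}(C)$ is homogeneous of degree $n-r$ in $\lambda_{r+1},\ldots,\lambda_n$ (this is the standard polynomiality of volume of Minkowski combinations, underlying the very definition of mixed volume). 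Applying the differential operator $\partial^n/\partial\lambda_1\cdots\partial\lambda_n$ at the origin, the mixed partials factor: only the top-degree monomial $\lambda_1\cdots\lambda_r$ of $\vol_{L_1}(A)$ and the top-degree monomial $\lambda_{r+1}\cdots\lambda_n$ of $\vol_{L_2}(C)$ survive, and their coefficients are by definition $V_{L_1}(\Delta_1,\ldots,\Delta_r)$ and $V_{L_2}(\Delta_{r+1},\ldots,\Delta_n)$ respectively. Thus $n!\,V(\Delta_1,\ldots,\Delta_n)$ equals $\big[r!\,V_{L_1}(\Delta_1,\ldots,\Delta_r)\big]\cdot\big[(n-r)!\,V_{L_2}(\Delta_{r+1},\ldots,\Delta_n)\big]$, which is \eqref{eq:mixed-vol-orth} after noting $\pi_i\Delta_j = \Delta_j$ for the bodies already lying in $L_i$.

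I do not anticipate a serious obstacle; the argument is essentially bookkeeping around the factorization $\vol(A+C)=\vol(A)\vol(C)$. The one point requiring a little care is keeping the normalizing factorials straight: the $n!$ on the left and the $r!\,(n-r)!$ on the right must be matched against the combinatorial count of how the operator $\partial^n/\partial\lambda_1\cdots\partial\lambda_n$ distributes over the product of the two volume polynomials — but since the two polynomials depend on disjoint sets of variables, the cross terms vanish and no multinomial coefficient intervenes, so the factorials simply come along as the normalizations intrinsic to each $V_{L_i}$. For the "in particular" clause, if $s < r$ then $\Delta_{s+1},\ldots,\Delta_n$ include more than $n-r$ bodies in the $(n-r)$-dimensional space $L_2$; after replacing each by a full-dimensional body containing it, monotonicity and the above factorization show the mixed volume is still zero because a mixed volume in $L_2$ of more than $n-r$ arguments is not even defined as such — more precisely one runs the same computation and finds $\vol_{L_2}(C)$ has degree $n-r < $ the number of $\lambda$'s it depends on, so the relevant mixed partial vanishes; the case $s>r$ is symmetric.
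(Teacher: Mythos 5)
Your argument is correct, but it takes a different route from the paper. The paper deduces Corollary~\ref{cor:mixed-vol-orth} from the Bernstein--Kushnirenko theorem: assuming the $\Delta_i$ are lattice polytopes, it interprets $n!V(\Delta_1,\ldots,\Delta_n)$ as the number of solutions of a generic system $P_1=\cdots=P_n=0$ with prescribed Newton polytopes, observes that the two groups of equations involve disjoint sets of variables so the solution set is a product, and dismisses general convex bodies with a remark about continuous approximation. You instead work directly from the definition of mixed volume: for $A\subset L_1$, $C\subset L_2$ orthogonal, the Minkowski sum is the orthogonal product, so $\vol(A+C)=\vol_{L_1}(A)\,\vol_{L_2}(C)$, and applying $\partial^n/\partial\lambda_1\cdots\partial\lambda_n$ to the product of two volume polynomials in disjoint variables factors the mixed partial; your degree count likewise handles the vanishing when $s\neq\dim L_1$. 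Your proof buys generality and self-containedness: it covers arbitrary convex bodies in one stroke (no lattice assumption, no approximation step) and does not invoke the Bernstein--Kushnirenko theorem at all, whereas the paper's proof is essentially a one-line corollary of a theorem it already needs and is only carried out in the polytope case actually used later. One small point of care, which you flagged and resolved correctly: the factorials in \eqref{eq:mixed-vol-orth} correspond to the normalization $V(\Delta,\ldots,\Delta)=\vol(\Delta)$ (the one consistent with the statement of the Bernstein--Kushnirenko theorem), so each intrinsic mixed partial contributes $s!V_{L_1}$ and $(n-s)!V_{L_2}$ respectively; with that convention your bookkeeping is exactly right.
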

\begin{proof}
  We may after an orthogonal change of coordinates assume that $L_1$
  is spanned by the $x_1,\ldots,x_s$ coordinates and $L_2$ is spanned
  by the $x_{s+1},\ldots,x_n$ coordinates. We will prove the claim
  under the assumption that $\Delta_1,\ldots,\Delta_n$ are Newton
  polytopes, i.e. convex hulls of subsets of the lattice $\Z^n$. The
  general claim can be established by continuous approximation
  (although in this paper we will only use the claim in this more
  restrictive sense).

  Under our assumption, the left hand side of~\eqref{eq:mixed-vol-orth} may
  be viewed as the number of zeros of a generic set of equations
  $P_1=\cdots=P_n=0$ with $P_i\in L_{\Delta_i}$. Since $P_1,\ldots,P_s$ and $P_{s+1},\ldots,P_n$
  involve disjoint sets of variables, it is clear that
  \begin{equation}
    \{P_1=\cdots=P_n=0\} = \{P_1=\cdots=P_s=0\} \times \{P_{s+1}=\cdots=P_n=0\}.
  \end{equation}
  The claim now follows by the BKK theorem.
\end{proof}

Let $\Delta_x:=\Delta(1+x_1+\cdots+x_n)$ denote the standard simplex
in the $x$-variables. For any convex body $\Delta$ and $j=0,\ldots,n$
we define the $j$-th (simplicial) \emph{quermassintegral} as
\begin{equation} \label{eq:qmi-def}
  \qmi_j(\Delta) = V(\underbrace{\Delta,\ldots,\Delta}_{n-j\text{ times}},
    \underbrace{\Delta_x,\ldots,\Delta_x}_{j\text{ times}}).
\end{equation}
We note that it is customary to use the Euclidean ball in place of the
standard simplex $\Delta_x$, but for our purposes the simplicial
normalization is more convenient.

Finally we remark on the affine case.
\begin{Rem}\label{rem:bk-affine}
  Recall that a \emph{co-ideal} in a semi-group is a set whose
  complement is an ideal. Suppose that
  $\Delta_1,\ldots,\Delta_n\subset\Z_{\ge0}^n$ are convex co-ideals
  and let $c_1,\ldots,c_n\in\C^n$. Then
  \begin{equation}
    \Delta(P_i)\subset\Delta_i \implies \Delta(P_i(x_1+c_1,\ldots,x_n+c_n))\subset\Delta_i
  \end{equation}
  In this case the BKK estimate holds even if one considers the number
  of solutions of a generic system of equations with assigned Newton
  polyhedrons in $\C^n$. Indeed, after a generic translation one may
  assume that all solutions lie in $(\C^*)^n$ and apply the usual BKK
  theorem.
\end{Rem}

\section{Milnor fibers and Multiplicities}
\label{sec:milnor-fibers}

In this section we recall a general notion of a Milnor fiber
of a deformation (due to L\^e), and its relation to the multiplicity
of an analytic function restricted to the trajectory of an
analytic vector field (due to Gabrielov).

In an effort to make the presentation self-contained, we have included
sketches for the proofs of most results that we shall use in the
sequel (with references for the original full proofs).

\subsection{L\^e's Milnor fiber of a deformation}
\label{sec:le-milnor}

We begin with the definition due to L\^e~\cite{le:milnor-fiber},
extending the notion of a Milnor fiber to a general deformation of an
analytic set. For simplicity we work in the ambient space
$M=\C^N\times\C$, and denote the projections to the second factor by
$e:M\to\C$.

\begin{Def} \label{def:milnor-fiber}
  Let $X\subset M$ and $x\in X$ with $e(x)=0$. Denote $X_\e:=X\cap e^{-1}(\e)$,
  and suppose that $e\rest X$ is flat in a neighborhood of $x$, i.e. $X_0$ is
  obtained as the limit of $X_\e$. We think of $X$ as a deformation
  of $X_0$.

  Then for any sufficiently small $\delta>0$ and $\delta\gg|\e|>0$, the
  homotopy type of the set $B_\delta(x)\cap X_\e$ is independent of
  the choice of $\delta,\e$ and is called the \emph{Milnor fiber} of
  $(X,x)$. Here $B_\delta(x)$ denotes the real Euclidean ball with
  respect to the standard metric.
\end{Def}

We remark that much of the material in this section can be generalized
to the case where $e$ is an arbitrary analytic function defined in a
neighborhood of $x$. The assumption that $e$ is a separate coordinate
slightly simplifies our presentation.

It is of importance for us that the Milnor fiber does not depend on
the coordinate system used to construct the balls. More generally, one
may also be interested in computing the Milnor fiber replacing the
balls $B_\delta(x)$ by a family of polydiscs. L\^e~\cite{le:monodromy}
defines the notion of \emph{privileged families} of neighborhoods for
this purpose.

The full definition is technical and goes beyond the
scope of this paper, but for the purpose of our exposition it will
suffice to specify one key property: if $\{P_\alpha\}$ is a privileged
family of neighborhoods, then for any sufficiently small $P_\alpha\subset P_\beta$
there exists $\e'>0$ such that for any $0<\e<\e'$ the inclusion
$P_\alpha\cap X_\e\hookrightarrow P_\beta\cap X_\e$ is a homotopy equivalence. L\^e shows
that the family of balls and the family of polydiscs in sufficiently
generic linear coordinates form privileged families.

With this definition we have the following standard fact.

\begin{Fact}\label{fact:priv-family}
  The Milnor fiber of $X$ at $x$ is the same (up to homotopy
  equivalence) when computed using any privileged family.
\end{Fact}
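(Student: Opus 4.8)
The plan is to deduce Fact~\ref{fact:priv-family} directly from the key property of privileged families that was isolated just before the statement, namely: for any sufficiently small $P_\alpha\subset P_\beta$ in a privileged family there is $\e'>0$ so that for $0<\e<\e'$ the inclusion $P_\alpha\cap X_\e\hookrightarrow P_\beta\cap X_\e$ is a homotopy equivalence. First I would recall that by definition the Milnor fiber computed with a privileged family $\{P_\alpha\}$ is the homotopy type of $P_\alpha\cap X_\e$ for $P_\alpha$ sufficiently small and $\e$ sufficiently small relative to $\alpha$; the stated property already shows this homotopy type is well-defined (independent of $\alpha,\e$ in the allowed range) \emph{within} a single privileged family, by chaining two small members $P_\alpha\subset P_\beta$ through their common refinement. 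So the content of the Fact is the comparison \emph{across two different} privileged families $\{P_\alpha\}$ and $\{Q_\sigma\}$, one of which we may take to be the standard family of Euclidean balls $\{B_\delta(x)\}$.

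The main step is an interleaving argument. Because $\{P_\alpha\}$ and $\{Q_\sigma\}$ are both bases of neighborhoods of $x$ shrinking to $x$, I can choose a nested chain $P_{\alpha_1}\supset Q_{\sigma_1}\supset P_{\alpha_2}\supset Q_{\sigma_2}$ of sufficiently small members, alternating between the two families, with each inclusion one of the "sufficiently small" inclusions to which the privileged-family property applies (for $P\subset P$ by the property for $\{P_\alpha\}$, for $Q\subset Q$ by the property for $\{Q_\sigma\}$; for the mixed inclusions $Q_{\sigma_i}\subset P_{\alpha_i}$ and $P_{\alpha_{i+1}}\subset Q_{\sigma_i}$ one uses that each $Q_{\sigma}$, being a member of a privileged family, is itself a privileged neighborhood in the sense that it can be compared to balls — or, more cleanly, one reduces everything to the family of balls, comparing each of $\{P_\alpha\}$ and $\{Q_\sigma\}$ to $\{B_\delta\}$ separately). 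Then pick $\e$ smaller than all the finitely many thresholds $\e'$ produced by these inclusions. Applying the homotopy-equivalence property to each inclusion in the chain yields, after intersecting everything with $X_\e$, a sequence of homotopy equivalences
\begin{equation}
  P_{\alpha_2}\cap X_\e \hookrightarrow Q_{\sigma_1}\cap X_\e \hookrightarrow P_{\alpha_1}\cap X_\e,
\end{equation}
and the composite $P_{\alpha_2}\cap X_\e\hookrightarrow P_{\alpha_1}\cap X_\e$ agrees with the inclusion coming from $\{P_\alpha\}$ alone, which is also a homotopy equivalence; a standard 2-out-of-3 argument for the triangle of inclusions then forces $Q_{\sigma_1}\cap X_\e \hookrightarrow P_{\alpha_1}\cap X_\e$ to be a homotopy equivalence as well. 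Hence the Milnor fiber computed with $\{Q_\sigma\}$, which is the homotopy type of $Q_{\sigma_1}\cap X_\e$, coincides with the one computed with $\{P_\alpha\}$.

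I expect the only real subtlety to be organizing the interleaving so that every inclusion used is genuinely one to which the privileged-family property applies — in particular making sure the "sufficiently small" and the subsequent "sufficiently small $\e$" quantifiers are threaded in the right order (choose all the neighborhoods first, getting finitely many thresholds $\e'_1,\dots,\e'_m$, then take $\e<\min_j \e'_j$). A clean way to sidestep cross-family comparisons entirely is to prove the statement only relative to the fixed reference family of balls $\{B_\delta(x)\}$ — i.e. show that for any privileged family $\{P_\alpha\}$ one has, for small enough members and small enough $\e$, a homotopy equivalence $B_\delta(x)\cap X_\e \simeq P_\alpha\cap X_\e$ by interleaving balls and $P_\alpha$'s — since that already implies any two privileged families give the same homotopy type. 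Either way the proof is essentially formal once the key property quoted above is in hand; no new geometry is needed, and one may cite~\cite{le:monodromy} for the underlying fact that balls and generic polydiscs are privileged.
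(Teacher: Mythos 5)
Your interleaving chain and your handling of the quantifiers (fix the finitely many neighborhoods first, then take $\e$ below all the resulting thresholds) match the paper's proof, but the step that carries the actual content --- showing that a \emph{cross-family} inclusion is a homotopy equivalence --- is not justified in your write-up. The key property quoted before the Fact applies only to inclusions within a single privileged family; it says nothing about the mixed inclusions $Q_{\sigma_i}\cap X_\e\hookrightarrow P_{\alpha_i}\cap X_\e$. Neither of your proposed fixes closes this: comparing each family to the family of balls is circular, since ``a privileged family and the ball family compute the same Milnor fiber'' is itself an instance of the Fact being proved; and the 2-out-of-3 step, as you state it, has only one of the two hypotheses it needs --- you know the composite $P_{\alpha_2}\cap X_\e\hookrightarrow P_{\alpha_1}\cap X_\e$ is an equivalence, but knowing that $g\circ f$ is a homotopy equivalence does not by itself make either $f$ or $g$ one.

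The missing idea, which is the heart of the paper's short proof, is to use \emph{both} within-family equivalences on the four-term chain to produce one-sided homotopy inverses for the \emph{middle} mixed inclusion. In your notation, write $m: Q_{\sigma_2}\cap X_\e\hookrightarrow P_{\alpha_2}\cap X_\e$, $j: P_{\alpha_2}\cap X_\e\hookrightarrow Q_{\sigma_1}\cap X_\e$, $\iota: Q_{\sigma_1}\cap X_\e\hookrightarrow P_{\alpha_1}\cap X_\e$. The within-$P$ equivalence $P_{\alpha_2}\cap X_\e\hookrightarrow P_{\alpha_1}\cap X_\e$ factors as $\iota\circ j$ and hence gives $j$ a left homotopy inverse $L$, while the within-$Q$ equivalence $Q_{\sigma_2}\cap X_\e\hookrightarrow Q_{\sigma_1}\cap X_\e$ factors as $j\circ m$ and gives $j$ a right homotopy inverse $R$; then $L\simeq L(jR)\simeq(Lj)R\simeq R$ shows $j$ is a homotopy equivalence. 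This is exactly the paper's argument, applied there to the middle inclusion $Q_1\cap X_\e\hookrightarrow P_2\cap X_\e$ of the chain $P_1\subset Q_1\subset P_2\subset Q_2$. Once $j$ is known to be an equivalence, your 2-out-of-3 step does legitimately upgrade $\iota$ to an equivalence, and the rest of your argument goes through.
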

\begin{proof}
  Let $\{P_\alpha\},\{Q_\alpha\}$ be two privileged families around
  $x$. Choose sufficiently small
  $P_1\subset Q_1\subset P_2\subset Q_2$ from the two families. Then
  for any sufficiently small $\e\neq0$ the inclusions
 \begin{equation}
   \begin{split}
     P_1\cap X_\e\hookrightarrow P_2\cap X_\e \\
     Q_1\cap X_\e\hookrightarrow Q_2\cap X_\e
   \end{split}
  \end{equation}
  are homotopy equivalences. It follows that the inclusion map
  \begin{equation}
    Q_1\cap X_\e\hookrightarrow P_2\cap X_\e
  \end{equation}
  admits a right inverse $R$ and a left inverse $L$. Then
  $L\simeq L\cdot(\iota R)\simeq R$, so $\iota$ is a homotopy
  equivalence.
\end{proof}

We now present a result of L\^e~\cite{le:monodromy} explaining how the
Milnor fiber of a deformation is obtained from the Milnor fiber of a
hyperplane section by gluing cells corresponding to certain critical
points (cf. also~\cite[Proposition 2]{gabrielov:mult}). This may be
seen as a local complex analog of classical Morse theory. We present
only a special case which will suffice for our purposes.

\begin{Thm} \label{thm:le-gluing}
  Let $(X,x)$ be a deformation, and assume that in a
  neighborhood of $x$, the fibers $X_\e, \e\neq0$ are smooth. Let
  $\ell$ be an affine form satisfying $\ell(x)=0$ and denote
  $Y:=X\cap\ell^{-1}(0)$.

  Then, for $\ell$ sufficiently generic, we have
  \begin{enumerate}
  \item In a sufficiently small neighborhood of $x$, the fibers
    $Y_\e,\e\neq0$ are smooth.
  \item In a sufficiently small neighborhood of $x$, $\ell\rest{X_\e}$
    admits only isolated critical points for any $\e\neq0$.
  \item The Milnor fiber of $(X,x)$ is obtained from the Milnor
    fiber of $(Y,x)$ by attaching $\mu$ cells of dimension
    $\dim_\C X_\e$, where $\mu$ is the number of critical points of
    $\ell\rest{X_\e}$ converging to $x$ as $\e\to0$.
  \end{enumerate}
\end{Thm}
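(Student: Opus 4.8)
The plan is to reduce Theorem~\ref{thm:le-gluing} to a local complex Morse-theoretic deformation argument, treating the affine form $\ell$ as the Morse function on the smooth fibers $X_\e$. First I would fix a privileged family of neighborhoods (say the balls $B_\delta(x)$, using Fact~\ref{fact:priv-family} to know the Milnor fiber is well-defined), and choose $\delta\gg\e>0$ in the standard L\^e range. The assertions (1) and (2) are the transversality/genericity statements: restricting $\ell$ to the smooth manifold $X_\e$, a generic linear form is a Morse function with isolated nondegenerate critical points, and $\ell^{-1}(0)\cap X_\e$ is smooth of one dimension less. I would make this precise by a Bertini/Sard argument applied to the family of affine forms parametrized by $(\mathbb{P}^N)^*$: for $\ell$ outside a proper subvariety the needed genericity holds for all small $\e\neq 0$ simultaneously (this uniformity over $\e$ is where one invokes the compactness/semicontinuity machinery referenced in the appendix, or an explicit stratification of the total space $X$). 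The content of part (3) is then the gluing statement.

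For part (3) the key step is to compare the Milnor fiber of $(X,x)$, which by definition is $B_\delta(x)\cap X_\e$, with the Milnor fiber of $(Y,x)=B_\delta(x)\cap Y_\e = B_\delta(x)\cap X_\e\cap\{\ell=0\}$, using the real-valued function $|\ell|^2$ (or, more carefully, the pair of functions coming from writing $\ell$ as a map to $\mathbb{C}$) as a Morse function on the manifold-with-boundary $B_\delta(x)\cap X_\e$. The picture is: $B_\delta(x)\cap X_\e$ deformation retracts onto the union of $\{|\ell|\le\eta\}\cap B_\delta(x)\cap X_\e$ (a tubular neighborhood of $Y_\e$, which retracts to $Y_\e\cap B_\delta(x)$, the Milnor fiber of $Y$) together with handles attached along the critical points of $\ell|_{X_\e}$ lying in $B_\delta(x)$. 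Because $\ell$ is complex-valued and the critical points are complex-nondegenerate, each such critical point contributes a single handle of real index equal to the complex dimension $\dim_\C X_\e$ — this is the standard fact that a complex Morse singularity of a holomorphic function on an $m$-dimensional complex manifold looks like $z_1^2+\cdots+z_m^2$ and its local Milnor fiber is homotopy equivalent to a sphere $S^{m}$, i.e. a single $m$-cell attached to a point. I would cite L\^e~\cite{le:monodromy} for the global assembly of these local contributions into the cell-attachment statement, and note the parallel with~\cite[Proposition 2]{gabrielov:mult}.

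The remaining bookkeeping is to identify $\mu$ correctly: one must check that only the critical points of $\ell|_{X_\e}$ that converge to $x$ as $\e\to 0$ are relevant, i.e. that critical points staying a bounded distance away from $x$ lie outside $B_\delta(x)$ for $\e$ small, and conversely that all critical points inside $B_\delta(x)$ do converge to $x$. This follows from the fact that $\ell|_{X_0}$ has no critical points near $x$ other than possibly $x$ itself for generic $\ell$ (a curve-selection or \L ojasiewicz-type argument on the closed set of critical points of $\ell|_X$ restricted over the $e$-axis), combined with properness of the projection $e$ on $X\cap B_\delta(x)\times\{|\e|\le\e_0\}$.

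The main obstacle I expect is not the Morse-theoretic heart of part (3)—which is classical and can largely be quoted from L\^e—but rather the \emph{uniformity in $\e$} needed to make parts (1) and (2) hold for a \emph{single} generic $\ell$ for all sufficiently small $\e\neq 0$, as opposed to a possibly $\e$-dependent choice. Handling this cleanly requires stratifying the total space $X$ and choosing $\ell$ generic with respect to all strata at once, or equivalently invoking the general compactness principle deferred to the appendix; getting the quantifiers in the right order ("$\exists$ generic $\ell$ such that $\exists\e_0$ such that $\forall\,0<|\e|<\e_0$ \dots") is the delicate point that I would spend the most care on.
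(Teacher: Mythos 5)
Your proposal follows essentially the same route as the paper: parts (1)--(2) are dispatched by a Bertini-type genericity argument (the paper simply cites L\^e for these), and part (3) is proved by interpolating between the Milnor fiber of $(Y,x)$ and that of $(X,x)$ through the sublevel sets of $\abs{\ell}$, with each complex-nondegenerate critical point of $\ell\rest{X_\e}$ contributing one cell of real dimension $\dim_\C X_\e$; the paper handles the neighborhood bookkeeping (your balls-plus-tubular-neighborhood step and the boundary issues) by invoking L\^e's privileged polydiscs adapted to $\Phi=(\ell,e)$, which is only a cosmetic difference from your setup. One small slip: the local Milnor fiber of $z_1^2+\cdots+z_m^2$ is homotopy equivalent to $S^{m-1}$, not $S^{m}$, but the fact you actually use --- that crossing such a critical value attaches a single cell of real index $m=\dim_\C X_\e$ --- is the correct one, so nothing in your argument breaks.
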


\begin{proof}
  The first two claims follow by a Bertini type argument which we omit
  (see~\cite[Lemma 2.2 and Remark 2.3]{le:milnor-fiber}). We continue with
  the proof of the third claim.
  
  We may assume that $\ell$ is just the first coordinate. Let
  $\Phi=(\ell,e):X\to\C^2$. According to~\cite[Theorem
  2.4]{le:milnor-fiber} for sufficiently generic $\ell$ there is a
  privileged family of polydiscs $P_\alpha=D_\alpha\times P'_\alpha$
  for $(X,x)$ such that
  \begin{equation}
    \begin{split}
      F=:P_\alpha\cap\Phi^{-1}(D_\alpha\times\{\e\}) &\text{ is the Milnor fiber of } (X,x)  \\
      F_0:=P_\alpha\cap\Phi^{-1}(\{0\}\times\{\e\}) &\text{ is the Milnor fiber of } (Y,x)
    \end{split}
  \end{equation}
  for sufficiently small $P_\alpha$ and (even smaller) $\e$. Now,
  since $X_\e$ is smooth by assumption, it remains only to interpolate
  these two spaces by
  \begin{equation}
    F_r = F \cap \abs{\ell}^{-1}([0,r])
  \end{equation}
  where $r$ runs from $0$ to the radius of $D_\alpha$. Since all
  critical points of $\ell$ are simple complex by assumption, all
  critical points of $\abs{\ell}$ are Morse of index $\dim_\C X_\e$.
  By classical Morse theory, whenever $r$ crosses the absolute value
  of a critical value of $\ell\rest F$, $F_r^+$ is glued with a cell
  of dimension $\dim_\C X_\e$. This concludes the proof.
\end{proof}

Theorem~\ref{thm:le-gluing} allows one to compute a cellular
decomposition for the Milnor fiber using induction on dimension. It
motivates the following definitions. Let $(X,x)$ be a deformation.
Assume that in a neighborhood of $x$, the fiber $X_\e,\e\neq0$ is
smooth. Fix generic functionals
\begin{equation} \label{eq:vell-def}
  \vell=(\ell_1,\ldots,\ell_{\dim_\C X_0}), \qquad \ell_i\in M^*
\end{equation}
For $k=1,\ldots,\dim_\C X_0$ we denote by $\las(x)$ the affine
space
\begin{equation} \label{eq:las-def}
  \las(x) := \ell_1^{-1}(\ell_1(x))\cap\cdots\cap\ell_k^{-1}(\ell_k(x)).
\end{equation}
We note that under the generic assumption that $e$ does not belong to
the span of $\vell$, the maps $e:\las(x)\to\C$ are flat. As usual
we denote by $\las(x)_\e:=\las(x)\cap e^{-1}(\e)$ the
corresponding fibers.

\begin{Def}\label{def:polar}
  For $k=1,\ldots,\dim_\C X_0+1$, we define the $k$-th \emph{polar
    variety} of $X$, denoted $\pv(X)$, as follows
  \begin{equation}
    \pv(X) = \clo\left[ \{\d\ell_1\wedge\cdots\d\ell_k\wedge\d e\rest X=0\}\setminus\{e=0\}  \right]
  \end{equation}
  where $\clo$ denotes analytic closure. In a neighborhood of $x$
  where all fibers $X_\e,\e\neq0$ are smooth, this may be stated as
  follows: we define $\pv(X)$ as the locus where
  $\d\ell_1,\ldots,\d\ell_k$ are linearly dependent on the tangent
  space of the fiber $X_\e$ for $\e\neq0$, and complete this to a flat
  family over $e=0$.

  For any $\e$, we denote by $\pv(X)_\e$ the analytic cycle
  in $\C^N\times\{\e\}$ as defined in section~\secref{sec:cycles}.
\end{Def}

Recall that an analytic stratification $\{Z_\alpha\}$ of an analytic
subset of $X_0$ is said to satisfy \emph{Thom's $A_e$} condition if
the following condition holds: for any sequence of points $x_i\in X$
converging to a point $x'\in Z_\alpha$, if the sequence of tangent
spaces $T_{x_i}X_{e(x)}$ converges to a limit $T$, then
$T_{x'}Z_\alpha\subset T$. Such stratifications always exist by a
result of Hironaka~\cite{hironaka:startifications}.

The following two propositions establish the key properties of
$\las(x)$ and $\pv(X)$, and their relation to the cellular structure
of the Milnor fiber. Polar varieties have been used for such purposes
extensively in the literature
(see~\cite{le:polar-curves,massey:numerical} for a survey), and the
ideas for the proofs are standard. However, we are not aware of a
suitable reference in this generality, and we therefore present full
proofs.

\begin{Prop} \label{prop:pv-proper}
  Let $(X,x)$ be a deformation, and assume that in a neighborhood of
  $x$, the fibers $X_\e, \e\neq0$ are smooth. There exists a
  neighborhood of $x$ and generic $\vell$ such that for every
  $k=1,\ldots,\dim_\C X_0+1$ and sufficiently small $\e$ (including
  zero) we have:
  \begin{enumerate}
  \item $\pv(X)_\e$ has pure dimension $k-1$
  \item $\pv(X)_\e$ intersects $\las[k-1](x)_\e$ properly (i.e.
    at isolated points).
  \end{enumerate}
\end{Prop}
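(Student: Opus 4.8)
The plan is to realise the relative polar variety, over the smooth locus $X\setminus X_0$, as a determinantal subvariety, bound its dimension by a parametric transversality count, and then analyse its flat limit over $\{e=0\}$ by means of the Thom $A_e$ condition. Write $m:=\dim_\C X_0$; since $e\rest X$ is flat near $x$ and the fibres $X_\e$ $(\e\ne0)$ are smooth of dimension $m$ there, $X\setminus X_0$ is smooth of dimension $m+1$ near $x$, and I would fix, once and for all and \emph{before} choosing $\vell$, a Hironaka stratification $\{Z_\alpha\}$ of $X_0$ near $x$ satisfying the $A_e$ condition relative to $X$. It is enough to treat $1\le k\le m$ (for $k=m+1$ one has $\pv(X)=X$, $\pv(X)_\e=X_\e$, and $X_\e\cap\las[m](x)$ is a generic transverse slice). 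For assertion (1): on $X\setminus X_0$ the zero locus of $\d\ell_1\wedge\cdots\wedge\d\ell_k\wedge\d e\rest X$ is the locus where a $(k+1)\times(m+1)$ matrix of holomorphic functions — the components, in a local coframe of $X\setminus X_0$, of $\d\ell_1,\dots,\d\ell_k,\d e$ — has rank $\le k$; it is therefore determinantal, so wherever it is nonempty its codimension is $\le m+1-k$, with equality forcing the local ring to be Cohen--Macaulay. A parametric count — for fixed $y$ the set of tuples in $(M^*)^k$ with $\ell_1,\dots,\ell_k,\d e$ dependent on $T_yX$ has codimension $m+1-k$, so the incidence variety over $(X\setminus X_0)\times(M^*)^k$ has dimension $\dim(M^*)^k+k$ — shows that for generic $\vell$ this locus has dimension $\le k$ at every point. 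Combining, it is Cohen--Macaulay of pure dimension $k$ (or empty), and its closure $\pv(X)$ is reduced of pure dimension $k$ with no component inside $\{e=0\}$; hence $e\rest{\pv(X)}$ is flat near $x$, every scheme-theoretic fibre $\pv(X)_\e$ (including $\e=0$) has pure dimension $k-1$, and for $\e\ne0$ it is the classical $k$-th polar variety of $X_\e$. This proves (1).

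For assertion (2) I would induct on $k$, strengthening the claim to: $\pv(X)\cap\las[k-1](x)$ is finite over the $\e$-line near $x$ (the case $k=1$, where $\las[0](x)=M$, being part (1) for $k=1$). First take $\e\ne0$. The engine is the pointwise identity, valid near $x$,
\[
  \pv(X)_\e\cap\las[k-1](x)=\crit\!\bigl(\ell_k\rest{X_\e\cap\las[k-1](x)}\bigr)\cup\bigl(\pv[k-1](X)_\e\cap\las[k-2](x)\bigr),
\]
which holds because a point $y$ of the left side with $y\notin\pv[k-1](X)_\e$ has $\d\ell_1,\dots,\d\ell_{k-1}$ independent on $T_yX_\e$, so $\las[k-1](x)$ is transverse to $X_\e$ at $y$ with $T_y(X_\e\cap\las[k-1](x))=T_yX_\e\cap\bigcap_{i<k}\ker\d\ell_i$, and the polar condition then says precisely that $\d\ell_k$ vanishes on this subspace; the remaining points lie in the second set since $\las[k-1](x)\subseteq\las[k-2](x)$. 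The second set is finite by the inductive hypothesis. For the first, I would apply the conclusions of part (1) to the deformation $X\cap\las[k-1](x)$ (which has smooth fibres over $\{e\ne0\}$ once $\ell_1,\dots,\ell_{k-1}$ are generic) with $k$ replaced by $1$ and $\ell_k$ in the role of the linear form: its ``$\pv[1]$'' is flat over the $\e$-line with zero-dimensional fibres near $x$, and its fibre over $\e\ne0$ is exactly $\crit(\ell_k\rest{X_\e\cap\las[k-1](x)})$. Choosing $\ell_k$ generic \emph{after} $\ell_1,\dots,\ell_{k-1}$ then gives this finiteness for all small $\e\ne0$, uniformly.

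The case $\e=0$ is the heart of the matter, and this is where the $A_e$ condition enters. Since $\pv(X)$ is the closure of its part over $\{e\ne0\}$, any $y_0\in\pv(X)_0$ near $x$ is a limit $y_0=\lim_jy_j$ with $y_j\in\pv(X)_{\e_j}$ and $0\ne\e_j\to0$; passing to a subsequence along which $T_{y_j}X_{\e_j}$ converges to an $m$-plane $T$, the $A_e$ condition gives $T_{y_0}Z_\alpha\subseteq T$ for the stratum $Z_\alpha\ni y_0$, and the polar condition — closed on the Grassmannian — passes to the limit to give $\d\ell_1,\dots,\d\ell_k$ dependent on $T$. Suppose moreover $y_0\in\las[k-1](x)$. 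For generic $\vell$, $\las[k-1](x)$ is transverse near $x$ to every stratum $Z_\alpha$ of dimension $\ge k-1$ and meets every lower-dimensional one in a finite set near $x$; in the latter case there is nothing to prove, so assume $\dim Z_\alpha\ge k-1$, whence $\d\ell_1,\dots,\d\ell_{k-1}$ are independent on $T_{y_0}Z_\alpha$, and remain so on the larger space $T$. Then dependence of $\d\ell_1,\dots,\d\ell_k$ on $T$ forces $\d\ell_k\rest T$ to be a combination of $\d\ell_1\rest T,\dots,\d\ell_{k-1}\rest T$; evaluating that relation on any vector of $T_{y_0}Z_\alpha\cap\bigcap_{i<k}\ker\d\ell_i=T_{y_0}(Z_\alpha\cap\las[k-1](x))$, where those forms vanish, forces $\d\ell_k$ to vanish there too, i.e.\ $y_0\in\crit(\ell_k\rest{Z_\alpha\cap\las[k-1](x)})$. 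With $\ell_1,\dots,\ell_{k-1}$ generic these slices are smooth, with $\ell_k$ generic (chosen afterwards) each critical locus is finite, and there are only finitely many strata, so $\pv(X)_0\cap\las[k-1](x)$ is finite near $x$, completing the induction. All the genericity asked of $\vell$ is finite in number and properly nested — $\ell_k$ is only required to be generic relative to the fixed stratification and to objects built from $\ell_1,\dots,\ell_{k-1}$ — so a single good $\vell$ and a single neighbourhood of $x$ serve all $k$ at once and there is no circularity; note in particular that I never need the ``second set'' above to be empty, only finite, which the induction supplies. I expect the genuinely delicate point to be this $\e=0$ analysis: checking that the flat limit of the polar variety is captured set-theoretically by limits of tangent planes of the $X_\e$, that the $A_e$ condition is being applied to the stratum through each limit point (hence uniformly across the finitely many strata), and that the linear-algebra step really produces $y_0$ as a stratified critical point; the $\e\ne0$ assertions are comparatively routine consequences of the determinantal-and-flatness machinery of part (1).
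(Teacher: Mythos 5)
Your overall strategy is sound, and it is organized genuinely differently from the paper's proof. The paper establishes one stronger local statement --- that near $x$ the set $\pv(X)_0\cap\las[k-1](x)_0$ consists of the single point $x$ --- by a contradiction argument: it builds a \emph{nested} chain of Thom-$A_e$ stratifications of the successive slices $X_0\cap\las[p-1](x)_0$, picks each $\ell_p$ transverse to the strata of the $p$-th stratification, and shows that a putative limit point $x'\neq x$ would produce vectors $v_1,\ldots,v_k$ in the limiting plane $T$ with $(\d\ell_p(v_q))$ upper triangular and invertible, contradicting the polar condition; both the dimension upper bound in (1) and statement (2) for small $\e$ then follow by continuity. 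You instead prove (1) directly by the determinantal lower bound plus a parametric-transversality upper bound over the smooth locus (a clean alternative, and your deduction that the fibers of the closure, including $\e=0$, are pure of dimension $k-1$ is correct), and you prove (2) by induction on $k$, splitting $\pv(X)_\e\cap\las[k-1](x)$ into critical points of $\ell_k$ on the smooth slice plus $\pv[k-1](X)_\e\cap\las[k-2](x)$, with a single $A_e$ stratification of $X_0$ handling the limit $\e\to0$. The $\e\neq0$ part and the reduction of a limit point $y_0$ to a stratified critical point of $\ell_k$ on $Z_\alpha\cap\las[k-1](x)$ are correct.

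The one place where your argument leans on an assertion that carries real content is the endgame at $\e=0$: ``$\las[k-1](x)$ is transverse near $x$ to every stratum of dimension $\ge k-1$'' and ``with $\ell_k$ generic each critical locus is finite.'' The strata and their slices are only locally closed, with $x$ in their boundary, so the generic dimension count for $\crit(\ell_k\rest{Z_\alpha\cap\las[k-1](x)})$ gives a zero-dimensional set but does \emph{not} by itself exclude critical points (or intersection points with lower-dimensional strata) accumulating at $x$; and it is precisely properness \emph{at} $x$ that is used downstream in Proposition~\ref{prop:polar-cells} to define $i(x;\pv(X)_0\cdot\las[k-1](x)_0;\C^N)$. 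Ruling out such accumulation requires an additional limiting argument --- e.g.\ that the fiber over $x$ of the closure of the relevant conormal/secant incidence variety is a proper subvariety of covectors (resp.\ of flags), so a generic $\ell_k$ (resp.\ $\vell$) avoids all limits coming from points approaching $x$ --- or else the paper's device, which sidesteps any counting by excluding every nearby point $x'\neq x$ outright. This fact is true and standard in polar-variety theory, so I would not call your route wrong, but as written it is the genuinely unproven step, and it is not among the delicacies you listed at the end; make it explicit (and note that the neighborhood of $x$ is then allowed to depend on $\vell$, which the statement permits).
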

\begin{proof}
  We describe the choice of generic $\vell$. Let $\{Z^1_\alpha\}$ be
  a stratification of $X_0$ satisfying Thom's $A_e$ condition. We may
  restrict to an open neighborhood of $x$ where the only zero-dimensional
  strata (if any) is $\{x\}$. Let $\ell_1$ be transversal to all the
  other strata in $\{Z^1_\alpha\}$.

  Consider now a stratification $\{Z^2_\alpha\}$ of $X_0\cap \las[1](x)_0$
  refining $\{Z^1_\alpha\}$. Once again, we may restrict to an open neighborhood
  of $x$ where the only zero-dimensional strata (if any) is $\{x\}$. Let $\ell_2$
  be transversal to all the other strata in $\{Z^2_\alpha\}$.

  Continuing in this fashion we obtain for $k=1,\ldots,\dim_\C X_0+1$
  stratifications $\{Z^k_\alpha\}$ of $X_0\cap \las[k-1](x)_0$, and
  functional $\ell_k$ transversal to all strata in $\{Z^k_\alpha\}$
  except perhaps $\{x\}$. Note that these stratifications all satisfy
  Thom's $A_e$ condition (since the condition is preserved under
  refinement). Denote by $U$ the open neighborhood of $x$ in which
  all the stratifications were constructed.

  We now proceed with the proof. First note that for $\e\neq0$,
  $\pv(X)_\e$ is a determinantal variety given by
  \begin{equation}
    \pv(X)_\e = \d\ell_1\wedge\cdots\wedge\d\ell_k \rest{X_\e}=0
  \end{equation}
  and as such, each of its components has dimension at least $k-1$.
  Since $\pv(X)_0$ is obtained by a flat limit, the same is
  true for it.

  The other direction of (1), as well as (2), will be proved by
  continuity once we show that $\pv(X)_0$ intersects
  $\las[k-1](x)_0$ properly. More specifically, we will show that in $U$
  this intersection contains only $x$.

  Assume to the contrary that
  \begin{equation}
    x' \in \pv(X)_0 \cap \las[k-1](x)_0 \cap U, \qquad x'\neq x.
  \end{equation}
  Since $\pv(X)_0$ is defined by a flat limit, there exists
  a sequence $x_i\in\pv(X)_{e(x_i)}$ with $e(x_i)\neq0$
  and $x_i\to x'$. By compactness of the Grassmannian we may assume
  that $T_{x_i}X_{e(x_i)}$ converges to a limit $T$.

  For $p=1,\ldots,k$ denote by $Z^p_0$ the strata in $\{Z^k_\alpha\}$
  containing $x'$. By construction there exists a vector $v_p\in T_{x'}Z^p_0$
  such that $\d\ell_p(v_p)=1$. Since $Z^p_0$ is an analytic subset of
  $X_0\cap \las[p-1](x)_0$, we have $\d\ell_q(v_p)=0$ for $q<p$. It follows that
  the matrix $(\d\ell_p(v_q))_{p,q=1\ldots k}$ is upper triangular with determinant 1.

  By Thom's $A_e$ condition for each of the stratifications
  $\{Z^k_\alpha\}$, we have that $v_1,\ldots,v_k\in T$. Thus
  eventually $\d\ell_1,\ldots,\d\ell_k$ become linearly independent on
  $T_{x_i}X_{e(x_i)}$, contradicting our assumption that
  $x_i\in\pv(X)_{e(x_i)}$.
\end{proof}

Finally, we present a proposition expressing the cellular structure
of a Milnor fiber in terms of the polar varieties $\pv(X)$
and their intersections with $\las(x)$.

\begin{Prop}\label{prop:polar-cells}
  Let $(X,x)$ be a deformation, and assume that in a neighborhood of
  $x$, the fibers $X_\e, \e\neq0$ are smooth of dimension
  $d:=\dim_\C X_\e$. For any sufficiently generic $\vell$, the Milnor
  fiber of $(X,x)$ admits a cellular structure where the number of
  $k$-cells, denoted $c_q$, is given by
  \begin{equation}\label{eq:polar-cells-formula}
    c_{d+1-k} = i(x;\pv(X)_0\cdot \las[k-1](x)_0; \C^N).
  \end{equation}
  Thus, the $q$-th Betti number of the Milnor fiber is bounded
  by $c_q$. In particular the $q$-th Betti number vanishes for $q>d$.
\end{Prop}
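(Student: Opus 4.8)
The plan is to prove the statement by induction on $d=\dim_\C X_\e$, using Theorem~\ref{thm:le-gluing} to peel off one dimension at a time and Proposition~\ref{prop:pv-proper} to guarantee that the relevant intersections are proper (so that the intersection numbers appearing in~\eqref{eq:polar-cells-formula} are well-defined). First I would choose $\vell$ generic enough that the conclusions of both Theorem~\ref{thm:le-gluing} (applied iteratively) and Proposition~\ref{prop:pv-proper} hold for every $k$ simultaneously; this is possible since each condition is generic and there are finitely many of them. Set $Y:=X\cap\ell_1^{-1}(\ell_1(x))$, which is (for generic $\ell_1$) again a deformation whose fibers $Y_\e$, $\e\neq0$ are smooth of dimension $d-1$, and whose ambient space is naturally $\las[1](x)\cong\C^{N-1}\times\C$. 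The key compatibility observation is that the polar varieties of $Y$ with respect to the restricted functionals $(\ell_2,\ldots,\ell_d)$ coincide with those of $X$ with respect to $(\ell_1,\ldots,\ell_k)$: indeed, $\d\ell_1,\ldots,\d\ell_k$ are dependent on $T X_\e$ at a point of $Y_\e$ precisely when $\d\ell_2,\ldots,\d\ell_k$ are dependent on $T Y_\e = T X_\e\cap\{\d\ell_1=0\}$ there, so $\pv[k](X)\cap\las[1](x)=\pv[k-1](Y)$ as sets, and one checks the intersection multiplicities match using~\eqref{eq:intersection-cont} (intersecting the flat family $\pv[k](X)$ with the constant family $\las[1](x)\times T$). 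Likewise $\las[k-1](x)$ inside $\C^N$ equals $\las[k-2]$ for $Y$ inside $\las[1](x)$ when one uses the functionals $\ell_2,\ldots,\ell_k$.

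With these identifications in hand the induction runs as follows. By Theorem~\ref{thm:le-gluing}(3), the Milnor fiber of $(X,x)$ is obtained from that of $(Y,x)$ by attaching $\mu$ cells of dimension $d=\dim_\C X_\e$, where $\mu$ is the number of critical points of $\ell_1\rest{X_\e}$ converging to $x$. A critical point of $\ell_1\rest{X_\e}$ is exactly a point where $\d\ell_1$ vanishes on $TX_\e$, i.e.\ (taking $k=1$ in Definition~\ref{def:polar}) a point of $\pv[1](X)_\e$; and "converging to $x$" together with conservation of numbers (Proposition~\ref{prop:pv-proper} gives that $\pv[1](X)_0$ meets $\las[0](x)_0=\C^N\times\{0\}$ properly, i.e.\ $\pv[1](X)_\e$ is a finite set of points) identifies $\mu$ with the multiplicity $i(x;\pv[1](X)_0\cdot\las[0](x)_0;\C^N)$, which is precisely $c_{d}=c_{d+1-1}$ in~\eqref{eq:polar-cells-formula}. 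So the top-dimensional cells are accounted for, and the remaining cells — those of dimension $\le d-1$ — form the Milnor fiber of $(Y,x)$, whose cell counts are given by the induction hypothesis applied to $(Y,x)$ with functionals $(\ell_2,\ldots,\ell_d)$: the number of $(d-k)$-cells of the Milnor fiber of $Y$ is $i(x;\pv[k-1](Y)_0\cdot\las[k-2](x)_0;\C^{N-1})$, which by the compatibility identifications equals $i(x;\pv[k](X)_0\cdot\las[k-1](x)_0;\C^N)$. Reindexing gives~\eqref{eq:polar-cells-formula} for all $k$. The base case $d=0$ is trivial: $X_\e$ is a finite set of reduced points, the Milnor fiber is a point (a single $0$-cell), and the formula reads $c_1 = i(x;\pv[1](X)_0\cdot\las[0](x)_0)$, which is the local degree of the flat family $X$ at $x$, namely the number of points of $X_\e$ converging to $x$ — and since $X_0$ is flat of dimension $0$ this is exactly the multiplicity of $x$ in $X_0$, agreeing with "one $0$-cell" when $x$ is a reduced point of $X_0$ (the case relevant to us; in general the Milnor fiber of a $0$-dimensional deformation is that many points and one again gets a match by the same conservation-of-numbers count). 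Finally, the statement about Betti numbers is immediate: any space with a CW structure having $c_k$ cells in dimension $k$ has $b_k\le c_k$ (the $k$-th cellular chain group has rank $c_k$), and since $c_k=0$ for $k>d+1$ — and in fact $c_{d+1}$ corresponds to $k=0$, with no cells in dimensions above $d$ arising from a $d$-dimensional smooth fiber, so $c_k=0$ for $k>d$ — the top nonvanishing Betti number is in degree $\le d$.

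The main obstacle is the compatibility step: verifying that the polar varieties and the linear spaces restrict correctly under passing from $(X,x)$ to $(Y,x)=(X\cap\las[1](x),x)$, \emph{as cycles with multiplicities}, not merely as sets. The set-theoretic statement is a straightforward linear-algebra computation with the dependency condition on tangent spaces, but to match intersection multiplicities one must argue that the flat family $\pv[k](X)\to T$ restricts to the flat family $\pv[k-1](Y)\to T$ — equivalently that forming the polar variety commutes with the hyperplane section $\ell_1=\ell_1(x)$ — and then invoke the continuity axiom~\eqref{eq:intersection-cont} to transport the equality of intersection numbers from nearby $\e$ (where everything is transverse and visibly equal) to $\e=0$. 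One must also be slightly careful that the genericity hypotheses on $\ell_1$ needed for Theorem~\ref{thm:le-gluing} are compatible with those needed for $\ell_1$ in Proposition~\ref{prop:pv-proper} and with the inductive requirement that $(Y,x)$ again be a deformation with smooth generic fibers and admissible functionals $(\ell_2,\ldots,\ell_d)$; this is handled by choosing the $\ell_i$ one at a time in the order $\ell_1,\ell_2,\dots$, exactly as in the proof of Proposition~\ref{prop:pv-proper}, each time avoiding the finitely many bad conditions imposed so far.
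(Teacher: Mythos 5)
Your argument runs on the same engine as the paper's proof: apply Theorem~\ref{thm:le-gluing} once per dimension, identify the attached cells with critical points, and convert the count of critical points converging to $x$ into an intersection multiplicity at $\e=0$ using the properness from Proposition~\ref{prop:pv-proper} together with the continuity of intersection numbers. The one real difference is organizational, and it is exactly where your write-up is incomplete: you pass to the sliced deformation $Y=X\cap\las[1](x)$ and re-form \emph{its} polar varieties with respect to $(\ell_2,\ldots,\ell_d)$, which forces the cycle-level compatibility $\pv[k-1](Y)=\pv(X)\cdot\las[1](x)$ that you rightly flag as the main obstacle but do not establish. As stated, your appeal to~\eqref{eq:intersection-cont} requires that $\pv(X)_\e$ meet $\las[1](x)_\e$ properly for all small $\e$ \emph{including} $\e=0$ (this is not among the stated conclusions of Proposition~\ref{prop:pv-proper}, though it can be extracted near $x$ from the properness of the intersection with $\las[k-1](x)_0$), and even granting that, one must still identify the resulting intersection cycle with the flat-limit family defining $\pv[k-1](Y)$ over $\e=0$; also, your set-theoretic equivalence breaks down at points where $\d\ell_1$ itself vanishes on $T X_\e$, so it only holds off lower polar loci. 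The paper's bookkeeping sidesteps all of this: at the $k$-th application of Theorem~\ref{thm:le-gluing} the attached cells are counted by critical points of $\ell_k\rest{X_\e\cap\las[k-1](x)}$, and such a critical point is by definition a point where $\d\ell_1,\ldots,\d\ell_k$ are linearly dependent on $T X_\e$, i.e.\ a point of $\pv(X)_\e\cap\las[k-1](x)_\e$ --- so every count is phrased from the start in terms of the polar varieties of $X$ itself, and a single appeal to Proposition~\ref{prop:pv-proper} plus continuity of intersection numbers gives~\eqref{eq:polar-cells-formula} with no comparison between polar varieties of $X$ and of its slices. If you keep your slice-by-slice induction you must supply the missing multiplicity comparison; alternatively, rephrase the critical-point count as above and the detour (and the gap) disappears.
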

\begin{proof}
  Applying Theorem~\ref{thm:le-gluing} inductively, we obtain a
  cellular decomposition for the Milnor fiber where $c_{d+1-k}$ is
  equal to the number of critical points of
  $\ell_k\rest{X_\e\cap \las[k-1](x)}$ converging to $x$ as as
  $\e\to0$, i.e. to the number of points in
  $\pv(X)_\e\cdot \las[k-1](x)_\e$ converging to $x$ as $\e\to0$.
  Since the intersection $\pv(X)_0\cdot \las[k-1](x)_0$ is proper at
  $x$ according to Proposition~\ref{prop:pv-proper}, we
  obtain~\eqref{eq:polar-cells-formula} by the continuity of
  intersection numbers.
\end{proof}

\subsection{Milnor fibers and multiplicities}
\label{sec:milnor-mult}

In this section we present the main ideas of~\cite{gabrielov:mult},
relating the multiplicity of an analytic function restricted to the
trajectory of an analytic vector field to the Euler characteristics of
the Milnor fibers of certain deformations.

Let $p\in\C^n$. Let $V$ be an analytic vector field and $P$ an
analytic function, both defined in a neighborhood of $p$. As
in~\secref{sec:le-milnor}, we consider the ambient space
$M=\C^n\times\C$. Finally consider an analytic deformation $P_e$ of
$P$, i.e.
\begin{equation}
  P_e(x_1,\ldots,x_n) \in \cO_{(0,p)}(e,x_1,\ldots,x_n) \qquad P_0(x)\equiv P(x).
\end{equation}
Following Gabrielov, we define for every $r\in\N$
\begin{equation}
  X^r = \clo\left[ \{ P_e = VP_e = \cdots = V^{r-1} P_e = 0 \} \setminus \{ e=0 \} \right]
\end{equation}
where $\clo$ denotes analytic closure. In other words, we define
$X^r$ by the vanishing of the first $r$ derivatives outside $e=0$
and complete this variety as a flat family over $e=0$. We denote
by $F_p^r$ the Milnor fiber of $(X^r,p)$.

Gabrielov's key insight is Theorem~\ref{thm:gab-main}, expressing the
multiplicity $\mult_p^V P$ in terms of the Euler characteristics of
the Milnor fibers defined above. The proof we present is based on an
idea of Khovanskii, and appeared in~\cite{gabrielov:mult}.

We remark that the presentation of this proof in~\cite{gabrielov:mult}
contains a small gap. A second, complete proof is also given
in~\cite{gabrielov:mult}. However, for the multi-dimensional
generalization developed in~\cite{gk:mult} it is necessary to use the
former proof. We therefore pause to present a lemma making the
argument precise.

The following lemma was suggested to the author by David B. Massey.
In~\cite{ks:sheaves}, this lemma is presented in the general context
of constructible sheaves, and is stated as a result concerning
cohomology. We give below a simplified formulation in our more
elementary context, and a proof sketch (adapted
from~\cite{ks:sheaves}) in the homotopic category.

\begin{Lem}[\protect{\cite[Lemma 8.4.7]{ks:sheaves}}] \label{lem:milnor-phi}
  In the notations of Definition~\ref{def:milnor-fiber}, let $\phi:X\to\R_{\ge0}$
  be a nonnegative real analytic function. Suppose that $\phi$ is proper in a
  neighborhood of $x$ and that and $\phi^{-1}(0)=\{x\}$.

  Then the Milnor fiber of $(X,x)$ is homotopy equivalent to
  $\phi^{-1}(B_\delta)\cap X_\e$ for sufficiently small
  $0\neq\e\ll\delta\ll1$.
\end{Lem}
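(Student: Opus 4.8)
The plan is to show that the ball $B_\delta(x)$ in Definition~\ref{def:milnor-fiber} and the set $\phi^{-1}(B_\delta)$ form, in a suitable sense, interchangeable families of privileged neighborhoods, so that the result follows from Fact~\ref{fact:priv-family}. The key geometric input is that both families are cofinal in each other: because $\phi$ is proper near $x$ with $\phi^{-1}(0)=\{x\}$, for every $\delta'>0$ there is $\eta>0$ with $\phi^{-1}(B_\eta)\subset B_{\delta'}(x)$, and conversely, since $\phi$ is continuous and $\phi(x)=0$, for every $\eta>0$ there is $\delta'>0$ with $B_{\delta'}(x)\subset\phi^{-1}(B_\eta)$. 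Thus one can interleave a nested chain $B_{\delta_1}(x)\subset\phi^{-1}(B_{\eta_1})\subset B_{\delta_2}(x)\subset\phi^{-1}(B_{\eta_2})$ with all radii small.

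First I would invoke the local conic structure / curve-selection machinery (as in L\^e's work, quoted around Definition~\ref{def:milnor-fiber}) to record the one property we actually need: for the family $B_\delta(x)$, and for any $\delta'<\delta$ both sufficiently small, the inclusion $B_{\delta'}(x)\cap X_\e\hookrightarrow B_{\delta}(x)\cap X_\e$ is a homotopy equivalence once $0\neq\e\ll\delta'$. This is exactly the privileged-family property already cited in the excerpt. Next I would establish the analogous statement for the $\phi$-sublevel family: for $\eta'<\eta$ small, $\phi^{-1}(B_{\eta'})\cap X_\e\hookrightarrow\phi^{-1}(B_{\eta})\cap X_\e$ is a homotopy equivalence for $0\neq\e\ll\eta'$. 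The cleanest way is not to reprove privileged-family theory from scratch but to deduce this from the ball statement by the cofinality sandwich: choose $\delta_1$ with $B_{\delta_1}(x)\subset\phi^{-1}(B_{\eta'})$, then $\eta_0$ with $\phi^{-1}(B_{\eta_0})\subset B_{\delta_1}(x)$, so that $\phi^{-1}(B_{\eta_0})\hookrightarrow\phi^{-1}(B_{\eta'})$ factors through the ball; iterating the factorization and using the two-out-of-three / left-and-right-inverse argument already used verbatim in the proof of Fact~\ref{fact:priv-family} (if a composite $A\to B\to C\to D$ has the outer maps homotopy equivalences, the middle one is too), one concludes both that the $\phi$-family has the privileged property and that $\phi^{-1}(B_\delta)\cap X_\e$ has the homotopy type of the Milnor fiber.

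The one point requiring a little care — and the step I expect to be the main obstacle — is the quantifier ordering in ``for sufficiently small $0\neq\e\ll\delta\ll1$'': each homotopy equivalence above holds only for $\e$ below a threshold that depends on the chosen radii, and one must check that finitely many nested inclusions are used, so that a single common threshold $\e'$ works. Since the sandwich $B_{\delta_1}(x)\subset\phi^{-1}(B_{\eta_1})\subset B_{\delta_2}(x)\subset\phi^{-1}(B_{\eta_2})$ involves only four sets, only finitely many thresholds arise and we simply take their minimum; this is the same bookkeeping as in Fact~\ref{fact:priv-family}. Everything else — properness guaranteeing that $\phi^{-1}(B_\eta)$ is a genuine compact neighborhood of $x$ for small $\eta$, and $\phi^{-1}(0)=\{x\}$ guaranteeing $\bigcap_\eta\phi^{-1}(B_\eta)=\{x\}$ so the family shrinks to $x$ — is immediate from the hypotheses. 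I would then simply note that, having shown $\{\phi^{-1}(B_\eta)\}$ is a privileged family in the sense needed, Fact~\ref{fact:priv-family} delivers the claim.
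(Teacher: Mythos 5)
Your cofinality observations are correct, but the central step---deducing that the inclusions $\phi^{-1}(B_{\eta'})\cap X_\e\hookrightarrow\phi^{-1}(B_{\eta})\cap X_\e$ are homotopy equivalences from the privileged property of the \emph{balls alone}, via the sandwich---does not work, and the principle you invoke is false as stated. If $A\subset B\subset C\subset D$ and the outer inclusions $A\subset B$, $C\subset D$ are homotopy equivalences, the middle one need not be (take $A=B$ a point and $C=D$ a circle). Even in your interleaved chain $B_{\delta_1}\subset\phi^{-1}(B_{\eta_1})\subset B_{\delta_2}\subset\phi^{-1}(B_{\eta_2})\subset B_{\delta_3}$, knowing that all ball-to-ball inclusions are equivalences does not force the $\phi$-to-$\phi$ (or cross) inclusions to be equivalences: a point inside a circle inside a disk inside a larger disk shows that a set sandwiched between homotopy equivalences can carry extra topology that dies in the ambient ball. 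The argument actually used in the proof of Fact~\ref{fact:priv-family} is the two-out-of-six property: to conclude that the middle map of $A\to B\to C\to D$ is an equivalence one needs \emph{both} overlapping composites $A\to C$ and $B\to D$ to be equivalences, i.e.\ both families must already be known to have the stabilization property. With only the ball family known to be privileged, the stabilization statement for the $\phi$-sublevel family is exactly what remains to be proved, and it cannot be obtained by formal homotopy bookkeeping.

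This missing step is precisely where the paper (following Kashiwara--Schapira) supplies genuine geometric input: fix a stratification of $X$ refining one of $X_0$ and satisfying the Whitney~B and Thom $A_e$ conditions; a Bertini--Sard type argument shows that $\phi$ has a discrete set of stratified critical values, so for small $\delta<\delta'$ there are none in $[\delta,\delta']$; Thom's $A_e$ condition transfers the absence of critical values to the restrictions $\phi\rest{X_\e}$ for small $\e$; and the stratified Morse lemma then yields that $\phi^{-1}(B_\delta)\cap X_\e\hookrightarrow\phi^{-1}(B_{\delta'})\cap X_\e$ is a homotopy equivalence, with the quantifier order $0\neq\e\ll\delta\ll1$ coming out of this analysis rather than from finiteness bookkeeping. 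Only after this is established does the interleaving comparison with the ball family (which you describe correctly in its valid form) deliver the lemma; the Morse-theoretic step you hoped to bypass is the actual content of the statement.
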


With $\phi$ given by the squared-distance to $x$ one obtains the usual
expression for the Milnor fiber.

\begin{proof}[Proof sketch]
  Fix a stratification of $X$ refining a stratification of
  $X_0$, which satisfies the Whitney B condition as well as
  Thom's $A_e$ condition.

  By an argument of Bertini-Sard type~\cite[Lemma 8.4.7]{ks:sheaves},
  one checks that $\phi$ has a discrete set of stratified critical
  values. In particular, for sufficiently small $\delta<\delta'$,
  $\phi$ has no stratified critical values between $\delta$ and
  $\delta'$. Moreover, using Thom's $A_e$ condition one proves that
  for sufficiently small $\e$, the restriction of $\phi$ to $X_\e$
  also has no critical values between $\delta$ and $\delta'$.

  It follows from the stratified Morse lemma that there is a homotopy
  equivalence between $\phi^{-1}(B_\delta) \cap X_\e$ and
  $\phi^{-1}(B_{\delta'}) \cap X_\e$. One can complete the proof as
  the proof of Fact~\ref{fact:priv-family}.
\end{proof}

We are now ready to state Gabrielov's main theorem.

\begin{Thm}[\protect{\cite[Theorem 1]{gabrielov:mult}}]
  \label{thm:gab-main}
  Let $p\in\C^n$ be a non-singular point of $V$ and suppose that
  $\mult_p^V P$ is finite. Then
  \begin{equation} \label{eq:mult-sum-chi}
    \mult_p^V P = \sum_{r=1}^\infty \chi(F^r_p)
  \end{equation}
\end{Thm}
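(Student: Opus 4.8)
The idea is to choose a good deformation $P_e$ and track how the multiplicity is stored in the Euler characteristics of the Milnor fibers $F^r_p$. The natural choice is to take $P_e$ to be a generic (or at least sufficiently transverse) deformation so that for $e\neq 0$ near $0$ the varieties $X^r_e=\{P_e=VP_e=\cdots=V^{r-1}P_e=0\}$ are smooth of the expected dimension $n-r$ in a neighborhood of $p$ (and empty once $r>n$, at least locally), while $X^r_0$ is the non-reduced fiber cut out by the first $r$ Lie derivatives of $P$ along the unperturbed trajectory. The key point is that after straightening the vector field $V$ (possible since $p$ is non-singular, by the flow-box theorem) we may take $V=\partial/\partial x_1$; then $V^jP = \partial^j P/\partial x_1^j$, and the condition $\mult_p^V P = m$ says exactly that, along the trajectory $\gamma_p=\{x_2=\cdots=x_n=\text{const}\}$, the function $P$ vanishes to order $m$ at $p$. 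Thus $X^r_0$ contains $\gamma_p$ for $r\le m$ and does not for $r>m$.

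\smallskip

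\textbf{Key steps.} First, I would reduce to the flow-box coordinates and record that, for $r\le m$, $p$ is an \emph{interior} point of the analytic set $X^r_0$ in the sense that $X^r_0$ contains a curve (a piece of $\gamma_p$) through $p$; while for $r=m+1$, $p$ is an isolated point of $X^{m+1}_0$ (locally $X^{m+1}$ is a deformation with $X^{m+1}_0=\{p\}$), and for $r>m+1$ the fiber is empty near $p$. Second, and this is the heart of the matter, I would compute $\chi(F^r_p)$ in each range. For $r>m+1$ the Milnor fiber is empty so $\chi(F^r_p)=0$; for $r=m+1$, $X^{m+1}_0$ is a point, $X^{m+1}_e$ is a finite set of points for small $e\neq 0$, and the Milnor fiber is that finite set, so $\chi(F^{m+1}_p)$ equals the number of points of $X^{m+1}_e$ converging to $p$. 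By conservation of number (Section~\ref{sec:cycles}), this is the local intersection multiplicity of the equations $P_e=\cdots=V^mP_e=0$ at $p$ for the generic deformation, and a transversality/genericity argument shows this equals $m$ (one extra equation $V^mP_e$ generically passes through $p$ simply, cutting the one-dimensional $X^m$ in a point with the right multiplicity). For $1\le r\le m$ I claim $\chi(F^r_p)=0$: here $X^r_e$ is smooth of positive dimension and the Milnor fiber is the generic nearby fiber of a deformation whose special fiber contains a curve through $p$; one shows the Milnor fiber is contractible, or at least has zero Euler characteristic, because it retracts onto something built over the curve direction — concretely, using the flow-box structure, $X^r$ fibers over a punctured disc in the $(x_1,e)$-plane in a way that makes the Milnor fiber a (contractible) ball, or one argues via Proposition~\ref{prop:polar-cells} that all polar intersection numbers at $p$ vanish because $\las[k-1](p)_0$ misses the curve $\gamma_p$ generically for the relevant $k$. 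Summing, $\sum_{r\ge 1}\chi(F^r_p) = \underbrace{0+\cdots+0}_{r\le m} + m + 0 + \cdots = m = \mult_p^V P$.

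\smallskip

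\textbf{Main obstacle.} The delicate point is Step two in the range $1\le r\le m$: showing $\chi(F^r_p)=0$ for the intermediate fibers, i.e. that no "extra" topology accumulates at $p$ before the trajectory drops out of the variety. This requires a careful genericity argument — choosing the deformation $P_e$ and the linear forms $\vell$ compatibly so that the polar varieties $\pv(X^r)$ meet the slices $\las[k-1](p)_0$ only away from $p$, equivalently so that the trajectory direction $\partial/\partial x_1$ is not "seen" by the polar loci at $p$. Khovanskii's Euler-characteristic-integration viewpoint handles this cleanly: $\chi$ is additive and multiplicative over fibrations, and $X^r$ near $p$ fibers over the $e$-disc with fiber a piece of $\C$ (the $x_1$-line) times something, forcing $\chi(F^r_p)=\chi(\text{pt})\cdot\chi(\C^*)=0$ for $r\le m$. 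A secondary technical point is justifying that a \emph{sufficiently generic} deformation $P_e$ simultaneously achieves smoothness of all the $X^r_e$ ($r=1,\ldots,m+1$) near $p$ and the required properness; this is a finite intersection of Zariski-open genericity conditions (a Bertini-Sard argument as in the proof of Theorem~\ref{thm:le-gluing}), and the independence of the Milnor fiber on the deformation within such a generic class, together with Lemma~\ref{lem:milnor-phi}, lets one reduce to a convenient model deformation such as $P_e = P + e\cdot(\text{generic polynomial})$ or even the "Weierstrass" model $P_e = x_1^m + e$ in flow-box coordinates after factoring out the non-vanishing part of $P$ along $\gamma_p$.
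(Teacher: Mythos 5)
There is a genuine error at the heart of your Step two, in both directions. First, the fiber you want to carry all the weight is empty: by definition $\mult_p^V P=m$ means $V^mP(p)\neq 0$, and since $X^{m+1}_0$ is contained in the common zero set of $P,VP,\dots,V^{m}P$ (continuity of the flat limit), the point $p$ does not lie on $X^{m+1}_0$ at all; hence $F^{m+1}_p=\emptyset$ and $\chi(F^{m+1}_p)=0$, not $m$. (This is exactly the observation the paper uses to say $F^r_p=\emptyset$ for $r>\mu$.) Second, the vanishing claim $\chi(F^r_p)=0$ for $1\le r\le m$ is false, and your heuristic (``contractible, or at least $\chi=0$'' --- note these are already inconsistent, since a contractible fiber has $\chi=1$; and the alleged factor $\C^*$ with $\chi(\C^*)=0$ does not exist) cannot be repaired: in flow-box coordinates with $n=2$, $V=\pd{}{z_1}$, $P=z_1^m$ and a generic deformation, $F^1_p$ is an $m$-sheeted branched cover of a disc in the $z_2$-line, so $\chi(F^1_p)=m-b$ where $b$ is the number of branch points, while $\chi(F^2_p)=b$; the multiplicity is recovered only from the \emph{sum}, not from any single term. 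So with your claimed values the right-hand side of \eqref{eq:mult-sum-chi} would be $0$, and the theorem would fail.

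The correct mechanism, which is the paper's (Khovanskii's) argument, is precisely this telescoping: straighten $V$ to $\pd{}{z_1}$, project by $\pi$ to the transversal coordinates $z'$, and note that over each $z'$ the fiber of $X^1_\e$ in a suitable neighborhood contains exactly $\mu$ points counted with multiplicity; a point where $P_\e$ vanishes to order exactly $k$ along its trajectory lies in $X^1_\e,\dots,X^k_\e$, so summing the (unweighted) fiber cardinalities of $\pi\rest{Y^r}$ over all $r$ gives $\mu$ for every $z'$ --- a Riemann--Hurwitz type count. One then integrates this identity against the Euler characteristic over the ball in the $z'$-variables (Fubini for $\chi$-integration), using Lemma~\ref{lem:milnor-phi} to identify the sets $Y^r=X^r_\e\cap(D\times B_\delta)$ with the Milnor fibers $F^r_p$. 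Note also that the theorem holds for an \emph{arbitrary} analytic deformation $P_e$ --- no genericity is used in the proof --- so the fact that your argument leans heavily on choosing $P_e$ and $\vell$ generically (and on polar varieties, which enter only later, in Proposition~\ref{prop:polar-cells} and Section~\ref{sec:betti}) is another sign the approach as written does not prove this statement.
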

\begin{proof}[Proof suggested by Khovanskii.]
  We first note that since $\mu:=\mult_p^V P$ is finite, it follows
  that $F^r_p=\emptyset$ for $r>\mu$, and hence the
  sum~\eqref{eq:mult-sum-chi} is finite.
  
  We may choose local analytic coordinates $(z_1,z')$ around $p$ such
  that $V=\pd{}{z_1}$. Denote by $\pi:\C^n\to\C^{n-1}$ the projection
  to the $z'$ coordinates. By assumption, $\pi\rest{X^0_0}$ has
  ramification of multiplicity $\mu:=\mult_p^V P$ at the origin. Then
  one can choose a neighborhood of the form
  $U=D\times B$ of the origin such that the
  $\pi\rest{U\cap X^0_0}$-fiber of any point in $B$ has exactly
  $\mu$ points counted with multiplicities. It also follows that the
  same is true for $\pi\rest{U\cap X^0_\e}$ for sufficiently small
  $\e$. We would essentially like to think of $U\cap X^r_\e$ as representing
  the Milnor fiber $F^r_p$. This requires a small technical justification,
  as follows.

  Let $\phi(z_1,z',e)=\norm{z'}^2+\norm{e}^2$. Then applying Lemma~\ref{lem:milnor-phi}
  for each $U\cap X^r$ and $\phi$, we have
  \begin{equation}
    F^r_p \simeq \phi^{-1}(\delta')\cap X^r_\e = X^r_\e\cap (D\times B_\delta) =: Y^r,
    \quad \text{where } \delta=\sqrt{\delta'-\abs{\e}^2}
  \end{equation}
  for sufficiently small $\e\ll\delta'\ll1$. Fix such a pair.

  We know that the $\pi\rest{Y^0}$-fiber of any point $z'\in B_\delta$ contains
  $\mu$ points, counted with multiplicities. Since the points of $Y^r$
  are exactly the points where $\pi$ has multiplicity $k+1$, it follows
  by a Riemann-Hurwitz type counting argument that for any $z'\in B_\delta$,
  \begin{equation}
    \mu = \sum_{k=1}^\infty \#\{\left[\pi\rest{Y^r}\right]^{-1}(z')\}.
  \end{equation}
  Using the Fubini theorem for integration over Euler
  characteristic~\cite{viro:euler-calculus}, we obtain
  \begin{equation}
    \begin{split}
      \mu = \int_{B_\delta} \mu \d\chi &= \int_{B_\delta} \sum_{r=1}^\infty \#\{\left[\pi\rest{Y^r}\right]^{-1}(z')\} \d\chi(z') \\
       &=\sum_{r=1}^\infty \int_{B_\delta} \chi \left(\left[\pi\rest{Y^r}\right]^{-1}(z')\right) \d\chi(z') \\
       &=\sum_{r=1}^\infty \chi(Y^r)  =\sum_{r=0}^\infty \chi(F^r_p).
    \end{split}
  \end{equation}
\end{proof}

The usefulness of Theorem~\ref{thm:gab-main} becomes apparent in view
of the following lemma, which guarantees the existence of sufficiently
generic deformations. We omit the proof, which is a standard exercise
in Sard type arguments, and refer the reader to~\cite{gabrielov:mult}
for details. We say that $X^r_\e$ is \emph{effectively smooth} at a
point $x$ if it is smooth, and moreover
$\d P_\e\wedge\cdots\wedge\d(V^{r-1}P_\e)\rest{X^r_\e}$ is non-zero at
$x$.

\begin{Lem}[\protect{\cite[Lemma 1]{gabrielov:mult}}]\label{lem:gab-sard}
  Let $\ell$ be a germ of an analytic function and suppose that
  $V\ell(p)\neq0$. Let $P^c=c_0+\cdots+c_n\ell^{n-1}$ where the
  coefficients $c_i$ are chosen generically, and consider the
  deformation $P^c_e(x)=P(x)+eP^c(x)$. Then there exists a neighborhood
  $U$ of $p$ such that for any sufficiently small $0\neq\e\ll1$,
  $X^r_\e$ is an effectively smooth $n-r$ dimensional set in $U$. In
  particular, $X^r_\e$ is empty in $U$ for $r>n$.
\end{Lem}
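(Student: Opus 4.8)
The plan is a routine parametric transversality (Thom--Sard) argument; the only computation requiring care is identifying the $c$-derivatives of the defining equations with a Wronskian. First I would pick local analytic coordinates $(z_1,z')$ near $p$ in which $V=\pd{}{z_1}$, so that $V^j=(\pd{}{z_1})^j$, and record that, since the coefficients $c_k$ of $P^c$ enter $P^c_e=P+eP^c$ linearly,
\begin{equation}
  V^jP^c_e = V^jP + e\,V^jP^c, \qquad \frac{\partial}{\partial c_k}\bigl(V^jP^c_e\bigr) = e\,\bigl(\pd{}{z_1}\bigr)^j(\ell^k).
\end{equation}
Treating the $c_k$ as extra variables, I would introduce, for each $r$, the incidence set $\mathcal Y^r\subset U\times\C^*\times\C^{n+1}_c$ cut out by $P^c_e=VP^c_e=\dots=V^{r-1}P^c_e=0$; its slice over a fixed $(c,\e)$ with $\e\neq 0$ is exactly $X^r_\e\cap U$.

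The heart of the matter is that the differential of $F^r:=(P^c_e,\dots,V^{r-1}P^c_e)$ in the $c$-directions alone is already surjective wherever $e\neq0$. Indeed, the $r\times r$ minor of $\bigl((\pd{}{z_1})^j(\ell^k)\bigr)_{0\le j\le r-1}$ formed by the columns $k=0,\dots,r-1$ is the Wronskian determinant of $1,\ell,\ell^2,\dots,\ell^{r-1}$ with respect to $z_1$, and an elementary computation identifies this with a nonzero constant multiple of $(V\ell)^{r(r-1)/2}$; since $V\ell(p)\neq0$, this minor is non-vanishing on a whole neighborhood $U$ of $p$. Hence $F^r\colon U\times\C^*\times\C^{n+1}_c\to\C^r$ is a submersion along $(F^r)^{-1}(0)=\mathcal Y^r$, so $\mathcal Y^r$ is smooth of codimension $r$, and the parametric transversality theorem (equivalently, generic smoothness in the polynomial setting used in the applications) produces a generic $c$ for which, simultaneously for all $r=1,\dots,n+1$, the map $F^r(\,\cdot\,,\,\cdot\,,c)\colon U\times\C^*\to\C^r$ is transverse to $0\in\C^r$. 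I would fix such a $c$.

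It remains to descend from $\mathcal Y^r$ to the fibers $X^r_\e$ and to read off effective smoothness. For $r>n$ the source $U\times\C^*$ has complex dimension $n+1\le r$, so transversality of $F^r(\,\cdot\,,\,\cdot\,,c)$ to the point $0$ forces $\mathcal Y^r$ to be at most $0$-dimensional, and empty once $r\ge n+2$ (also since $X^r\subset X^{n+1}$); its finitely many $e$-coordinates are then avoided when $0<|\e|\ll1$, so $X^r_\e\cap U=\emptyset$. For $r\le n$, I would project the smooth variety $\mathcal Y^r$ to the $e$-line: all but discretely many $\e$ are regular values, and for such $\e$ the slice $X^r_\e\cap U=\mathcal Y^r\cap\{e=\e\}$ is smooth of dimension $n-r$. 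Finally, at a point $x\in X^r_\e\cap U$ one has $T_x(X^r_\e)=\ker\bigl(d_xF^r|_{e=\e}\bigr)$, so its being $(n-r)$-dimensional forces $d_xF^r|_{e=\e}$ to have rank $r$; that is, $dP^c_\e,d(VP^c_\e),\dots,d(V^{r-1}P^c_\e)$ are linearly independent at $x$, which is precisely effective smoothness of $X^r_\e$ there.

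The step I expect to be the main obstacle is the uniformity in $\e$: one needs a \emph{single} generic $c$ to work for \emph{all} sufficiently small $\e\neq0$ at once, and this requires the exceptional $e$-values to form a discrete set. This is where the germ/neighborhood formulation is used---one shrinks $U$ together with the $e$-disc and invokes a Sard-type theorem for the (stratified, or in the polynomial case genuinely algebraic) projection $\mathcal Y^r\to\C_e$; in the algebraic setting of interest this is just the fact that a flat family over a curve has smooth fibers away from a finite set. The coordinate reduction, the Wronskian identity and the parametric transversality input are all routine.
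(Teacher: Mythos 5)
The paper never actually proves this lemma---it is explicitly omitted (``a standard exercise in Sard type arguments'') with a pointer to Gabrielov's original article---so there is no in-paper proof to compare against; I can only judge your argument on its own terms. Your route is the standard one, and the key computational input is correct: after straightening $V=\pd{}{z_1}$, the $c$-derivatives of $(P^c_e,\dots,V^{r-1}P^c_e)$ form, up to the factor $e$, the Wronskian matrix of $1,\ell,\dots,\ell^{r-1}$ in $z_1$, and the composition formula for Wronskians gives $\big(\prod_{j<r}j!\big)(V\ell)^{r(r-1)/2}\neq0$ near $p$; hence the map is a submersion in the $c$-directions wherever $e\neq0$ and parametric transversality applies. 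One indexing point: to get $X^r_\e=\emptyset$ for $r>n$ you genuinely need transversality at level $r=n+1$ (it does \emph{not} follow from effective smoothness at level $n$, since $V^nP_\e$ could still vanish at a point of the finite set $X^n_\e$), and this requires the $(n+1)\times(n+1)$ Wronskian, i.e.\ the monomials $1,\ell,\dots,\ell^{n}$. The statement's $c_0+\cdots+c_n\ell^{n-1}$ lists $n+1$ coefficients but stops at $\ell^{n-1}$; you are implicitly (and correctly) reading it as $\sum_{k=0}^{n}c_k\ell^k$.

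The one genuine gap is the one you flag yourself: a single generic $c$ must work for \emph{all} sufficiently small $\e\neq0$. Applying Sard to $e$ restricted to the smooth set $\mathcal{Y}^r\cap\{c=\const\}$ only yields a measure-zero set of bad $\e$, and for a non-proper analytic map the critical values need not be discrete---a priori they could accumulate at $0$, which would destroy the conclusion (and, downstream, the well-definedness of the Milnor fibers $F^r_p$). In the algebraic setting in which the lemma is actually used (polynomial $P$, $V$, linear $\ell$), your appeal to generic smoothness over the $e$-line does close the gap, since the exceptional set of $\e$ is constructible, hence finite. In the general analytic setting the fix is not a second Sard argument in $\e$ but a second use of the genericity of $c$: substitute $c'=ec$, note that the locus $B^r\subset U\times\C^{n+1}_{c'}$ where the system fails to be effectively smooth is analytic, and argue that for $c$ outside a proper analytic germ of directions the punctured line $\{\e c:\e\neq0\}$ avoids $B^r$ in a whole punctured neighborhood of $(p,0)$ (equivalently, $(p,0)$ does not lie in the closure from $e\neq0$ of the pullback of $B^r$). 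With that replacement the proof is complete; as written, the $\e$-uniformity step is only established in the algebraic case.
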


\section{An estimate for the Betti numbers of the Milnor fiber}
\label{sec:betti}

In this section we present an estimate for the Betti numbers of
the Milnor fiber of a deformation, under a smoothness assumption.
The estimate is expressed in terms of the geometry of the polar
varieties.

In this section we consider the ambient manifold $M=(\C^*)^n\times\C$
with coordinate ring $R=\C[x_1^{\pm1},\ldots,x_n^{\pm1},e]$ or
$M=\C^n\times\C$ with coordinate ring $R=\C[x_1,\ldots,x_n,e]$. Let
$P^1_e,\ldots,P^r_e\in R$, where as usual we think of $e$ as a
deformation parameter. Let $\Delta\subset\Z^n$ denote the convex hull
of $\Delta(P^1_e),\ldots,\Delta(P^r_e)$ for a fixed generic value of $e$.

We denote
\begin{equation}
  X^r = \clo\left[ \{ P^1_e = P^2_e = \cdots = P^r_e = 0 \} \setminus \{ e=0 \} \right]
\end{equation}
where $\clo$ denotes analytic closure. In other words, we define
$X^r$ by the vanishing of $P^1_e,\ldots,P^r_e$ outside $e=0$
and complete this variety as a flat family over $e=0$. We denote
by $F_p$ the Milnor fiber of $(X,p)$.

Let $\Sigma(X^r)$ denote the set of points where the fiber $X_e^r$ is not
effectively smooth (completed as a flat family over $e=0$), i.e.
\begin{equation}\label{eq:sigma-def}
  \Sigma(X^r) = \clo\left[\{ \d P^1_e\wedge\cdots\wedge\d P^r_e\wedge\d e=0 \} \setminus\{e=0\} \right]
\end{equation}
We will say that a point $p\in X^r_0$ is \emph{good} if
$p\not\in\Sigma(X^r)$. Our goal is to estimate the Betti numbers
$b_i(F^r_p)$ at good points $p$ in terms of the Newton polytope
$\Delta$. More specifically, we give an appropriate definition for
globally defined polar varieties, whose degrees are bounded in terms
of the Newton polytopes, and show that the geometry of these polar
varieties controls the Betti numbers.

\subsection{The polar varieties}

In this section we keep the notation of~\secref{sec:le-milnor}.
Naturally, our objective is to obtain upper bounds on the Betti
numbers of the Milnor fiber in terms of the polar varieties through
Proposition~\ref{prop:polar-cells}. One could attempt to use the
definition of polar varieties given in Definition~\ref{def:polar}
directly. However, in the global context this definition gives rise to
certain degeneracies (for instance, where the sets $X^r_e$ are
singular, and when the functionals $\vell$ are not sufficiently
generic), making the polar varieties more difficult to study. Since we
are interested primarily in the behavior of these varieties around
good points, we opt to use a refined definition which agrees with
Definition~\ref{def:polar} in a neighborhood of such points while
eliminating some of the more complicated degenerate behavior.

\begin{Def} \label{eq:bpv-def}
  For $k=1,\ldots,n-r+1$ and $\vell$ as in~\eqref{eq:vell-def}, we
  define the set $\tpv(X^r)$ by
  \begin{equation}
    \tpv(X^r) := C_k \left[ \pv(X^r) \setminus (\Sigma(X^r)\cup\pv[k-1](X^r)) \right]
  \end{equation}
  where $C_k(A)$ denotes the union of the $k$-dimensional components
  of $A$ which are not contained in a fiber $e=\const$. We define the
  \emph{refined polar variety}, denoted $\bpv(X^r)$, to be the Zariski
  closure of $\tpv(X^r)$.
\end{Def}

By definition, $\bpv(X^r)$ is a $k$-dimensional flat family, with pure
$k-1$-dimensional fibers $\bpv(X^r)_\e$. Let $p\in X_0$ be a good
point and suppose that $\vell$ is sufficiently generic. Then by
Proposition~\ref{prop:pv-proper}, in a neighborhood of $p$ the set
$\pv(X^r)$ has pure dimension $k$ and the set $\pv[k-1](X^r)$ has pure
dimension $k-1$. It follows that in a neighborhood of $p$,
$\bpv(X^r)=\pv(X^r)$.

We now consider the degree of the refined polar variety. We
begin with the case of the torus $M_0=(\C^*)^n$.

\begin{Prop}\label{prop:pv-deg}
  For any $\e\in\C$, we have the bound
  \begin{equation}
    \deg \bpv(X^r)_\e \le \tbinom{n}{r+k-1}n! \qmi_{k-1}(\Delta+\Delta_x)
  \end{equation}
  where $\deg$ denotes degree in $(\C^*)^n$, $\qmi$ denotes the
  quermassintegral, and $\Delta_x$ denotes the standard simplex in the
  $x$ variables.
\end{Prop}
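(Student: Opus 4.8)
The plan is to bound $\deg\bpv(X^r)_\e$ by exhibiting $\bpv(X^r)_\e$ as (a component of) a determinantal locus cut out inside $X^r_\e$ by the vanishing of the $k\times k$ minors of the matrix $(\d\ell_1,\ldots,\d\ell_k)$ restricted to $X^r_\e$, and then applying the Bernstein–Kushnirenko theorem to the system of equations defining that locus after intersection with a generic linear space of complementary dimension. Concretely, for generic $\e$ the fiber $X^r_\e$ is effectively smooth away from $\Sigma(X^r)_\e$, so on this locus $\pv(X^r)_\e$ is the zero set of the $k\times k$ minors of the $(r+k)\times n$ matrix whose rows are $\d P^1_\e,\ldots,\d P^r_\e,\d\ell_1,\ldots,\d\ell_k$ — equivalently, the locus where $\d P^1_\e\wedge\cdots\wedge\d P^r_\e\wedge\d\ell_{i_1}\wedge\cdots\wedge\d\ell_{i_k}=0$ along $X^r_\e$ simultaneously for all index choices. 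To count, I would intersect $\bpv(X^r)_\e$ with a generic affine-linear space $L$ of codimension $k-1$; by definition of degree in $(\C^*)^n$ this intersection number is $\deg\bpv(X^r)_\e$, and $\bpv(X^r)_\e\cap L$ is a zero-dimensional subscheme cut out in $(\C^*)^n$ by the $r$ equations $P^1_\e=\cdots=P^r_\e=0$, the $k-1$ linear equations defining $L$, and a single generic maximal-minor equation of the determinantal condition (after a generic change of basis among the $\ell_i$, only one minor equation is needed to cut the determinantal locus down in codimension, by the genericity built into $\vell$).

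The next step is the Newton-polytope bookkeeping. Each $P^j_\e$ has Newton polytope contained in $\Delta$, hence the same is true for $P^j_\e(x_1+c_1,\ldots)$ after any translation if we are in the affine case, but here we work directly in $(\C^*)^n$. The partial derivatives $\partial P^j_\e/\partial x_i$, when multiplied by $x_i$ (which is legitimate in $(\C^*)^n$ since $x_i$ is invertible), again have Newton polytope in $\Delta$; and a linear form $\ell_i$ has Newton polytope contained in $\Delta_x$, with its $x_i\,\partial\ell_i/\partial x_i$ also in $\Delta_x$. Therefore each entry of the matrix $\operatorname{diag}(x_1,\ldots,x_n)$ times $(\d P^1_\e,\ldots,\d\ell_k)^{T}$ — after clearing denominators by $x_i$'s, which does not change the zero set on $(\C^*)^n$ — is a Laurent polynomial supported in $\Delta$ (for the $P$-rows) or $\Delta_x$ (for the $\ell$-rows). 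Consequently a generic $k\times k$ minor involving $k-1$ of the $\ell$-rows and one $P$-row (this is the combination that appears when we've already used up $r$ rows from the $P^j$ and need the determinantal condition to drop one further minor) is supported in the Minkowski sum $(k-1)\Delta_x+\Delta$; more precisely, over all minors the relevant Newton polytope is $\Delta+\Delta_x$ once we symmetrize, and the mixed volume that emerges after combining the $r$ copies of $\Delta$, the $k-1$ copies of $L$'s defining forms (standard simplices, i.e. $\Delta_x$), and the single minor equation (supported in $\Delta+\Delta_x$, which I bound crudely by noting $\Delta\subseteq\Delta+\Delta_x$) is exactly $n!\,V(\Delta,\ldots,\Delta,\Delta+\Delta_x,\Delta_x,\ldots,\Delta_x)$ with $r$ copies of $\Delta$ and $n-r-k$ copies of $\Delta_x$... which, after the crude monotone replacement $\Delta\rightsquigarrow\Delta+\Delta_x$ in the $r$ plain-$\Delta$ slots, is $\le n!\,\qmi_{k-1}(\Delta+\Delta_x)$. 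The binomial factor $\binom{n}{r+k-1}$ arises from the number of ways to select which subset of minors/rows realizes the determinantal drop, i.e. a Bézout-type accounting of the determinantal variety as a sum over the $\binom{n}{r+k-1}$ choices of $(r+k-1)$ columns.

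The main obstacle, and the step I would spend the most care on, is making the "one generic minor equation suffices" claim rigorous and getting the combinatorial constant $\binom{n}{r+k-1}$ exactly rather than a weaker bound: a priori the determinantal locus $\{\operatorname{rank}<k\}$ is not a complete intersection, so naively one would multiply degrees of all $\binom{n}{k}\binom{r+k}{k}$ minors. The standard fix is the classical degree formula for (generic) determinantal varieties — Giambelli–Thom–Porteous — which expresses the degree of the rank-$\le k-1$ locus of an $(r+k)\times n$ matrix of forms as a Schur-polynomial expression in the degrees, and in the situation at hand (where $r+k$ rows, and the determinantal locus has the expected codimension $n-r-k+1$) this collapses to $\binom{n}{r+k-1}$ times the "top" mixed-volume term. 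I would invoke this together with the Bernstein–Kushnirenko theorem applied to the auxiliary system (the $r$ polynomials $P^j_\e$, the $k-1$ linear forms, and a single generic $\C$-linear combination of the maximal minors), using the genericity of $\vell$ from Proposition~\ref{prop:pv-proper} to ensure the intersection is proper and zero-dimensional so that conservation of numbers gives an upper bound even in the non-generic $\e$ (including $\e=0$) by lower-semicontinuity of degree on the flat family $\bpv(X^r)$. A secondary technical point is that $\bpv(X^r)_\e$ might acquire extra components at special $\e$ coming from $\Sigma(X^r)$ or from $\pv[k-1](X^r)$; but the refined definition precisely excises these, and the Zariski-closure/flat-family construction guarantees $\deg\bpv(X^r)_\e$ is bounded by $\deg\bpv(X^r)_{\e_0}$ for generic $\e_0$, where the determinantal count applies directly.
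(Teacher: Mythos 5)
There is a genuine gap, and it sits exactly where you placed your ``main obstacle''. Your plan to realize $\bpv(X^r)_\e\cap L$ as the zero set of the $r$ equations $P^1_\e=\cdots=P^r_\e=0$, the $k-1$ linear equations of $L$, and \emph{one} generic maximal-minor equation cannot work except in the boundary case $r+k=n$: inside $X^r_\e$ the rank condition defining $\pv(X^r)_\e$ has codimension $n-r-k+1$, so your proposed system of $r+k$ equations in $n$ variables cuts out a locus of dimension $n-r-k$, and for $r+k<n$ the points of $\bpv(X^r)_\e\cap L$ are not isolated solutions of it; Bernstein--Kushnirenko only counts isolated solutions, so it yields no bound. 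The polytope bookkeeping reflects the same problem: a $k\times k$ minor built from $k-1$ rows of linear forms is supported in $\Delta+(k-1)\Delta_x$, not $\Delta+\Delta_x$, and the mixed volume you write down needs $n-r-k$ additional copies of $\Delta_x$ whose equations you never produce. The fallback you name, a Giambelli--Thom--Porteous computation for the degeneracy locus, is indeed a classically correct alternative, but in the Newton-polytope setting it is not an off-the-shelf statement (one would need a toric degeneracy-locus degree formula with mixed volumes), and you do not carry it out; the constant $\tbinom{n}{r+k-1}$ does not simply drop out of ``choices of columns''.

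The paper avoids the determinantal difficulty entirely by the incidence-variety trick: at a point of $\tpv(X^r)_\e$ the differentials $\d P^1_\e,\ldots,\d P^r_\e,\d\ell_1,\ldots,\d\ell_{k-1}$ are linearly independent while adjoining $\d\ell_k$ creates a dependence, so there is a \emph{unique} coefficient vector $(\l_1,\ldots,\l_{r+k-1})$ with $\d\ell_k=\l_1\d P^1_\e+\cdots+\l_r\d P^r_\e+\l_{r+1}\d\ell_1+\cdots+\l_{r+k-1}\d\ell_{k-1}$. Adjoining these coefficients as new unknowns turns every point of $\tpv(X^r)_\e\cap L$ into an isolated solution of a square system of $n+r+k-1$ equations in $(\C^*)^n\times\C^{r+k-1}$ (the $r$ equations $P^j_\e=0$, the $n$ coordinate equations of the dependence, and the $k-1$ equations of $L$), with Newton polytopes $\Delta$, $\Delta-\Delta_x+\Delta_\l$ and $\Delta_x$ respectively. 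Bernstein--Kushnirenko applies directly to this system; the factor $\tbinom{n}{r+k-1}$ then falls out of the multilinear expansion of the mixed volume via Corollary~\ref{cor:mixed-vol-orth} (the simplex $\Delta_\l$ must occupy exactly $r+k-1$ of the $n$ middle slots), and translation invariance plus monotonicity give the bound by $n!\,\qmi_{k-1}(\Delta+\Delta_x)$. Your preliminary reductions --- passing to a generic fiber by lower semicontinuity of degree on the flat family, and using the refined definition to ensure $L$ meets only $\tpv(X^r)_\e$ --- do agree with the paper; what is missing is this auxiliary-variable formulation, which is precisely what converts the rank condition into a genuine complete-intersection count and produces the stated constant.
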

\begin{proof}
  We may assume without loss of generality that $\Delta$ contains the
  origin. Indeed, one can always translate $\Delta$ to achieve this by
  multiplying the equations $P^1,\ldots,P^r$ by a common monomial.
  This does not affect the set $X^r$ or $\Sigma(X^r)$ outside of the
  coordinate axes (which lie outside $(\C^*)^n$) and thus it is
  straightforward to check that it does not affect the refined polar
  variety $\bpv(X^r)$.
  
  By the lower semicontinuity of the degree function in flat families,
  it suffices to prove the claim for a generic fiber. Let $L$ be a
  generic affine plane of codimension $k-1$ in $(\C^*)^n$. Since the
  generic fiber $\bpv(X^r)_\e$ has pure dimension $k-1$ and the
  generic fiber $\bpv(X^r)_\e\setminus\tpv(X^r)_\e$ has strictly
  smaller dimension, we may assume that $L$ intersects $\bpv(X^r)_\e$
  only in points of $\tpv(X^r)_\e$. Let $f_1,\ldots,f_{k-1}$ denote
  $k-1$ affine linear functionals defining $L$.

  We now restrict attention to a particular generic fiber
  $e^{-1}(\e)\simeq(\C^*)^n$. All exterior derivatives computed below
  are taken with respect to this ambient space. At any point
  $p\in\tpv(X^r)_\e$, the differentials
  \begin{equation}
    \d P^1_\e,\ldots,\d P^r_\e, \d\ell_1,\ldots,\d\ell_{k-1}
  \end{equation}
  are linearly independent, while the differentials
  \begin{equation}
    \d P^1_\e,\ldots,\d P^r_\e, \d\ell_1,\ldots,\d\ell_{k-1},\d\ell_k
  \end{equation}
  are linearly dependent. Thus there exists one and only one linear
  dependence of the form
  \begin{equation}
    \d\ell_k = \l_1\d P^1_\e+\cdots+\l_r P^r_\e+\l_{r+1}\d\ell_1+\cdots+\l_{r+k-1}\d\ell_{k-1}
  \end{equation}
  with $\l_1,\ldots,\l_{r+k-1}\in\C$ at $p$.

  In other words, each intersection between $\tpv(X^r)_\e$ and $L$
  corresponds to an isolated solution of the following system of
  equations
  \begin{equation}\label{eq:bpv-deg-system}
    \begin{aligned}
      &P^j_\e=0 \qquad &j=1,\ldots,r \\
      &\pd{\ell_k}{x_j} = \sum_{i=1}^r \l_i \pd{P^i_\e}{x_j}+\sum_{i=1}^{k-1} \l_{r+i}\pd{\ell_i}{x_j}=0
      \qquad &j=1,\ldots,n \\
      &f_j = 0 \qquad &j=1,\ldots,k-1
    \end{aligned}
  \end{equation}

  Denote by $\Delta_x$ (resp. $\Delta_\l$) the standard simplex in
  the $x$ (resp. $\l$) variables. Then the system above has Newton
  polytopes bounded by
  \begin{equation}
    \underbrace{\Delta}_{r\text{ times}},
    \underbrace{\Delta-\Delta_x+\Delta_\l}_{n\text{ times}},
    \underbrace{\Delta_x}_{k-1\text{ times}}
  \end{equation}
  We now estimate the number of solutions of~\eqref{eq:bpv-deg-system}
  by the BKK theorem. Since the Newton polytopes above are invariant
  under translation in the $\l$ variables, we have that the number of
  solutions in $(\C^*)^n\times\C^{r+k-1}$ is bounded by the mixed
  volume
  \begin{equation}
    (n+r+k-1)! V(\underbrace{\Delta}_{r\text{ times}},
    \underbrace{\Delta-\Delta_x+\Delta_\l}_{n\text{ times}},
    \underbrace{\Delta_x}_{k-1\text{ times}}).
  \end{equation}
  We expand this mixed volume by linearity. In the expansion, if the
  $\Delta_\l$ term is not taken $r+k-1$ times out of the $n$ appearances,
  then the mixed volume vanishes by Corollary~\ref{cor:mixed-vol-orth}.
  Thus, again by Corollary~\ref{cor:mixed-vol-orth} the mixed volume
  is equal to
  \begin{equation}
    \tbinom{n}{r+k-1} n! V(\underbrace{\Delta}_{r\text{ times}},
    \underbrace{\Delta-\Delta_x}_{n-r-k+1\text{ times}},
    \underbrace{\Delta_x}_{k-1\text{ times}})
  \end{equation}
  and since the mixed volume is invariant under translation and
  monotone with respect to each argument, we finally obtain that the
  number of solutions of~\eqref{eq:bpv-deg-system} is bounded by
  \begin{equation}
    \tbinom{n}{r+k-1}n! V(\underbrace{\Delta+\Delta_x}_{n-k+1\text{ times}},
    \underbrace{\Delta_x}_{k-1\text{ times}})
  \end{equation}
  as stated.
\end{proof}

We move now to the case of the affine space $M_0=\C^n$. Suppose that
$\Delta\subset\Z_{\ge0}^n$ is a convex co-ideal. Then
\begin{equation}
  \Delta(P)\subset\Delta \implies \Delta(P_{x_i})\subset\Delta \qquad i=1,\ldots,n
\end{equation}
In this case one can repeat the proof of Proposition~\ref{prop:pv-deg},
in combination with Remark~\ref{rem:bk-affine} to obtain the following.

\begin{Prop}\label{prop:pv-deg-affine}
  Suppose that $\Delta\subset\Z_{\ge0}^n$ is a convex co-ideal. Then
  for any $\e\in\C$ we have the bound
  \begin{equation}
    \deg \bpv(X^r)_\e \le \tbinom{n}{r+k-1}n! \qmi_{k-1}(\Delta)
  \end{equation}
  where $\deg$ denotes degree in $\C^n$ and $\qmi$ denotes the
  quermassintegral.

  In particular, if $P^1,\ldots,P^r$ are polynomials (with respect
  to $x$) of degrees bounded by $d$, then
  \begin{equation}
    \deg \bpv(X^r)_\e \le \tbinom{n}{r+k-1} d^{n-k+1}.
  \end{equation}
\end{Prop}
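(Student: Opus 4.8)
The plan is to follow the proof of Proposition~\ref{prop:pv-deg} line by line, making only the adjustments forced by working in $\C^n$ rather than $(\C^*)^n$. By lower semicontinuity of the degree in flat families it suffices to bound $\deg\bpv(X^r)_\e$ for a generic $\e$, and since $\bpv(X^r)_\e$ has pure dimension $k-1$ while $\bpv(X^r)_\e\setminus\tpv(X^r)_\e$ has strictly smaller dimension, this degree is at most the number of points in which a generic affine-linear $L=\{f_1=\cdots=f_{k-1}=0\}$ of codimension $k-1$ meets $\tpv(X^r)_\e$. Exactly as before, each such point corresponds to an isolated solution of the system~\eqref{eq:bpv-deg-system} in $\C^n\times\C^{r+k-1}$, the coordinates $\l_1,\ldots,\l_{r+k-1}$ recording the unique linear dependence $\d\ell_k=\sum_{i=1}^r\l_i\,\d P^i_\e+\sum_{i=1}^{k-1}\l_{r+i}\,\d\ell_i$ on $T_p X^r_\e$.

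Two things change. First, the ambient space of the system is $\C^n\times\C^{r+k-1}$ rather than $(\C^*)^n\times\C^{r+k-1}$, so I would appeal to Remark~\ref{rem:bk-affine} in place of the bare Bernstein--Kushnirenko theorem; this is legitimate precisely because $\Delta$ is a convex co-ideal, whence $\Delta$, $\Delta_x$, $\Delta_\l$ and the product $\Delta+\Delta_\l$ are all convex co-ideals in the enlarged lattice $\Z_{\ge0}^{n+r+k-1}$. Second, and this is where the sharpening over the torus case enters, the implication $\Delta(P)\subset\Delta\implies\Delta(P_{x_i})\subset\Delta$ available for co-ideals shows that the derivative equations in~\eqref{eq:bpv-deg-system} have Newton polytopes contained in $\Delta+\Delta_\l$, rather than in $\Delta-\Delta_x+\Delta_\l$ as in the torus case (the constant term $\partial\ell_k/\partial x_j$ and the $\ell$-linear terms are absorbed since $0\in\Delta$, which is automatic for a nonempty co-ideal, so here no preliminary translation of $\Delta$ is needed or possible). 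The system therefore has assigned Newton polytopes $\Delta$ ($r$ times), $\Delta+\Delta_\l$ ($n$ times), $\Delta_x$ ($k-1$ times).

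Bounding the number of solutions by $(n+r+k-1)!\,V(\ldots)$ and expanding multilinearly in the $n$ copies of $\Delta+\Delta_\l$, Corollary~\ref{cor:mixed-vol-orth} applied to the orthogonal splitting $\R^{n+r+k-1}=\R^n_x\oplus\R^{r+k-1}_\l$ kills every term in which $\Delta_\l$ is not chosen exactly $r+k-1$ times; the $\binom{n}{r+k-1}$ surviving terms each factor as $\big[(r+k-1)!\,\vol(\Delta_\l)\big]\cdot\big[n!\,V(\underbrace{\Delta}_{n-k+1},\underbrace{\Delta_x}_{k-1})\big]$, and since $(r+k-1)!\,\vol(\Delta_\l)=1$ this equals exactly $\binom{n}{r+k-1}n!\,\qmi_{k-1}(\Delta)$. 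Note that the derivative polytopes here stay inside $\Delta$, whereas in Proposition~\ref{prop:pv-deg} they only lie in $\Delta-\Delta_x$, so the monotonicity step that inflates $\Delta$ to $\Delta+\Delta_x$ there is not incurred, and the bound emerges directly in terms of $\qmi_{k-1}(\Delta)$.

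For the final assertion I would observe that the argument used nothing about $\Delta$ beyond the inclusions $\Delta(P^i_\e)\subset\Delta$, so the same bound holds with $\Delta$ replaced by any convex co-ideal containing every $\Delta(P^i_\e)$; when $\deg P^i\le d$ the simplex $d\Delta_x$ is such a co-ideal, and $\qmi_{k-1}(d\Delta_x)=d^{n-k+1}\vol(\Delta_x)=d^{n-k+1}/n!$, giving $\deg\bpv(X^r)_\e\le\binom{n}{r+k-1}d^{n-k+1}$. The only genuinely new point requiring care is bookkeeping: checking that each polytope entering the system is a convex co-ideal in $\Z_{\ge0}^{n+r+k-1}$ so that Remark~\ref{rem:bk-affine} applies, and that $0\in\Delta$ so the constant terms are accounted for. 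All the geometric content — the identification of intersection points of the refined polar variety with solutions of~\eqref{eq:bpv-deg-system}, and the conservation-of-numbers reduction to a generic fiber — is inherited unchanged from the proof of Proposition~\ref{prop:pv-deg}.
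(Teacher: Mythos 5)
Your proposal is correct and follows exactly the route the paper intends: its proof of this proposition is literally ``repeat the proof of Proposition~\ref{prop:pv-deg} in combination with Remark~\ref{rem:bk-affine}'', using the co-ideal property $\Delta(P)\subset\Delta\implies\Delta(P_{x_i})\subset\Delta$ stated just before it. Your spelled-out version — derivative polytopes staying in $\Delta+\Delta_\l$ so no translation or $\Delta+\Delta_x$ inflation is needed, the co-ideal check in $\Z_{\ge0}^{n+r+k-1}$ justifying the affine Bernstein--Kushnirenko count, and the reduction of the degree bound to $d\Delta_x$ — matches the paper's argument, just with the omitted bookkeeping made explicit.
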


\subsection{Upper bounds for Betti numbers}

In this subsection we present two upper bounds for the Betti
numbers of the Milnor fiber. The first of these is given in
terms of intersection numbers between the polar varieties
$\bpv(X^r)$ and the affine spaces $\las[k-1](p)$.

\begin{Thm}\label{thm:fiber-good-point-bd}
  Let $S\subset M_0$ be a finite collection of good points (i.e.
  points $p$ such that $X^r_\e$ is effectively smooth in a
  neighborhood of $p$ for $\e\neq0$). Fix $\vell$ sufficiently
  generic.

  Then for $k=0,\ldots,n-r$ and for any good $p\in S$ , we have
  \begin{equation} \label{eq:betti-good-point-bound}
    b_k(F^r_p) \le i(p; \bpv[n-r-k+1](X^r)_0 \cdot \las[n-r-k](p)_0;M_0)
  \end{equation}
\end{Thm}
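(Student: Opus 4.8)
The plan is to combine Proposition~\ref{prop:polar-cells}, which expresses the Betti numbers of the Milnor fiber in terms of intersection multiplicities of the "naive" polar varieties $\pv(X^r)$, with the observation (already recorded in the text) that the refined polar variety $\bpv(X^r)$ agrees with $\pv(X^r)$ in a neighborhood of a good point when $\vell$ is sufficiently generic. Concretely, I would first fix $\vell$ generic enough that, for \emph{every} $p\in S$ simultaneously, the conclusions of Proposition~\ref{prop:pv-proper} hold: this is a finite collection of good points, each imposing a nonempty Zariski-open condition on $\vell\in (M^*)^{\dim_\C X_0}$, so a common generic choice exists. For such $\vell$, Proposition~\ref{prop:polar-cells} gives a cellular structure on $F^r_p$ with $b_k(F^r_p)\le c_k = i(p;\pv[d+1-k](X^r)_0\cdot\las[d-k](p)_0;\C^N)$, where $d=\dim_\C X^r_\e = n-r$; substituting $d=n-r$ yields $\pv[n-r-k+1](X^r)_0$ and $\las[n-r-k](p)_0$, matching the indices in~\eqref{eq:betti-good-point-bound} up to replacing $\pv$ by $\bpv$.

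Second, I would argue that the intersection multiplicity at $p$ is unchanged when $\pv[n-r-k+1](X^r)$ is replaced by $\bpv[n-r-k+1](X^r)$. Since $p$ is a good point ($p\notin\Sigma(X^r)$), in a small analytic neighborhood $U$ of $p$ the fiber $X^r_\e$ is effectively smooth for $\e\ne0$, and by Proposition~\ref{prop:pv-proper} the set $\pv[n-r-k+1](X^r)$ has pure dimension $n-r-k+1$ near $p$ while $\pv[n-r-k](X^r)$ has strictly smaller dimension there; hence $\pv[n-r-k+1](X^r)\setminus(\Sigma(X^r)\cup\pv[n-r-k](X^r))$ coincides with $\pv[n-r-k+1](X^r)$ in $U$, so $\tpv[n-r-k+1](X^r)$ and therefore its Zariski closure $\bpv[n-r-k+1](X^r)$ agrees with $\pv[n-r-k+1](X^r)$ on $U$. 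Because intersection multiplicity $i(p;\,\cdot\,\cdot\,\cdot\,;M_0)$ is a local invariant at $p$, and $\las[n-r-k](p)_0$ passes through $p$, we get $i(p;\pv[n-r-k+1](X^r)_0\cdot\las[n-r-k](p)_0;\C^N) = i(p;\bpv[n-r-k+1](X^r)_0\cdot\las[n-r-k](p)_0;M_0)$ — noting that near the good point $p$ the ambient $\C^N$ of Proposition~\ref{prop:polar-cells} may be taken to be (the germ of) $M_0$, the two choices of coordinates being irrelevant by Fact~\ref{fact:priv-family} and the locality of intersection numbers. This gives exactly~\eqref{eq:betti-good-point-bound}.

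The main obstacle I anticipate is \textbf{bookkeeping the genericity of $\vell$}: the cellular decomposition of Proposition~\ref{prop:polar-cells} requires $\vell$ generic for the given deformation $(X^r,p)$, the properness statement of Proposition~\ref{prop:pv-proper} requires $\vell$ generic again, the identification $\bpv(X^r)=\pv(X^r)$ near $p$ requires $\vell$ generic with respect to the global variety, and one needs \emph{the same} $\vell$ to work for all $p$ in the finite set $S$ and for all $k=0,\ldots,n-r$. The resolution is that each of these is an open dense (or at least nonempty Zariski-open) condition on $\vell$, and there are finitely many of them (finitely many points, finitely many values of $k$, finitely many values of $r$ once we fix the range $r=1,\ldots,n$), so their intersection is still nonempty; I would spell this out carefully rather than wave at it. A secondary, more cosmetic point is reconciling the range of indices — Proposition~\ref{prop:polar-cells} produces $c_{d+1-k}$ for $k=1,\ldots,d+1$, i.e.\ cells in dimensions $0,\ldots,d$, so $b_k$ for $k>d=n-r$ vanishes and the bound is vacuous there, consistent with the stated range $k=0,\ldots,n-r$; and one checks that the arithmetic $d+1-(d+1-k)=k$ in Proposition~\ref{prop:polar-cells} translates $c_{n-r-k+1}$-indexed polar data into the $k$-th Betti number exactly as written.
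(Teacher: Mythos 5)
Your proposal is correct and follows essentially the same route as the paper: the paper's own proof is exactly the one-line combination of Proposition~\ref{prop:polar-cells} with the observation (recorded after the definition of the refined polar varieties, via Proposition~\ref{prop:pv-proper}) that $\bpv[n-r-k+1](X^r)$ coincides with $\pv[n-r-k+1](X^r)$ near any good point, so the intersection numbers agree by locality. Your additional care with the simultaneous genericity of $\vell$ over the finite set $S$ and the index substitution $d=n-r$ just makes explicit the bookkeeping the paper leaves implicit.
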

\begin{proof}
  The statement follows by application of Proposition~\ref{prop:polar-cells},
  after noting that $\bpv[n-r-k+1](X^r)$ agrees with the polar variety $\pv[n-r-k+1](X^r)$
  in a neighborhood of any good point $p$.
\end{proof}

Next, we give a bound that holds uniformly at all good points $p\in M_0$.

\begin{Thm}\label{thm:fiber-uniform-bd}
  Fix $\vell$ sufficiently generic. Then for $k=0,\ldots,n-r$ and for
  any good $p\in M_0$ , we have
  \begin{equation} \label{eq:betti-uniform-bound}
    b_k(F^r_p) \le \degf_{\bpv[n-r-k+1](X^r)_0}(p)
  \end{equation}
\end{Thm}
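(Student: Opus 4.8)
The plan is to deduce this directly from Theorem~\ref{thm:fiber-good-point-bd} together with a Bézout-type estimate for the local intersection number appearing there. Set $j:=n-r-k+1$; then $\bpv[j](X^r)_0$ is a cycle of pure dimension $n-r-k$ and $\las[n-r-k](p)_0$ is an affine-linear subspace of $M_0$ of codimension $n-r-k$, so at $p$ the two meet in complementary dimension. By Theorem~\ref{thm:fiber-good-point-bd} it suffices to show
\begin{equation}
  i\bigl(p;\bpv[j](X^r)_0\cdot\las[n-r-k](p)_0;M_0\bigr)\le\degf_{\bpv[j](X^r)_0}(p).
\end{equation}
Decomposing $\bpv[j](X^r)_0=\sum_i n_i[V_i]$ into irreducible components and using bilinearity of the intersection product, the left-hand side equals $\sum_i n_i\,i(p;V_i\cdot\las[n-r-k](p)_0;M_0)$, where the terms with $p\notin V_i$ vanish and the remaining local intersection numbers are well defined because $p$ is an isolated point of the intersection (this is precisely the properness used in Theorem~\ref{thm:fiber-good-point-bd}, coming from Proposition~\ref{prop:pv-proper}). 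Comparing with the definition of $\degf$, the theorem reduces to the claim that for an irreducible subvariety $V\subset M_0$ of dimension $c$ and an affine-linear $L\subset M_0$ of codimension $c$ having $p$ as an isolated point of $V\cap L$, one has $i(p;V\cdot L;M_0)\le\deg V$.

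I would prove this claim by conservation of numbers, in the spirit of~\secref{sec:cycles} and of the Bernstein--Kushnirenko discussion of~\secref{sec:bkt}. Embed $L$ in an irreducible one-parameter family $\{L_t\}_{t\in T}$ of affine-linear subspaces of codimension $c$ with $L_{t_0}=L$ and $L_t$ generic for $t\neq t_0$. By the very definition of $\deg V$ in $M_0$ (see~\secref{sec:cycles}), for generic $t$ the intersection $V\cap L_t$ is finite of total multiplicity $\deg V$. Since $p$ is an isolated point of $V\cap L_{t_0}$, conservation of numbers applied to the flat one-parameter family cutting out $(V\cdot L_t)$ near $p$ shows that exactly $i(p;V\cdot L;M_0)$ of these points converge to $p$ as $t\to t_0$; being a subset of the $\deg V$ points of $V\cap L_t$, this yields $i(p;V\cdot L;M_0)\le\deg V$. (Equivalently, one may invoke the refined Bézout inequality of~\cite{fulton:it}, or pass to a projective, respectively toric, compactification of $M_0$ and apply the classical Bézout bound.) Summing over $i$ with $p\in V_i$ then gives $i\bigl(p;\bpv[j](X^r)_0\cdot\las[n-r-k](p)_0;M_0\bigr)\le\sum_{i:\,p\in V_i}n_i\deg V_i=\degf_{\bpv[j](X^r)_0}(p)$, as required.

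The only delicate point is this last claim: one must arrange the family $\{L_t\}$ so that $p$ stays an isolated point of $V\cap L_{t_0}$ while the intersection is finite of the expected size for generic $t$, so that conservation of numbers genuinely applies at $p$. This is the same kind of semicontinuity argument already used repeatedly in~\secref{sec:prelims}; the rest is routine dimension bookkeeping together with an appeal to the already-established Theorem~\ref{thm:fiber-good-point-bd}.
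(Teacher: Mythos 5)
Your reduction of the pointwise inequality to the estimate $i(p;\bpv[n-r-k+1](X^r)_0\cdot\las[n-r-k](p)_0;M_0)\le\degf_{\bpv[n-r-k+1](X^r)_0}(p)$ is correct (and your Bézout/conservation-of-numbers argument for $i(p;V\cdot L;M_0)\le\deg V$ is fine), but that is exactly the easy ``simple observation'' and not where the content of the theorem lies. The genuine gap is a quantifier swap. Theorem~\ref{thm:fiber-good-point-bd} is stated, and is only proved, for a \emph{finite} set $S$ of good points, with $\vell$ chosen sufficiently generic \emph{depending on $S$}: the admissible $\vell$ come from Proposition~\ref{prop:pv-proper} (transversality to stratifications constructed around the given points, properness of the intersection with $\las[n-r-k](p)_0$ at $p$, and the identification of $\bpv[n-r-k+1](X^r)$ with the polar variety near $p$), all of which are local conditions at the points of $S$. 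In Theorem~\ref{thm:fiber-uniform-bd} the quantifiers are reversed: one fixed $\vell$ must work for \emph{every} good $p\in M_0$, an infinite (typically uncountable) set. You cannot intersect the genericity conditions over all such $p$, so for a fixed $\vell$ and an arbitrary good point you do not even know that $b_k(F^r_p)$ is bounded by the local intersection number at $p$; Proposition~\ref{prop:polar-cells} itself requires $\vell$ generic relative to $p$. Hence ``deduce directly from Theorem~\ref{thm:fiber-good-point-bd}'' does not establish the uniform statement.

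The paper closes precisely this gap using the appendix. Set $f(p)=b_k(F^r_p)$ at good points and $0$ elsewhere; by Corollary~\ref{cor:degf-complexity} (together with the degree bounds of Propositions~\ref{prop:pv-deg} and~\ref{prop:pv-deg-affine}) the function $F_\vell:=\degf_{\bpv[n-r-k+1](X^r)_0}$ is upper semicontinuous of complexity bounded \emph{independently of $\vell$}, so Proposition~\ref{prop:sc-compact} produces a single finite test set $S\subset M_0$ such that, for any $\vell$, $f\rest S\le F_\vell\rest S$ implies $f\le F_\vell$ everywhere. One then chooses $\vell$ generic for that particular finite $S$, verifies the inequality on $S$ by Theorem~\ref{thm:fiber-good-point-bd} together with your intersection-number-versus-degree estimate, and the compactness statement propagates the bound to all of $M_0$. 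Your proposal is missing this semicontinuity/compactness step, which is the actual point of the theorem; to repair it, insert the appendix argument rather than appealing to Theorem~\ref{thm:fiber-good-point-bd} at every good point.
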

\begin{proof}
  The proof follows by application of the results from the appendix.
  Namely, consider function
  \begin{equation}
    f:M_0\to\N \qquad f(p)=\begin{cases}
      b_k(F^r_p) & \text{if $p$ is good} \\
      0 & \text{otherwise}
    \end{cases}
  \end{equation}
  By Corollary~\ref{cor:degf-complexity} the function
  $F_\vell := \degf_{\bpv[n-r-k+1](X^r)_0}$ has uniformly bounded
  complexity independent of $\vell$. By
  Proposition~\ref{prop:sc-compact} there exists a finite set
  $S\subset M_0$ such that for any $\vell$,
  $f\rest S\le F_\vell\rest S$ implies $f\le F_\vell$.

  Choose $\vell$ sufficiently generic so that Theorem~\ref{thm:fiber-good-point-bd}
  applies for that set $S$. We claim that $f\rest S\le F_\vell\rest S$. Indeed,
  the inequality is trivial for points $p\in S$ which are not good, and
  for good points it follows from~\eqref{eq:betti-good-point-bound} and the
  simple observation
  \begin{equation}
    i(p; \bpv[n-r-k+1](X^r)_0 \cdot \las[n-r-k](p)_0;M_0) \le
    \degf_{\bpv[n-r-k+1](X^r)_0}(p)
  \end{equation}
\end{proof}

\section{Multiplicity estimates}
\label{sec:mult-estimates}

In this section we turn to the subject of multiplicity estimates. Once
again we consider the ambient manifold $M=(\C^*)^n\times\C$ with
coordinate ring $R=\C[x_1^{\pm1},\ldots,x_n^{\pm1},e]$ or
$M=\C^n\times\C$ with coordinate ring $R=\C[x_1,\ldots,x_n,e]$, where
$e$ is viewed as the parameter of a deformation.

Consider a Laurent vector field $V$ and a Laurent polynomial $P$,
\begin{equation} \label{eq:VP-torus}
  \begin{split}
    V = \sum_{i=1}^n Q_i \pd{}{x_i} \qquad & Q_i\in\C[x_1^{\pm1},\ldots,x_n^{\pm1}], \\
                                          & P\in\C[x_1^{\pm1},\ldots,x_n^{\pm1}].
  \end{split}
\end{equation}
Denote by $\Delta(P)$ the Newton polytope of $P$, and by $\Delta(V)$
the Newton polytope of $V$, where to each monomial
$x^\alpha\pd{}{x_i}$ we associate the exponent of the monomial
$x^\alpha/x_i$ in $\Z^n$.

\subsection{The multiplicity cycles}

Recall the notations of~\secref{sec:milnor-mult}. Let $\l=(\vell,c)$
where $c$ denotes the parameters defining the deformation $P^c$ of $P$
given in Lemma~\ref{lem:gab-sard}. We will denote $P^c$ by $P^\l$ to
simplify the notation.

We start by defining a collection of algebraic cycles which play a key
role in our multiplicity estimates.

\begin{Def}\label{def:mc}
  For $k=0,\ldots,n-1$ and $\vell$ as in~\eqref{eq:vell-def}, we
  define the $k$-th \emph{multiplicity cycle} of the deformation
  $P^\l$, denoted $\mc^k(P^\l)$, to be the $k$-cycle in $M_0$
  given by
  \begin{equation}
    \mc^k(P^\l) := \sum_{r=1}^{n-k} \bpv[k+1](X^r)_0
  \end{equation}
\end{Def}

The motivation for this definition becomes apparent in light of the
following theorem, describing the behavior of the multiplicity function
$\mult_p^V P$ in terms of the multiplicity cycles.

\begin{Thm}\label{thm:mult-mc-bound}
  Let $S\subset M_0$ be a finite collection of points, and assume that
  for every $p\in S$ the vector field $V$ is non-singular and
  $\mult_p^V P<\infty$. Fix $\l$ sufficiently generic with respect to
  $S$.

  Then for any $p\in S$ we have
  \begin{equation} \label{eq:mult-mc-bound}
    \mult_p^V P \le \sum_{k=0}^{n-1} i(p; \mc^k(P^\l)\cdot\las(p);M_0) 
  \end{equation}
\end{Thm}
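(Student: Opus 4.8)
The plan is to combine Gabrielov's Theorem~\ref{thm:gab-main} with the Betti number estimate of Theorem~\ref{thm:fiber-good-point-bd}. By Lemma~\ref{lem:gab-sard}, for generic $\l=(\vell,c)$ the deformation $P^\l_e(x)=P(x)+eP^c(x)$ has the property that each $X^r_\e$ is effectively smooth and $(n-r)$-dimensional in a neighborhood of every point of $S$, and is empty for $r>n$; in particular every $p\in S$ is a good point for each $X^r$. Thus the sum in~\eqref{eq:mult-sum-chi} is finite and ranges only over $r=1,\ldots,n$, and $F^r_p$ is the Milnor fiber of a deformation whose nonzero fibers are smooth of dimension $d=n-r$.

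First I would bound $\chi(F^r_p)$ by $\sum_{k=0}^{n-r} b_k(F^r_p)$ (the alternating sum is dominated in absolute value by the sum of Betti numbers, and since we only need an upper bound on $\mult_p^V P$ this crude estimate suffices). Then Theorem~\ref{thm:fiber-good-point-bd}, applied with the finite good set $S$ and for $\vell$ sufficiently generic, gives for each $p\in S$ and each $k=0,\ldots,n-r$
\begin{equation}
  b_k(F^r_p) \le i(p;\bpv[n-r-k+1](X^r)_0\cdot\las[n-r-k](p)_0;M_0).
\end{equation}
Summing over $r$ and $k$ and over all points of $S$, and reindexing with $j=n-r-k$ (so that $j$ runs over $0,\ldots,n-1$ as $r$ and $k$ vary, with $1\le r\le n-j$), we obtain
\begin{equation}
  \mult_p^V P \le \sum_{r=1}^{n} \sum_{k=0}^{n-r} i(p;\bpv[n-r-k+1](X^r)_0\cdot\las[n-r-k](p)_0;M_0)
  = \sum_{j=0}^{n-1} \sum_{r=1}^{n-j} i(p;\bpv[j+1](X^r)_0\cdot\las(p)_0;M_0).
\end{equation}

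Finally I would collect the inner sum using the definition $\mc^k(P^\l)=\sum_{r=1}^{n-k}\bpv[k+1](X^r)_0$. Since the intersection product is additive in each argument, and since for generic $\vell$ each $\bpv[k+1](X^r)_0$ meets $\las(p)_0$ properly at $p$ (by Proposition~\ref{prop:pv-proper}, as $\las(p)_0$ agrees with $\las[k](p)_0$ restricted to $e=0$), we have $\sum_{r=1}^{n-k} i(p;\bpv[k+1](X^r)_0\cdot\las(p)_0;M_0)=i(p;\mc^k(P^\l)\cdot\las(p)_0;M_0)$, which yields~\eqref{eq:mult-mc-bound}. The main obstacle is bookkeeping the genericity of $\l$: one must choose $c$ so that Lemma~\ref{lem:gab-sard} holds simultaneously at all points of $S$, and then choose $\vell$ generic enough that Theorem~\ref{thm:fiber-good-point-bd} applies to this same $S$ — these choices are made in sequence ($c$ first, then $\vell$), and one should check that the $\ell_i$'s of~\eqref{eq:vell-def} used for the polar varieties are the same as those entering $\las[k]$ and that the proper-intersection conclusions of Proposition~\ref{prop:pv-proper} survive after passing from $\las[k](p)$ to its slice at $e=0$.
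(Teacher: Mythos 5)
Your proposal is correct and follows essentially the same route as the paper: Lemma~\ref{lem:gab-sard} to make every $p\in S$ a good point with $F^r_p=\emptyset$ for $r>n$, Theorem~\ref{thm:gab-main} plus the bound of $\chi$ by the sum of Betti numbers, the intersection-number bound of Theorem~\ref{thm:fiber-good-point-bd}, and finally the resummation into the cycles $\mc^k(P^\l)$ using additivity of the intersection product. Your closing remarks on sequencing the genericity choices ($c$ then $\vell$) and on properness of the intersections are exactly the bookkeeping the paper handles via Proposition~\ref{prop:pv-proper}, so nothing is missing.
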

\begin{proof}
  By Lemma~\ref{lem:gab-sard} we may assume that each $p\in S$ is
  a good point of the corresponding deformation $P^\l$, and $F^r_p$ is empty
  for $r>n$. Therefore, for any $p\in S$ we have
  \begin{multline}
    \mult_p^V P \ot=i \sum_{r=1}^\infty \chi(F^r_p) \ot={ii} \sum_{r=1}^n \chi(F^r_p) 
    \ot\le{iii} \sum_{r=1}^n \sum_{k=0}^{n-r} b_k(F^r_p) \\
    \ot\le{iv} \sum_{r=1}^n \sum_{k=0}^{n-r} i(p; \bpv[n-r-k+1](X^r)_0 \cdot \las[n-r-k](p)_0;M_0) \\
    \ot\le{v} \sum_{k=0}^{n-1} i(p; \mc^k(P^\l)\cdot\las(p);M_0)
  \end{multline}
  where (i) follows from Theorem~\ref{thm:gab-main}; (ii) follows
  since $F^r_p$ is empty for $r>n$; (iii) follows since the Euler
  characteristic is bounded by the sum of the Betti numbers, and the
  $k$-th Betti number of $F^r_p$ vanishes for $k>n-r$; (iv) follows
  from Theorem~\ref{thm:fiber-uniform-bd}; and (v) is a
  re-summation.
\end{proof}

Next, we give a bound that holds uniformly at all points $p\in M_0$
where $V$ is non-singular and $\mult_p^V P<\infty$.

\begin{Thm}\label{thm:mult-mc-uniform-bound}
  There exists $\l$ such that the following holds (in fact, for any
  sufficiently generic choice of $\l$): for every point $p\in M_0$
  where $V$ is non-singular and $\mult_p^V P<\infty$ we have
  \begin{equation} \label{eq:mult-mc-uniform-bound}
    \mult_p^V P \le \sum_{k=0}^{n-1} \degf_{\mc^k(P^\l)} (p).
  \end{equation}
\end{Thm}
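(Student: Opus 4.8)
The plan is to derive Theorem~\ref{thm:mult-mc-uniform-bound} from Theorem~\ref{thm:mult-mc-bound} by precisely the compactness argument used to pass from Theorem~\ref{thm:fiber-good-point-bd} to Theorem~\ref{thm:fiber-uniform-bd}: one replaces the finite set $S$ appearing there by a finite set on which a pointwise check suffices, exploiting the fact that the right-hand side of~\eqref{eq:mult-mc-uniform-bound} is of "bounded algebraic complexity" uniformly in the auxiliary genericity parameter $\l$.

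Concretely, I would first introduce $f\colon M_0\to\N$, setting $f(p)=\mult_p^V P$ when $V$ is non-singular at $p$ and $\mult_p^V P<\infty$, and $f(p)=0$ otherwise. The locus where $V$ is singular is algebraic, and the locus where $P$ vanishes identically along $\gamma_p$ is the common zero set of $P,VP,V^2P,\dots$, hence of finitely many of these by Noetherianity, so it too is algebraic; away from this bad locus $\mult_p^V P$ is the standard upper-semicontinuous constructible multiplicity function. Thus $f$ is of the type handled by the appendix. On the other side, put $F_\l:=\sum_{k=0}^{n-1}\degf_{\mc^k(P^\l)}$. By Propositions~\ref{prop:pv-deg} and~\ref{prop:pv-deg-affine} the degrees of the cycles $\bpv[k+1](X^r)_0$, and hence of $\mc^k(P^\l)$, are bounded by quantities depending only on $n$ and the Newton polytopes of $P$ and $V$, not on $\l$; so by Corollary~\ref{cor:degf-complexity} the family $\{F_\l\}_\l$ has uniformly bounded complexity, and Proposition~\ref{prop:sc-compact} yields a finite set $S\subset M_0$ such that, for every $\l$, $f\rest S\le F_\l\rest S$ implies $f\le F_\l$ on all of $M_0$.

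It then remains to choose $\l$ sufficiently generic — with genericity allowed to depend on $S$ — so that Theorem~\ref{thm:mult-mc-bound} applies to the finite subset $S'\subset S$ of points at which $V$ is non-singular and $\mult_p^V P<\infty$, and to verify $f\rest S\le F_\l\rest S$. At a point $p\in S\setminus S'$ this is trivial since $f(p)=0$. At $p\in S'$, Theorem~\ref{thm:mult-mc-bound} gives $\mult_p^V P\le\sum_{k=0}^{n-1} i(p;\mc^k(P^\l)\cdot\las(p);M_0)$, and for each $k$ the affine space $\las(p)$ has codimension $k$ while $\mc^k(P^\l)$ has pure dimension $k$, so the local intersection multiplicity at $p$ is at most $\deg\bigl(\mc^k(P^\l)\cdot\las(p)\bigr)\le\deg\mc^k(P^\l)=\degf_{\mc^k(P^\l)}(p)$ whenever $p$ lies on $\mc^k(P^\l)$, and both quantities vanish otherwise. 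Summing over $k$ gives $f(p)\le F_\l(p)$, hence $f\rest S\le F_\l\rest S$, and Proposition~\ref{prop:sc-compact} then gives $f\le F_\l$ everywhere, which is exactly~\eqref{eq:mult-mc-uniform-bound}.

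The only genuinely delicate points I foresee are the bookkeeping of the appendix hypotheses: one must confirm that truncating the multiplicity function to $0$ on the algebraic bad locus still produces a function to which Proposition~\ref{prop:sc-compact} applies, and that the estimate $\deg(C\cdot L)\le\deg C$ invoked above for the non-generic linear slice $L=\las(p)$ is legitimate — which it is, by lower-semicontinuity of degree along the flat family interpolating $\las(p)$ with a generic codimension-$k$ plane, exactly as recorded in~\secref{sec:cycles}. Beyond that, the argument is a verbatim analogue of the proof of Theorem~\ref{thm:fiber-uniform-bd}.
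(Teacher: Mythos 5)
Your proposal is correct and follows essentially the same route as the paper: define the truncated multiplicity function $f$, invoke Corollary~\ref{cor:degf-complexity} and Proposition~\ref{prop:sc-compact} to get a finite test set $S$ uniform in $\l$, then choose $\l$ generic for $S$ and reduce to Theorem~\ref{thm:mult-mc-bound} together with the bound $i(p;\mc^k(P^\l)\cdot\las(p);M_0)\le\degf_{\mc^k(P^\l)}(p)$. Your extra remarks (constructibility of $f$, which Proposition~\ref{prop:sc-compact} does not actually require since it only asks for a bounded function, and the semicontinuity justification of the intersection-number bound) are harmless elaborations of steps the paper treats as immediate.
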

\begin{proof}
  To show that for a sufficiently generic $\l$ the
  bound~\eqref{eq:mult-mc-uniform-bound} holds uniformly over the
  points $p\in M_0$ where $V$ is non-singular and the multiplicity is
  finite, we proceed as in the proof of
  Theorem~\ref{thm:fiber-uniform-bd}. Consider the function
  \begin{equation}
    f:M_0\to\N \qquad f(p)=\begin{cases}
      \mult_p^V P & \text{if $V(P)\neq0$ and $\mult_p^V P<\infty$} \\
      0 & \text{otherwise}
    \end{cases}
  \end{equation}

  By Corollary~\ref{cor:degf-complexity} the function
  $F_\vell := \degf_{\mc^k(P^\l)}$ has uniformly bounded
  complexity independent of $\l$. By
  Proposition~\ref{prop:sc-compact} there exists a finite set
  $S\subset M_0$ such that for any $\l$,
  $f\rest S\le F_\l\rest S$ implies $f\le F_\l$.

  Choose $\l$ sufficiently generic so that Theorem~\ref{thm:mult-mc-bound}
  applies for the set $S$. We claim that $f\rest S\le F_\vell\rest S$. Indeed,
  the inequality is trivial for points $p\in S$ where $V$ is singular or
  where the multiplicity is infinite, and
  for the remaining points it follows from~\eqref{eq:mult-mc-bound} and the
  simple observation
  \begin{equation}
    i(p; \mc^k(P^\l)\cdot\las(p);M_0) \le \degf_{\mc^k(P^\l)}.
  \end{equation}
\end{proof}

\begin{Rem}\label{rem:even-betti}
  In fact, since we are interested in \emph{upper bounds} for the
  \emph{Euler characteristic}, it would be reasonable to include in
  the definition of multiplicity cycles only those polar varieties
  that contribute Betti numbers of even dimension, or even include
  those that contribute Betti numbers of odd dimension with a negative
  sign. This would improve many of our multiplicity estimates roughly
  by a factor of two. We have avoided this in the present paper in
  order to simplify the notation. However,
  see~\secref{sec:improving-gr} for an illustration.
\end{Rem}

We now give estimates on the degrees of the multiplicity cycles in the
torus and affine cases. We will assume for simplicity that
$(n-1)\Delta_x\subset\Delta(P)$ where $\Delta_x$ denotes the standard
simplex in the $x$-variables. Under this assumption we have
$\Delta(P^\l)=\Delta(P)$.

The following estimates are obtained in a
straightforward manner from the corresponding propositions for polar
varieties, namely Proposition~\ref{prop:pv-deg} and
Proposition~\ref{prop:pv-deg-affine}, by noting that the equations
defining $X^r$ have Newton polygons contained in
$\Delta(P)+(r-1)\Delta(V)$.

\begin{Prop} \label{prop:mc-degree}
  Suppose that a translate of $(n-1)\Delta_x$ is contained in
  $\Delta(P)$. Then we have the bound
  \begin{equation}
    \begin{split}
      \deg\mc^k(P^\l) &\le \sum_{r=1}^{n-k} \tbinom{n}{r+k} n!
        \qmi_k(\Delta(P)+(r-1)\Delta(V)+\Delta_x)  \\
      &< 2^n n! \qmi_k(\Delta(P)+(n-k-1)\Delta(V)+\Delta_x)
    \end{split}
  \end{equation}
  where $\deg$ denotes degree in $(\C^*)^n$, $\qmi$ denotes the
  quermassintegral, and $\Delta_x$ denotes the standard simplex in the
  $x$ variables.
\end{Prop}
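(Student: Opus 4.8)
The plan is to reduce everything to the degree estimate for a single refined polar variety, Proposition~\ref{prop:pv-deg}, and then assemble the pieces. Recall that by definition $\mc^k(P^\l) = \sum_{r=1}^{n-k} \bpv[k+1](X^r)_0$, so by additivity of degree under formal sums of cycles it suffices to bound each $\deg \bpv[k+1](X^r)_0$ separately and sum. For a fixed $r$, the variety $X^r$ is cut out by the equations $P^\l_e = V P^\l_e = \cdots = V^{r-1}P^\l_e = 0$, and since $V$ has Newton polytope $\Delta(V)$ (with the stated convention identifying $x^\alpha\pd{}{x_i}$ with $x^\alpha/x_i$), applying $V$ once to a Laurent polynomial with Newton polytope in some convex set $\Delta$ produces a Laurent polynomial with Newton polytope in $\Delta + \Delta(V)$. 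Hence $V^{i-1}P^\l_e$ has Newton polytope contained in $\Delta(P^\l) + (i-1)\Delta(V)$, and the convex hull of all $r$ of these, which is the $\Delta$ appearing in Proposition~\ref{prop:pv-deg} for this $X^r$, is contained in $\Delta(P^\l) + (r-1)\Delta(V)$. Under the hypothesis that a translate of $(n-1)\Delta_x$ lies in $\Delta(P)$, the genericity of the deformation parameters in Lemma~\ref{lem:gab-sard} does not enlarge the Newton polytope, so $\Delta(P^\l) = \Delta(P)$ and the bounding polytope is $\Delta(P) + (r-1)\Delta(V)$.

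Next I would apply Proposition~\ref{prop:pv-deg} with this $X^r$, with the index ``$k$'' of that proposition equal to $k+1$ here (so the polar variety is $\bpv[k+1]$ of dimension $k+1$, with $(k+1)-1 = k$-dimensional fibers). That gives
\begin{equation}
  \deg \bpv[k+1](X^r)_\e \le \tbinom{n}{r+k} n!\, \qmi_k(\Delta(P)+(r-1)\Delta(V)+\Delta_x),
\end{equation}
using monotonicity of the quermassintegral in its Minkowski-sum argument to replace the bounding polytope by $\Delta(P)+(r-1)\Delta(V)$. Summing over $r = 1, \ldots, n-k$ yields the first displayed inequality of Proposition~\ref{prop:mc-degree}. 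For the second, cruder bound I would use monotonicity of $\qmi_k$ once more to replace $(r-1)\Delta(V)$ by $(n-k-1)\Delta(V)$ in every summand (valid since $r \le n-k$ and $\Delta(V)$ contains the origin, or after a harmless translation), pulling the common term $\qmi_k(\Delta(P)+(n-k-1)\Delta(V)+\Delta_x)$ out of the sum, and then bound $\sum_{r=1}^{n-k}\tbinom{n}{r+k} \le \sum_{j=0}^{n}\tbinom{n}{j} = 2^n$.

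I do not expect a serious obstacle; this is essentially bookkeeping on top of Proposition~\ref{prop:pv-deg}. The one point requiring a little care is the Newton polytope arithmetic: one must check that the convention associating $x^\alpha \pd{}{x_i}$ to the exponent of $x^\alpha/x_i$ makes $\Delta(V^{i-1}P)\subseteq \Delta(P)+(i-1)\Delta(V)$ literally correct (each application of $V$ produces terms that are sums over $i$ of $Q_i \cdot \partial P/\partial x_i$, and $\partial/\partial x_i$ lowers the $x_i$-exponent by one, exactly matching the $-1/x_i$ in the convention), and that translating $\Delta(P)$ to contain the origin — needed already inside Proposition~\ref{prop:pv-deg} — is harmless here because it only multiplies all defining equations by a common monomial and hence does not change $\bpv[k+1](X^r)$ as a cycle in $(\C^*)^n$. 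The affine analogue, should one want it, follows identically from Proposition~\ref{prop:pv-deg-affine} in place of Proposition~\ref{prop:pv-deg}, provided $\Delta(P)$ (and hence $\Delta(P)+(r-1)\Delta(V)$, assuming $\Delta(V)\subset\Z_{\ge0}^n$) is a convex co-ideal.
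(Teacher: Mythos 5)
Your proposal is correct and follows essentially the same route as the paper, whose proof is exactly the one-line reduction you spell out: bound the Newton polytopes of the equations $V^{i-1}P^\l_e$ defining $X^r$ by $\Delta(P)+(r-1)\Delta(V)$, apply Proposition~\ref{prop:pv-deg} (with its index shifted to $k+1$), sum over $r=1,\ldots,n-k$, and estimate $\sum_r\tbinom{n}{r+k}<2^n$ using monotonicity of the quermassintegral. Your added bookkeeping (the derivation convention for $\Delta(V)$, $\Delta(P^\l)=\Delta(P)$, and the harmlessness of translating by a monomial in $(\C^*)^n$) is exactly the ``straightforward'' verification the paper leaves implicit.
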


\begin{Prop} \label{prop:mc-degree-affine} Suppose that
  $\Delta\subset\Z_{\ge0}^n$ is a convex co-ideal containing
  $(n-1)\Delta_x$. Then we have the bound
  \begin{equation}
    \begin{split}
      \deg\mc^k(P^\l) &\le \sum_{r=1}^{n-k} \tbinom{n}{r+k} n!
        \qmi_k(\Delta(P)+(r-1)\Delta(V))  \\
      &< 2^n n! \qmi_k(\Delta(P)+(n-k-1)\Delta(V))
    \end{split}
  \end{equation}
  where $\deg$ denotes degree in $\C^n$, $\qmi$ denotes the
  quermassintegral.

  In particular, if $P$ is a polynomial of degree $d\ge n-1$ and $V$ is
  a polynomial vector field of degree $\delta$, then
  \begin{equation}
    \deg\mc^k(P^\l) < 2^n(d+(n-k-1)(\delta-1))^{n-k}
  \end{equation}
\end{Prop}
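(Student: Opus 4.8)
The plan is to reduce Proposition~\ref{prop:mc-degree-affine} to the already-established Proposition~\ref{prop:pv-deg-affine} exactly as the preceding paragraph in the paper indicates, and then to carry out the required quermassintegral computation in the case of degree bounds. First I would observe that the defining equations of $X^r$ are $P^\l, V P^\l, \ldots, V^{r-1} P^\l$, and that applying the Laurent vector field $V$ to a Laurent polynomial with Newton polytope contained in $\Delta$ produces a Laurent polynomial with Newton polytope contained in $\Delta + \Delta(V)$ (since each $Q_i \pd{}{x_i}$ shifts exponents by the exponent vector associated to $Q_i/x_i$, which lies in $\Delta(V)$). Hence $V^{j-1} P^\l$ has Newton polytope contained in $\Delta(P^\l) + (j-1)\Delta(V) = \Delta(P) + (j-1)\Delta(V)$, using the hypothesis that a translate of $(n-1)\Delta_x$ sits inside $\Delta(P)$ so that the generic deformation $P^\l$ does not enlarge the Newton polytope. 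Consequently the convex hull $\Delta$ of the Newton polytopes of the first $r$ equations is contained in $\Delta(P) + (r-1)\Delta(V)$.

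Next I would invoke Proposition~\ref{prop:pv-deg-affine} with this $\Delta$, together with monotonicity of the quermassintegral $\qmi_k$ in its convex-body argument, to get
\begin{equation}
  \deg \bpv[k+1](X^r)_0 \le \tbinom{n}{r+k} n!\, \qmi_k\big(\Delta(P)+(r-1)\Delta(V)\big).
\end{equation}
Here I must check the index bookkeeping: the refined polar variety appearing in $\mc^k(P^\l)$ is $\bpv[k+1](X^r)$, so in the notation of Proposition~\ref{prop:pv-deg-affine} one takes ``$k$'' there equal to $k+1$, producing $\tbinom{n}{r+(k+1)-1} = \tbinom{n}{r+k}$ and $\qmi_{(k+1)-1} = \qmi_k$, consistent with the statement. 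Summing over $r=1,\ldots,n-k$ and using again that $\qmi_k$ is monotone (so each summand is at most $\qmi_k(\Delta(P)+(n-k-1)\Delta(V))$) gives
\begin{equation}
  \deg\mc^k(P^\l) \le \sum_{r=1}^{n-k} \tbinom{n}{r+k} n!\, \qmi_k\big(\Delta(P)+(n-k-1)\Delta(V)\big),
\end{equation}
and bounding $\sum_{r=1}^{n-k}\tbinom{n}{r+k} \le \sum_{j=0}^n \tbinom{n}{j} = 2^n$ yields the displayed strict inequality $\deg\mc^k(P^\l) < 2^n n!\, \qmi_k(\Delta(P)+(n-k-1)\Delta(V))$.

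For the final ``in particular'' clause I would specialize to a polynomial $P$ of degree $d$ and vector field of degree $\delta$: then $\Delta(P)\subset d\Delta_x$ and $\Delta(V)\subset (\delta-1)\Delta_x$ (the $-1$ coming from the convention that $x^\alpha\pd{}{x_i}$ is recorded as $x^\alpha/x_i$), so by monotonicity $\qmi_k(\Delta(P)+(n-k-1)\Delta(V)) \le \qmi_k\big((d+(n-k-1)(\delta-1))\Delta_x\big)$. The one genuine computation is that $n!\,\qmi_k(t\Delta_x) = t^{n-k}$ for the standard simplex: by definition $\qmi_k(t\Delta_x) = V(\underbrace{t\Delta_x,\ldots,t\Delta_x}_{n-k},\underbrace{\Delta_x,\ldots,\Delta_x}_{k}) = t^{n-k} V(\Delta_x,\ldots,\Delta_x) = t^{n-k}\vol(\Delta_x) = t^{n-k}/n!$, using multilinearity and homogeneity of mixed volume and $\vol(\Delta_x)=1/n!$. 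Substituting $t = d+(n-k-1)(\delta-1)$ gives $\deg\mc^k(P^\l) < 2^n (d+(n-k-1)(\delta-1))^{n-k}$. The main (very mild) obstacle is simply making sure the index shifts between the two propositions match and that the Newton-polytope growth under $V$ is stated correctly; there is no real analytic difficulty, as all the hard work is in Proposition~\ref{prop:pv-deg-affine}.
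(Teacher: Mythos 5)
Your proposal is correct and follows essentially the same route the paper itself indicates: bound the Newton polytopes of the equations defining $X^r$ by $\Delta(P)+(r-1)\Delta(V)$, apply Proposition~\ref{prop:pv-deg-affine} with the index shift $k\mapsto k+1$ (giving $\tbinom{n}{r+k}$ and $\qmi_k$), sum over $r$, and evaluate $n!\,\qmi_k$ on a dilated standard simplex for the degree specialization. The only cosmetic point is that the strict inequality comes from $\sum_{r=1}^{n-k}\tbinom{n}{r+k}=\sum_{j=k+1}^{n}\tbinom{n}{j}\le 2^n-1$ (the $j\le k$ terms are absent), rather than from the bound $\le 2^n$ you quoted.
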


\subsection{Improving the estimates of Nesterenko, Gabrielov and Risler}
\label{sec:improving-ng}

In this section we show how Theorem~\ref{thm:mult-mc-uniform-bound} and
Proposition~\ref{prop:mc-degree-affine} imply a strengthening of the
results of Nesterenko~\cite{nesterenko:mult-estimates},
Gabrielov~\cite{gabrielov:mult} and Gabrielov and
Risler~\cite{gr:mult-c3}. We therefore restrict attention to the case
where $P,V$ given as in \eqref{eq:VP}. We fix $\l$ sufficiently
generic for the application of Theorem~\ref{thm:mult-mc-uniform-bound}.

\subsubsection{The case of a single point in arbitrary dimension}

We assume for simplicity of the formulation that $d\ge n-1$.
If $p\in\C^n$ is a non-singular point of $V$ and $\mult_p^V P<\infty$
then by Theorem~\ref{thm:mult-mc-uniform-bound} and
Proposition~\ref{prop:mc-degree-affine} we have
\begin{multline}
  \mult_p^V P \le \sum_{k=0}^{n-1} \degf_{\mc^k(P^\l)} (p)
   \le \sum_{k=0}^{n-1} \deg\mc^k(P^\l) \\
   \le 2^n \sum_{k=0}^{n-1} (d+(n-k-1)(\delta-1))^{n-k} 
   \le 2^{n+1} (d+(n-1)(\delta-1))^n 
\end{multline}
which improves the estimates of Nesterenko and Gabrielov for the
case of a single point.

\subsubsection{The case of a single point in $\C^3$}
\label{sec:improving-gr}

In~\cite{gr:mult-c3} Gabrielov and Risler considered the case $n=3$ in
detail using a different deformation technique. Their estimate, which
is the best estimate known for this particular case, is as follows
\begin{equation} \label{eq:gr-bound}
  \mult_p^V P \le d+2d(d+\delta-1)^2.
\end{equation}
A naive application of Theorem~\ref{thm:mult-mc-bound} does not yield
an improvement of this result. However, using the more refined
approach indicated in Remark~\ref{rem:even-betti} one can still obtain
an improvement using our method.

Assume for simplicity that $d\ge n-1=2$ (the remaining case $d=1$ can
be treated separately, for instance by reduction of dimension; we
leave the details for the reader). Then, in the notations
of~\secref{sec:milnor-mult} we have three Milnor fibers $F^{0,1,2}_p$
and by Remark~\ref{rem:even-betti} we are interested in an upper
bound for the sum of their even Betti numbers. Simple computations
using the corresponding polar varieties give
\begin{align*}
  b_0(F_p^0) &\le d \\
  b_0(F_p^1) &\le d(d+\delta-1) \\
  b_0(F_p^2) &\le d(d+\delta-1)(d+2\delta-2) \\
  b_2(F_p^0) &\le d(d-1)^2
\end{align*}
and accordingly,
\begin{equation}
  \mult_p^V P \le d\left[1+(d-1)^2+(d+\delta-1)(d+2\delta-1)\right]
\end{equation}
and it is a simple exercise, left for the reader, to verify that this
improves~\eqref{eq:gr-bound} for any $d,\delta$.

\subsubsection{The case of several points}

Moving now to the case of several points, let
$p_1,\ldots,p_\nu\in\C^n$ be non-singular points of $V$ and assume that
$\mult^V_{p_i}P<\infty$. Recall the notations of~\secref{sec:nesterenko}.
We consider first the case $\kappa=n$.

Let $Z$ denote any $k$-cycle in $\C^n$ and write $Z=Z_1+\ldots+Z_q$
where each $Z_i$ is a cycle supported on an irreducible variety
(possibly with a coefficient greater than 1). Then
\begin{equation}\begin{aligned}  
  \sum_{i=1}^\nu \degf_Z (p_i) &\le \sum_{j=1}^q \sum_{i=1}^\nu \degf_{Z_j} (p_i)
  \le \sum_{j=1}^q a(Z_j) \deg Z_j \\
  &\le (\max_j a(Z_j)) \sum_{j=1}^n \deg Z_j = a(Z) \deg Z
\end{aligned}\end{equation}
where $a(Z_j)$ denotes the number of points $p_i$ lying in $Z_j$, and
$a(Z)$ denotes the maximal number of points $p_i$ lying in one of the
irreducible components of $Z$. We now proceed with the multiplicity
estimate, again relying on Theorem~\ref{thm:mult-mc-uniform-bound} and
Proposition~\ref{prop:mc-degree-affine}
\begin{equation} \label{eq:our-multpoint-est}
  \begin{aligned}
    \sum_{i=1}^\nu \mult_{p_i}^V P &\le \sum_{i=1}^\nu \sum_{k=0}^{n-1} \degf_{\mc^k(P^\l)} (p_i) \\
    &\le \sum_{k=0}^{n-1} a(\mc^k(P^\l)) \deg \mc^k(P^\l) \\
    &\le 2^n \sum_{k=0}^{n-1} a(\mc^k(P^\l))
    (d+(n-k-1)(\delta-1))^{n-k}
  \end{aligned}
\end{equation}
and noting that $\mc^k(P^\l)$ does indeed have degree of
the order $O(d^{n-k})$ with respect to $d$, we obtain Nesterenko's
estimate (with improved constants).

Finally, we consider the case $\kappa<n$. That is, we now assume that
all points $p_i$ belong to a single trajectory $\gamma$ which has
transcendence degree $\kappa$. Let $Y\subset\C^n$ denote the algebraic
closure of $\gamma$. Then $\dim Y=\kappa$ and $Y$ is invariant under
the flow of $V$ (since it has a Zariski dense subset, namely
$\gamma$, which is invariant). Since the flow of $V$ maps $Y$ to
itself and maps the ambient space $\C^n$ biholomorphically to itself
(whenever defined), and since the singular part of an analytic set is
a holomorphic invariant, it follows that the singular part $\Sing V$
is invariant under the flow of $V$ as well.

We claim that the points $p_i$ belong to the smooth part of $Y$.
Indeed, suppose that some point $p_i$ belongs to $\Sing Y$. Then since
$\Sing Y$ is invariant under the flow of $V$, it follows that the germ
$\gamma_{p_i}$ is contained in $\Sing Y$. Since we assume that all
points $p_i$ belong to a single trajectory $\gamma$, by analytic
permanence it follows that $\gamma\subset\Sing Y$, contradicting our
assumption that $Y$ is the Zariski closure of $\gamma$.

One can now carry out all preceding computations in the ambient space
$Y$ instead of $\C^n$: the only assumption which is needed is the
smoothness of the ambient space at the points being considered.
Naturally, in the estimates of the degrees of the corresponding
multiplicity cycles, the degrees of the equations defining $Y$ would
play a role giving rise to existential constants as in Nesterenko's
result. However, these existential constants do not affect the
asymptotic dependence on $d$, which agrees with Nesterenko's estimate.
We omit the details of this computation.

\begin{Rem} \label{rem:small-kappa}
  In the case $\kappa=n$, the constants appearing in our result, as
  well as Nesterenko's, are explicit. In the case $\kappa<n$ the
  constants, for both proofs, depend on the algebraic complexity (for
  instance the degree) of the Zariski closure $Y$. This degree cannot
  in general be estimated in terms of $n,d,\delta$, as illustrated by
  the vector field $r x \partial_x+ s y\partial_y$ which admits a trajectory $\{x^s=y^r\}$
  of degree depending on the coefficients $r,s$.

  However, using our method one can obtain estimates with explicit
  constants --- albeit involving terms of order up to $d^n$ --- even
  when $\kappa<n$. Indeed, nowhere in the derivation
  of~\eqref{eq:our-multpoint-est} did we use the assumption
  $\kappa=n$. On the other hand, Nesterenko's approach appears to be
  dependent in a more essential way on the assumption $\kappa=n$, and
  it is not clear that it can be used to produce explicit bounds, even
  ones allowing terms of order $d^n$, when $\kappa<n$.
\end{Rem}

\subsection{Concluding remarks and some directions for future research}

Beyond the general type of multiplicity estimates considered in this
paper, many different forms have been treated in the literature. It
would be interesting to see if the methods used in this paper could
be generalized to these contexts. We list a few examples below.

Many results have been obtained for the case when the ambient manifold
is a commutative algebraic group, the vector field is an invariant
field for the group, and the set of points is a ``cube'' of a
specified dimension and length. For a survey of some of these results
and their applications in transcendental number theory
see~\cite{masser:zero-est-survey}.

Another possible generalization is for the case of analytic
trajectories at singular points of the vector field $V$.
In~\cite{nesterenko:modular-trans}, Nesterenko considers a singular
vector field satisfying the additional ``D-property''. Under this
extra assumption, Nesterenko again obtains estimates which are sharp
up to a multiplicative constant with respect to $d$. This result and
various generalizations also play an important role in transcendental
number theory.

Finally, in~\cite{gk:mult} Gabrielov and Khovanskii consider
multiplicity estimates in several dimensions. Specifically, they
consider a tuple of commuting vector fields $V_1,\ldots,V_m$ defining
an integral manifold $\cL$ of dimension $m$, and a tuple of $m$
polynomials $P_1,\ldots,P_m$. They give an estimate for the maximal
multiplicity of an isolated common zero $P_1=\cdots=P_m=0$. Our method
does not directly extend to this generality due to some technical
difficulties (specifically, the literal analog of
Lemma~\ref{lem:gab-sard} fails), but it would be interesting to check
whether similar ideas can be used to improve this result.

\appendix

\section{A compactness property for semicontinuous bounds}
\label{sec:appendix}
  
In this appendix we will assume for simplicity of the formulation that
the ambient variety $M$ is the affine space $\C^n$ or the torus
$(\C^*)^n$, though the ideas can be carried out verbatim in a much more
general context.

Recall that a function $F:M\to\N$ is said to be (algebraic)
\emph{upper semicontinuous} if the sets
$F_{\ge n}:=F^{-1}([n,\infty))$ are closed algebraic varieties for
each $n\in\N$. We will say that $F$ has \emph{complexity bounded by
  $D$} if moreover, all of these sets can be defined by equations of
degree at most $D$.

\begin{Prop} \label{prop:sc-compact}
  Let $D\in\N$ and $f:M\to\N$ an arbitrary bounded function. Then
  there exists a finite set of points $P\subset M$ such that for any
  upper semicontinuous function $F$ of complexity bounded by $D$,
  \begin{equation}
    f\rest P \le F\rest P \implies f \le F.
  \end{equation}
\end{Prop}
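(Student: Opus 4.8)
The plan is to exploit the bounded complexity hypothesis to produce a \emph{finite} combinatorial parametrization of all possible functions $F$, and then choose a test point for each relevant configuration. Since $f$ is bounded, say $f\le N$, only the sets $F_{\ge 1},\ldots,F_{\ge N}$ matter for the implication $f\le F$: the inequality $f\le F$ holds if and only if $f^{-1}([n,\infty))\subseteq F_{\ge n}$ for every $n\le N$, i.e. every point $p$ with $f(p)\ge n$ lies in $F_{\ge n}$. So it suffices to find, for each $n\le N$, a finite set $P_n$ such that $f\rest{P_n}\le F\rest{P_n}$ forces $f^{-1}([n,\infty))\subseteq F_{\ge n}$; then take $P=\bigcup_n P_n$.

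Fix $n$. Here is the key point: the family of algebraic subsets of $M$ definable by equations of degree at most $D$ is parametrized by a Noetherian (in fact Chevalley-constructible) family — concretely, the coefficient vectors of a fixed-size system of degree-$\le D$ polynomials live in an affine space, and one may stratify this parameter space into finitely many locally closed pieces over which the defining ideal (hence the variety) behaves uniformly. By Noetherianity of $M$, the collection of all such varieties has no infinite strictly decreasing chains; more to the point, one shows that there is a finite set $T_n$ of varieties of complexity $\le D$ with the property that every variety $W$ of complexity $\le D$ contains \emph{some} member of $T_n$ that it contains, and such that if $W$ fails to contain $f^{-1}([n,\infty))$ then some witness point can be chosen from a finite list. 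The cleanest way to phrase this: consider the (finitely many, up to the stratification above) minimal varieties of complexity $\le D$ containing $\overline{f^{-1}([n,\infty))}$ — there may be infinitely many, but one instead argues directly that the contrapositive is controlled by finitely many points, as follows. If $f^{-1}([n,\infty))\not\subseteq F_{\ge n}$, pick $p$ with $f(p)\ge n$, $p\notin F_{\ge n}$. We must pre-select a finite set of such potential witnesses $P_n$ so that whatever $F_{\ge n}$ is, if it misses $f^{-1}([n,\infty))$ then it already misses a point of $P_n$. This is where the degree bound is essential: a variety of degree bounded in terms of $D$ that contains $k$ sufficiently generic points of $f^{-1}([n,\infty))$ already contains all of it, by an interpolation/Bézout argument. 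So I would choose $P_n\subset f^{-1}([n,\infty))$ to be any finite subset large enough that no complexity-$\le D$ variety can contain all of $P_n$ without containing the whole Zariski closure of $f^{-1}([n,\infty))$; such a finite $P_n$ exists by Noetherianity of the ideal of that closure together with effective bounds coming from $D$ — indeed, one may take $P_n$ to be a set of points whose vanishing ideal, intersected with the space of degree-$\le D$ polynomials, equals $I(\overline{f^{-1}([n,\infty))})$ in that degree, which requires only finitely many points since the relevant linear system is finite-dimensional. Then $f\rest{P_n}\le F\rest{P_n}$ gives $P_n\subseteq F_{\ge n}$, hence $F_{\ge n}$ contains $\overline{f^{-1}([n,\infty))}\supseteq f^{-1}([n,\infty))$, as needed.

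The main obstacle — and the step deserving the most care — is the reduction to finitely many witness points: making precise that a degree-$\le D$ variety is determined, as far as ``containing $f^{-1}([n,\infty))$'' is concerned, by its restriction to a fixed finite set. This is a statement about the finite-dimensionality of the space of degree-$\le D$ forms and the fact that vanishing at enough points of a set $S$ forces vanishing on $\overline S$ within that bounded-degree space; I would isolate it as a small lemma (a linear-algebra/Noetherianity argument on the truncated coordinate ring $R_{\le D}$), and then the proposition follows by taking $P=\bigcup_{n=1}^{N}P_n$ with $N=\max f$.
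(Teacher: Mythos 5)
Your proposal is correct and, once you strip away the preliminary detours (Noetherian stratifications, B\'ezout-type interpolation), it is essentially the paper's own proof: reduce to the superlevel sets $f_{\ge n}$, and for each one choose finitely many points of $f_{\ge n}$ whose evaluation functionals cut out, inside the finite-dimensional space of polynomials of degree at most $D$, the same subspace as all of $f_{\ge n}$, so that any set defined by degree-$\le D$ equations containing those points must contain $f_{\ge n}$. The linear-algebra lemma you isolate at the end is exactly the mechanism used in the paper, so no further comparison is needed.
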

\begin{proof}
  Denote by $N$ an upper bound for $f$. Then $f\le F$ if and only if
  $f_{\ge i}\subset F_{\ge i}$ for $i=1,\ldots,N$. Thus it will
  suffice to construct a finite set $P_i\subset f_{\ge i}$ such that for any set $S$ of
  complexity bounded by $D$,
  \begin{equation}
    P_i \subset S \implies f_{\ge i}\subset S
  \end{equation}
  and take $P=\cup_{i=1}^N P_i$.

  Let $L$ denote the linear space of polynomials of degree bounded by
  $D$ on $M$. For any $p\in M$ let $\phi_p:L\to\C$ denote the
  functional of evaluation at $p$. Finally, for any set $P\subset M$
  denote by $L_P\subset L$ the linear subspace of polynomials which vanish at
  every point of $P$.

  We need to construct a finite set $P_i\subset f_{\ge i}$ with
  $L_{P_i}=L_{f_{\ge i}}$. This is clearly possible. Indeed,
  $L_{f_{\ge i}}$ is the kernel of the set of functionals
  $\{\phi_p:p\in f_{\ge i}\}$. Since $L_{f_{\ge i}}$ has finite
  codimension in $L$, one can choose a finite subset $P_i$ (in fact,
  of size equal to this codimension) of functionals whose kernel,
  $L_{P_i}$ agrees with $L_{f_{\ge i}}$. This concludes the proof.
\end{proof}

The proofs of the following simple lemmas are left for the reader.

\begin{Lem}\label{lem:sc-sum}
  Let $F_i,i=1,\ldots,N$ be upper semicontinuous functions with
  complexity $D_i$ and bounded by $B_i$. Then $\sum_{i=1}^N F_i$ is an
  upper semicontinuous function with complexity bounded by a number
  depending only on $D_1,\ldots,D_N$ and $B_1,\ldots,B_N$.
\end{Lem}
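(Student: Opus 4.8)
The plan is to express each superlevel set of $\sum_i F_i$ as an explicit finite union of finite intersections of superlevel sets of the individual $F_i$, and then to track the degrees of the defining equations through this construction. It suffices to treat general $N$ directly (one could also reduce to $N=2$ and induct, but this saves nothing).

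The first point I would establish is that each $F_i$ is automatically bounded by a constant $N_i$ depending only on $n$ and $D_i$. Indeed, $(F_i)_{\ge 1}\supseteq(F_i)_{\ge 2}\supseteq\cdots$ is a descending chain of subvarieties of $M$, each cut out by equations of degree at most $D_i$; since a variety defined by equations of degree at most $D_i$ has degree at most $D_i^{\dim M}$ (a Bezout-type bound), an effective form of the Noetherian property bounds the length of such a chain, so it reaches the empty set after at most some $N_i=N_i(n,D_i)$ steps. Hence $\sum_i F_i$ is bounded by $B:=\sum_i N_i$, a constant depending only on $n$ and $D_1,\dots,D_N$.

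Next I would observe that for $m>B$ the set $\bigl(\sum_i F_i\bigr)_{\ge m}$ is empty, while for $m\le B$ one has
\[
  \Bigl(\sum_i F_i\Bigr)_{\ge m}=\bigcup_{\substack{i_1+\dots+i_N=m\\ i_j\ge0}}\ \bigcap_{j=1}^N (F_j)_{\ge i_j},
\]
since $\sum_j F_j(p)\ge m$ exactly when $F_j(p)\ge i_j$ for some composition $m=i_1+\dots+i_N$. Each intersection on the right is cut out by the union of the defining equations of the $(F_j)_{\ge i_j}$, hence by equations of degree at most $\max_j D_j$; and a union $V_1\cup\dots\cup V_t$ of varieties $V_s=V(g_{s,1},\dots)$ is the common zero set of all products $g_{1,k_1}\cdots g_{t,k_t}$, of degree at most $t\cdot\max_j D_j$. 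Taking $t=\binom{B+N-1}{N-1}$ as a bound on the number of compositions of $m$ into $N$ parts, I would conclude that every superlevel set of $\sum_i F_i$ is a variety defined by equations of degree at most $D':=\binom{B+N-1}{N-1}\max_j D_j$, which depends only on $n$, $N$ and $D_1,\dots,D_N$. This gives at once upper semicontinuity of $\sum_i F_i$ and the complexity bound.

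The only ingredient that is not pure degree bookkeeping is the uniform boundedness of the $F_i$ in terms of $n$ and $D_i$, so I expect that to be the main (though still routine) obstacle. In the applications in this paper the relevant functions are manifestly bounded — for instance $\degf_V$ is bounded by $\deg V$ — so this subtlety can in any case be bypassed there.
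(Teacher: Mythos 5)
The paper itself gives no argument for this lemma (it is ``left for the reader''), so your proposal has to be judged on its own merits. Its skeleton --- writing $\bigl(\sum_i F_i\bigr)_{\ge m}$ as a union over compositions $m=i_1+\dots+i_N$ of the intersections $\bigcap_j (F_j)_{\ge i_j}$, and bounding degrees of a bounded union by taking products of defining equations --- is fine. The genuine gap is your first step: it is \emph{not} true that an upper semicontinuous function of complexity bounded by $D_i$ is bounded by a constant $N_i(n,D_i)$. Effective Noetherianity bounds the number of \emph{strict} inclusions in the chain $(F_i)_{\ge 1}\supseteq (F_i)_{\ge 2}\supseteq\cdots$, not the number of steps until the chain becomes empty, and the chain may sit at a fixed nonempty variety for arbitrarily many steps. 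Concretely, fix a point $z\in M$ and an integer $K$, and let $F(z)=K$ and $F(p)=0$ for $p\neq z$: every superlevel set is $M$, $\{z\}$ or $\emptyset$, so $F$ has complexity bounded by $1$ for every $K$, while $\sup F=K$ is arbitrary. Consequently your constant $B=\sum_i N_i$ does not exist as a function of $(n,D_1,\dots,D_N)$, and the resulting $D'=\binom{B+N-1}{N-1}\max_j D_j$ depends on the actual suprema of the $F_i$ rather than only on $N$ and the $D_i$; so as written the argument does not prove the uniformity asserted in the lemma (it only proves it with $D'$ depending additionally on $\max_i\sup F_i$).

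The repair is to replace ``the values of $F_i$ are bounded'' by ``the number of \emph{distinct} superlevel sets of $F_i$ is bounded'', which is exactly what the chain argument does give uniformly: each strict inclusion of varieties cut out in degree $\le D_i$ strictly decreases a lexicographic invariant (say the vector, indexed by dimension, of the numbers of irreducible components, each entry bounded via Bezout by $D_i^{n}$), so the chain of superlevel sets of $F_i$ contains at most $C_i=C(n,D_i)$ distinct sets. Then, for every $m$ simultaneously, your union over compositions involves at most $\prod_i C_i$ \emph{distinct} intersections $\bigcap_j (F_j)_{\ge i_j}$, each defined by equations of degree at most $\max_j D_j$, and taking one equation from each member of the union and multiplying gives defining equations of degree at most $\bigl(\prod_i C_i\bigr)\max_j D_j$ for $\bigl(\sum_i F_i\bigr)_{\ge m}$; this yields the lemma with a $D'$ depending only on $n$, $N$ and $D_1,\dots,D_N$, without any boundedness claim. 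Your closing observation is correct that in the paper's actual use (Corollary~\ref{cor:degf-complexity}) the summands $\degf_{V}$ are bounded by $\deg V$, so there your version suffices; but for the lemma as stated the uniform bound must come from the number of distinct superlevel sets, not from the size of the values.
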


% \begin{Lem}
%   Let $F_i,i=1\ldots,N$ be upper semicontinuous functions of complexity
%   bounded by $D_i$. Then the function $\sum_i F_i$ is an upper semicontinuous
%   function of complexity bounded by a constant $D'$ depending only
%   on $N$ and $D_1,\ldots,D_N$.
% \end{Lem}

\begin{Lem} \label{lem:sc-irr-var}
  If $V\subset M$ is an irreducible variety of degree bounded by $d$,
  then $\degf_V$ is an upper semicontinuous function of complexity
  bounded by $d$.
\end{Lem}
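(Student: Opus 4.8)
Proof plan for the final Lemma (if $V \subset M$ is irreducible of degree bounded by $d$, then $\degf_V$ is upper semicontinuous of complexity bounded by $d$).

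The plan is to unwind the definitions directly. Recall that $\degf_V(p) = \deg V$ if $p \in V$ and $0$ otherwise; since $V$ is irreducible, $\deg V$ is a single fixed integer, call it $e \le d$. So $\degf_V$ takes only the two values $0$ and $e$. To check upper semicontinuity I must verify that $(\degf_V)_{\ge n} = (\degf_V)^{-1}([n,\infty))$ is a closed variety cut out by equations of degree at most $d$, for every $n \in \N$. I would split into three cases according to where $n$ sits relative to $e$.

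First, if $n = 0$, then $(\degf_V)_{\ge 0} = M$, which is the zero set of the empty collection of equations (or, if one prefers, of the constant $0$), trivially a variety of complexity $0 \le d$. Second, if $1 \le n \le e$, then $\degf_V(p) \ge n$ exactly when $\degf_V(p) = e$, i.e. exactly when $p \in V$; hence $(\degf_V)_{\ge n} = V$, which by hypothesis is a variety cut out by equations of degree at most $d$ (being irreducible of degree $e \le d$, its ideal is generated in degrees $\le \deg V \le d$; on $\C^n$ or $(\C^*)^n$ this is standard, and it is exactly the content of "$V$ has degree bounded by $d$" as used throughout the paper). Third, if $n > e$, then no point attains value $\ge n$, so $(\degf_V)_{\ge n} = \emptyset$, which is cut out by the equation $1 = 0$, a constant of degree $0 \le d$. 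In all three cases $(\degf_V)_{\ge n}$ is a closed variety definable by equations of degree at most $d$, which is exactly the assertion that $\degf_V$ is upper semicontinuous of complexity bounded by $d$.

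The only point requiring the slightest care — and the one I would state explicitly rather than gloss — is the claim in the middle case that an irreducible variety of degree $e$ can be defined by equations of degree at most $e$. This is where "degree $\le d$" is being used with its intended meaning; it follows, for instance, from the fact that a generic linear projection to a subspace of one dimension higher exhibits $V$ as a hypersurface of degree $e$, together with the standard fact that the ideal of $V$ is generated by the degree-$\le e$ pieces one obtains by intersecting $V$ with generic linear spaces and pulling back — but for the purposes of this paper it is cleanest simply to take "degree bounded by $d$" to include the statement that the defining equations have degree $\le d$, which is the convention implicit in Section~\ref{sec:cycles} and in Definition of complexity. Everything else is immediate, so there is no real obstacle; the lemma is essentially a restatement of definitions.
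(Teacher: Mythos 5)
Your proof follows essentially the same route as the paper: the case analysis on $n$ is trivial (the superlevel sets are $M$, $V$, or $\emptyset$), and the entire content of the lemma is the fact that an irreducible variety of degree $e\le d$ can be \emph{set-theoretically} cut out by equations of degree at most $d$, which is exactly the standard fact the paper invokes. However, two points in your justification of that fact need repair. First, your parenthetical claim that the ideal of $V$ is \emph{generated} in degrees $\le\deg V$ is false in general for singular irreducible varieties (Castelnuovo--Mumford regularity is not bounded by the degree; this is the content of the counterexamples to the Eisenbud--Goto conjecture), and it is also unnecessary: the paper's notion of complexity only asks that the sets $F_{\ge n}$ be \emph{defined} (set-theoretically) by equations of degree $\le D$, and the set-theoretic statement is the true classical one, proved e.g.\ by taking, for each point $q\notin V$, a cone over $V$ from a generic linear center avoiding $q$, which is a hypersurface of degree $\le\deg V$ containing $V$ but not $q$. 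Second, your fallback of simply \emph{defining} ``degree bounded by $d$'' to mean that the defining equations have degree $\le d$ is not the paper's convention --- degree is defined there via intersection with generic linear spaces --- and adopting it would silently weaken Corollary~\ref{cor:degf-complexity}, since the cycles fed into it are bounded in degree in the intersection-theoretic sense. With the set-theoretic fact quoted correctly, your argument is complete and coincides with the paper's.
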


For the proof of the second lemma it suffices to recall the standard
fact that a variety of degree $d$ is cut out set-theoretically by
equations of degree bounded by $d$. Finally, we have the following
simple corollary.

\begin{Cor} \label{cor:degf-complexity} Let $C$ be an algebraic cycle
  (possibly of mixed dimension) of total degree bounded by $d$. Then
  $\degf_C$ is an upper semicontinuous function of complexity bounded
  by a number $D=D(d)$ depending only on $d$.
\end{Cor}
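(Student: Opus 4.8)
The plan is to reduce the statement directly to the two preceding lemmas by decomposing $C$ into its irreducible components. I would write $C=\sum_{i=1}^q n_i[V_i]$, where the $V_i$ are distinct irreducible subvarieties of $M$ and the $n_i$ are positive integers (recall that throughout this paper all cycles have positive coefficients). Since $\deg V_i\ge1$ for each $i$, the hypothesis $\sum_i n_i\deg V_i=\deg C\le d$ forces $q\le d$, each $n_i\le d$, and each $\deg V_i\le d$. This is the only place where positivity of the coefficients is used, and it is essential: it is what bounds both the number of components and the multiplicities in terms of $d$ alone.

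Next I would observe that each summand $n_i\degf_{V_i}$ is upper semicontinuous of complexity bounded by $d$. Indeed $\degf_{V_i}$ has this property by the second lemma, and for any positive integer $n$ one has $(n\,G)_{\ge m}=G_{\ge\lceil m/n\rceil}$ for an integer-valued function $G$, so scaling by a positive integer leaves the family of sublevel sets unchanged and in particular preserves the bound $d$ on the degree of the cutting equations. Alternatively one may simply regard $n_i\degf_{V_i}$ as a sum of $n_i$ copies of $\degf_{V_i}$ and invoke the first lemma.

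Finally, $\degf_C=\sum_{i=1}^q n_i\degf_{V_i}$ is a sum of at most $d$ upper semicontinuous functions, each of complexity bounded by $d$. By the first lemma this sum is upper semicontinuous of complexity bounded by a constant $D'$ depending only on the number of terms and the individual complexity bounds, all of which are controlled by $d$; one then sets $D:=D'$.

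There is no real obstacle here. The only points that require a moment's care are the two just indicated: that positivity of the coefficients of $C$ controls the combinatorial data $(q,n_1,\dots,n_q)$ in terms of $d$, and that the first lemma is applied with a number of summands that is itself bounded in terms of $d$, so that the resulting constant $D$ genuinely depends only on $d$ and not on the internal structure of the cycle $C$.
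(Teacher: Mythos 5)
Your proof is correct and follows essentially the same route as the paper: decompose $C$ into its irreducible components, use positivity of the coefficients to bound the number of summands and their degrees by $\deg C\le d$, and then combine the two preceding lemmas. The paper's own proof is just a terser version of this (writing $\degf_C$ as a sum of at most $\deg C$ functions $\degf_{V_i}$, each of complexity at most $\deg C$), so your extra care about the multiplicities $n_i$ and the count $q$ simply makes explicit what the paper leaves implicit.
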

\begin{proof}
  Indeed, $\degf_C$ is a sum of at most $\deg C$ upper semicontinuous
  functions, each of complexity and value bounded by $\deg C$
  according to Lemma~\ref{lem:sc-irr-var}. The statement then follows
  by Lemma~\ref{lem:sc-sum}.
\end{proof}

\newpage

\section{List of notations}
\label{sec:notations}

The following table lists some of the main notations used in this paper
along with a brief description and a reference for the definition where
applicable. \vspace{1cm}

\begin{tabular}{|c|p{0.5\textwidth}|c|}
  \hline
  Notation & Meaning & Definition \\ \hline

  $\mult_p^V P$ & Multiplicity of $P$ at $p$ in the direction of the vector field $V$ & \secref{sec:intro} \\
  $\Delta(P),\Delta(V)$ & Newton polytope of polynomial $P$ (resp. vector field $V$) & \secref{sec:bkt} \\
  $\Delta_x$ & Standard simplex in $x$-variables & \secref{sec:bkt} \\
  $\degf_C$ & Degree function of the cycle $C$ & Definition~\ref{def:degf} \\
  $i(Z;V\cdot W; M)$ & The multiplicity of $Z$ as a component of the intersection $V\cdot W$ & \secref{sec:cycles} \\
  $\deg C$ & The degree of the cycle $C$ & \secref{sec:cycles} \\
  $V(\cdots)$ & Mixed volume & \secref{sec:bkt} \\
  $\qmi_j(\Delta)$ & The $j$-th simplicial quermassintegral of $\Delta$ & Equation~\eqref{eq:qmi-def} \\
  $\las(x)$ & An affine space of codimension $k$, through the point $x$, in the direction specified
  by $\vell$ & Equation~\eqref{eq:las-def} \\
  $\pv(X)$ & The $k$-th polar variety associated to a deformation $X$ & Definition~\ref{def:polar} \\
  $\bpv(X^t),\tpv(X^r)$ & The refined polar variety (resp. its open dense subset) & Definition~\ref{eq:bpv-def} \\
  $\Sigma(X^r)$ & The (fiberwise) singular locus of $X^r$ & Equation~\eqref{eq:sigma-def} \\
  $b_k(\cdot)$ & The $k$-th Betti number & \\
  $F_p^r$ & The Milnor fiber of the family $X^r$ at $p$ & Definition~\ref{def:milnor-fiber} \\
  $\mc^k(P^\l)$ & The multiplicity cycle associated to the deformation $P^\l$ & Definition~\ref{def:mc} \\
  \hline
\end{tabular}

\newpage

\let\~=\tildeaccent
\bibliographystyle{plain}
\bibliography{refs}

\end{document}